\documentclass[abstract=true]{scrartcl}

\usepackage{amsmath}
\usepackage{amsthm}
\usepackage{amssymb}
\usepackage{amsfonts}
\usepackage{hyperref}
\usepackage{color}
\usepackage{dsfont}
\usepackage{enumitem}

\usepackage{yhmath} 


\usepackage{tikz-cd}
\tikzcdset{every label/.append style = {font = \large}}



\usepackage[numbers]{natbib}

\bibliographystyle{bibliographystyle} 



\newtheoremstyle{curs} 
  {\topsep}            
  {\topsep}            
  {\itshape}           
  {0pt}                
  {\bfseries\sffamily} 
  {.}                  
  { }                  
  {}                   
  
\newtheoremstyle{ncurs} 
  {\topsep}             
  {\topsep}             
  {}                    
  {0pt}                 
  {\bfseries\sffamily}  
  {.}                   
  { }                   
  {}                    

\theoremstyle{ncurs}
\newtheorem{dfn}{Definition}[section]
\newtheorem{rmk}[dfn]{Remark}
\newtheorem{exa}[dfn]{Example}
\newtheorem{as}[dfn]{Assumption}

\theoremstyle{curs}
\newtheorem{lem}[dfn]{Lemma}
\newtheorem{prop}[dfn]{Proposition}
\newtheorem{cor}[dfn]{Corollary}
\newtheorem{thm}[dfn]{Theorem}


\def\R{\mathbb{R}}   
\def\N{\mathbb{N}}   
\def\d{\,\mathrm{d}} 
\def\P{\mathbf{P}}   
\def\D{\mathbf{D}}   
\def\E{\mathbf{E}}   
\def\Rk{\mathcal{R}} 
\def\O{\mathcal{O}}  
\def\cN{\mathcal{N}} 
\def\e{\mathrm{e}}   
\def\Lls{L_{\mathrm{LS}}} 
\def\Lclass{L_{\mathrm{class}}}
\def\Lpin{L_{\mathrm{pin}}}
\def\Lals{L_{\mathrm{ALS}}} 
\def\Lhinge{L_{\mathrm{hinge}}}
 
\def\Cdim{C_{\mathrm{dim}}}

\newcommand{\norm}[1]{\Vert#1\Vert}

\newcommand{\id}{\mathrm{id}}

\newcommand{\supp}{\mathrm{supp}}
\newcommand{\dist}{\mathrm{dist}}

\DeclareMathOperator*{\argmin}{arg\,min}

\makeatletter
\def\namedlabel#1#2{\begingroup
    #2%
    \def\@currentlabel{#2}%
    \phantomsection\label{#1}\endgroup
}
\makeatother


\title{Adaptive Learning Rates for Support Vector Machines Working on Data with Low Intrinsic Dimension}
\author{Thomas Hamm and Ingo Steinwart}
\date {\today}


\begin{document}
\maketitle
\begin{center}
Institute for Stochastics and Applications\\
Faculty 8: Mathematics and Physics\\
University of Stuttgart\\
D-70569 Stuttgart Germany\\
\texttt{\{\href{mailto:thomas.hamm@mathematik.uni-stuttgart.de}{thomas.hamm}, \href{mailto:ingo.steinwart@mathematik.uni-stuttgart.de}{ingo.steinwart}\}@mathematik.uni-stuttgart.de}
\end{center}
\begin{abstract}
We derive improved regression and classification rates for support vector machines using Gaussian kernels under the assumption that the data has some low-dimensional intrinsic structure that is described by the box-counting dimension. Under some standard regularity assumptions for regression and classification we prove learning rates, in which the dimension of the ambient space is replaced by the box-counting dimension of the support of the data generating distribution. In the regression case our rates are in some cases minimax optimal up to logarithmic factors, whereas in the classification case our rates are minimax optimal up to logarithmic factors in a certain range of our assumptions and otherwise of the form of the best known rates. Furthermore, we show that a training validation approach for choosing the hyperparameters of an SVM in a data dependent way achieves the same rates adaptively, that is without any knowledge on the data generating distribution. 
\par\vskip\baselineskip\noindent
{\bfseries\sffamily{Keywords:}} curse of dimensionality, support vector machines, learning rates, regression, classification
\end{abstract}

\section{Introduction}
Theoretical bounds on rates of convergence of nonparametric methods for regression and classification are known to be heavily influenced by the dimension of the input space, a phenomenon usually called the curse of dimensionality in statistical learning theory. Therefore, a considerable amount of effort has been put into coming up with notions of intrinsic dimension of data with the goal to improve the dependence on the dimension of the ambient space of some well-established learning rates. The apparent aim of this idea is to prove much faster rates of convergence under the assumption that the gap between the intrinsic dimension of the data and the dimension of its ambient space is large. It is safe to say, that this assumption also has a practical relevance for real life datasets considering the amount of techniques developed for dimensionality reduction. It is therefore interesting to theoretically analyze the performance of common learning methods under an assumption of low intrinsic dimension of the data.

The probably widest-spread notion to describe the intrinsic dimension of data is to assume that the data generating distribution is supported on some low-dimensional smooth submanifold $X\subset\R^d$, e.g.
\cite{BickelLi_LocalPolynomialRegressuinOnUnknownManifolds,
ScottNowak_MinimaxOptimalClassificationWithDyadicDecisionTrees,
YangDunson_BayesianManifoldRegression,
YeZhou_LearningAndApproximationByGaussiansOnRiemannianManifolds,
YeZhou_SVMLearningAndLpApproximationByGaussianOnRiemannianManifolds}.
Our notion of intrinsic dimension is instead based on the box-counting dimension of the support of the data generating distribution, which considerably generalizes the manifold assumption. A related concept of dimensionality is the so-called doubling dimension, e.g. employed in
\cite{KpotufeDasgupta_ATreeBasedRegressorThatAdaptsToIntrinsicDimension}
for tree-based regressors. While in view of regularity constraints on the input space, the doubling dimension is comparable with the box-counting dimension, its great disadvantage however is, that the value of the doubling dimension is not consistent with classical notions of dimension. So far, existing results mainly cover rather basic learning algorithms under some of the notions for low intrinsic dimension, for example local polynomial regression \cite{BickelLi_LocalPolynomialRegressuinOnUnknownManifolds}, $k$-nearest neighbor \cite{Kpotufe_kNNRegressionAdaptsToLocalIntrinsicDimension, KulkarniPosner_RatesOfConvergenceOfNearestNeighborEstimationUnderArbitrarySampling}, Nadaraya--Watson kernel regression \cite{KpotufeGarg_AdaptivityToLocalSmoothnesAndDimensionInKernelRegression}, or decision trees \cite{ScottNowak_MinimaxOptimalClassificationWithDyadicDecisionTrees}.

In this work we derive least-squares regression and classification rates for Gaussian SVMs under the assumption that the marginal of the data generating distribution on the input space $X\subset \R^d$ is supported on a set of upper box-counting dimension $\varrho\in(0,d]$ and under some standard regularity assumptions. In the regression case we assume that the target function is $\alpha$-H\"older regular for some $\alpha\in(0,\infty)$, in the context of statistical learning theory often denoted $(p,C)$-smooth in the sense of \cite[Definition 3.3]{GyorfiKohlerKrzyzakWalk_ADistributionFreeTheoryOfNonparametricRegression}. The notion of $\alpha$-H\"older regularity is described in Proposition \ref{prop:finite_Besov_norm}, for a general introduction to H\"older spaces see also \cite[Section 1.29]{AdamsFournier_SobolevSpaces}. In this setting we derive a rate of convergence of order $n^{-2\alpha/(2\alpha+\varrho)}$ up to a logarithmic factor. That is, we reproduce in some cases the well-known minimax optimal rate \cite{Stone_GlobalOptimalRatesOfConvergenceForNonparametricRegression} up to a logarithmic factor. To the best of our knowledge the only result on regression rates for SVMs under a low intrinsic dimension assumption is given in \cite{YeZhou_LearningAndApproximationByGaussiansOnRiemannianManifolds}, where the authors derive under the assumption that $X$ is a compact smooth $\varrho$-dimensional manifold and that the target function is $\alpha$-H\"older continuous for $\alpha\in(0,1]$ the learning rate $(\log^2(n)/n)^{\alpha/(8\alpha+4\varrho)}$. That is, we significantly improve the existing rates under much more general assumptions. For classification our regularity assumptions are stated in terms of margin/noise conditions. We again prove minimax optimal learning rates up to logarithmic factors for a certain range of our assumptions and otherwise reproduce the best known rates up to a logarithmic factor. As for regression, the only existing result \cite{YeZhou_SVMLearningAndLpApproximationByGaussianOnRiemannianManifolds} on Gaussian SVMs for classification is based on the assumption, that $X$ is some compact smooth manifold without boundary. However, their regularity assumptions are very restrictive as they only consider features generated by the normalized surface measure and impose a Besov-smoothness assumption on the discrete valued decision function. We discuss these issues in more detail in a comparison after Theorem \ref{thm:class_adaptive_rates}.

As the application of a Gaussian SVM requires the choice of a regularization parameter $\lambda$ and a bandwidth parameter $\gamma$, for which the asymptotically optimal choices depend on unknown characteristics of the data generating distribution, we also consider a training validation approach for choosing the hyperparameters adaptively in a data dependent way. We show, that the resulting learning method achieves the same rates for regression and classification without knowledge on the data generating distribution. Therefore, the overall conclusions of this paper can be summarized as follows: Standard, unmodified Gaussian SVMs using a training validation scheme for hyperparameter selection are, in terms of their generalization performance, adaptive to the intrinsic dimension of the data and the regularity of the target function.

Additionally, as an overall contribution, we refine the results for adaptive SVM learning rates by showing that the size of the sets of candidate values for the hyperparameters only need to grow logarithmically in the sample size instead of linear, as for example in \cite[Theorem 7.24]{SteinwartChristmannSVMs}. This gives a significant improvement on the time complexity for attaining the same rates adaptively, since this only adds a factor $\log n$ to the time complexity, instead of a factor of $n$.

The rest of this paper is organized as follows: In Section \ref{sec:preliminaries} we give a precise description of the setting we consider. The Sections \ref{sec:learning_rates_regression} and \ref{sec:learning_rates_classification} contain our main results for regression and classification, respectively. Sections \ref{sec:learning_rates_regression} and \ref{sec:learning_rates_classification} are each concluded with a comparison to existing results on learning rates under an assumption of low intrinsic dimensional data, which is comparable to ours. All proofs are deferred to the Appendix \ref{sec:appendix} including some auxiliary results on entropy numbers for Gaussian RKHSs and an oracle inequality for Gaussian SVMs for general loss functions.

\section{Preliminaries}\label{sec:preliminaries}
Let $\P$ be a distribution on $X\times Y$, where $X\subset\R^d$ is the input space and $Y\subset \R$ is the output space. We denote the marginal distribution of $\P$ on $X$ by $\P_X$. One goal of nonparametric statistics is to learn a functional relationship between $X$ and $Y$. That is, given a dataset $D=((x_1,y_1),\ldots,(x_n,y_n))\in(X\times Y)^n$ sampled from $\P^n$ we want to find a decision function $f_D:X\rightarrow Y$ such that $f_D(x)$ is a good prediction for $y$ given $x$ with high probability. To assess the quality of a decision function we fix a loss function, i.e. a measurable function $L:Y\times \R\rightarrow[0,\infty)$. In this work, we will consider the following three loss functions: The least-squares loss $\Lls(y,t):=(y-t)^2$, the classification loss $\Lclass(y,t):=\mathbf{1}_{(-\infty,0]}(y\,\mathrm{sgn}\,t)$ and because of the non-convexity\footnote{We call a loss function $L:Y\times\R\rightarrow[0,\infty)$ convex if $L(y,\cdot)$ is convex for all $y\in Y$.} of $\Lclass$, the hinge loss $\Lhinge(y,t):=\max\{0,1-yt\}$ as a surrogate. Finally, we measure the quality of a decision function by its risk
\begin{align*}
\Rk_{L,\P}(f_D):=\int_{X\times Y} L(y,f_D(x))\d\P(x,y).
\end{align*}
We further call the minimum possible risk, denoted by $\Rk_{L,\P}^*:=\inf_{f:X\rightarrow\R} \Rk_{L,\P}(f)$, the Bayes risk and a function $f_{L,\P}^*$ satisfying $\Rk_{L,\P}(f_{L,\P}^*)=\Rk_{L,\P}^*$ a Bayes decision function w.r.t. $L$ and $\P$.


Our notion of the intrinsic dimension of the data is based on the upper box-counting dimension of the support of the marginal distribution $\P_X$. To this end, recall that the support of a Borel measure $\mu$ on a subset of $\R^d$, denoted by $\supp\,\mu$, is the complement of the largest open $\mu$-zero set. To introduce the (upper) box-counting dimension we require the concept of covering numbers.
\begin{dfn}
Given a normed space $E$ and a subset $A\subset E$ we say that the points $x_1,\ldots,x_m\in E$ are an $\varepsilon$-net of $A$, if
\begin{align*}
A\subset \bigcup_{j=1}^m (x_j+\varepsilon B_E),
\end{align*}
where $B_E$ is the closed unit ball in $E$. Given an $\varepsilon>0$ the covering number $\cN_E\left(A,\varepsilon \right)$ of $A$ is defined as the minimum cardinality of an $\varepsilon$-net of $A.$ We may also write $\cN(A,\varepsilon):=\cN_E(A,\varepsilon)$, if the ambient space $E$ is known from the context. Finally, given a second normed space $F$ and a bounded, linear operator $T:E\rightarrow F$, the covering numbers of $T$ are defined by $\cN(T,\varepsilon):=\cN_F(TB_E,\varepsilon)$.
\end{dfn}
We will consider covering numbers of subsets of $\R^d$. To this end, we denote the space $\R^d$ equipped with the usual $p$-norm $\norm{\cdot}_p$ for $p\in[1,\infty]$ by $\ell_p^d$. Whenever we write $\norm{x}$ for some $x\in\R^d$ without subscript, the norm is understood to be the usual Euclidean norm. Further note that we use the symbol $\N=\{1,2,3,\ldots\}$ to denote the set of natural numbers without $0$ and $\N_0=\{0\}\cup\N$. Now, our central assumption to describe the intrinsic dimension of the data in this paper reads as follows:
\begin{center}
\begin{minipage}{0.8\textwidth}
\begin{description}
\item[\namedlabel{ass:boxdim_X}{{\textsc(DIM)}}] There exist constants $\Cdim>0$ and $\varrho>0$ such that for all $\varepsilon\in(0,1)$ we have
\begin{align*}
\cN_{\ell_\infty^d}(\supp\,\P_X,\varepsilon)\leq \Cdim\varepsilon^{-\varrho}.
\end{align*}
We further say that this assumption is satisfied exactly, if the opposite inequality is fulfilled for the same $\varrho$ and a smaller constant $c$.
\end{description}
\end{minipage}
\end{center}
Note, that \ref{ass:boxdim_X} implies that $\supp\,\P_X$ is bounded. The infimum over all $\varrho$, such that \ref{ass:boxdim_X} is fulfilled for $\varrho$ and some finite constant $\Cdim$ coincides with the so-called upper box-counting dimension of $\supp\,\P_X$, which is defined as
\begin{align}
\limsup_{\varepsilon\rightarrow 0}\dfrac{\log\cN_{\ell_\infty^d}(\supp\,\P_X,\varepsilon) }{\log \frac{1}{\varepsilon}}, \label{eqn:upper_box_counting_dim}
\end{align}
cf. \cite[Section 3.1]{Falconer_FractalGeometry}. Analogously, the lower box-counting dimension is defined by substituting $\limsup$ with $\liminf$ in (\ref{eqn:upper_box_counting_dim}) and in case those values coincide, this common limit is the box-counting dimension of $\supp\,\P_X$. Also note, that we can consider in assumption \ref{ass:boxdim_X} the covering numbers with respect to the $\ell_p^d$-norm for any $p\in[1,\infty)$, as a change of the norm will only influence the constant $\Cdim$, but not $\varrho$. One strength of assumption \ref{ass:boxdim_X} is, that the upper box-counting dimension is defined for \textit{any} bounded subset $X\subset \R^d$ and is at most $d$, that is, we basically impose no regularity in terms of smoothness on $X$ at all, in contrast to the wide-spread manifold assumption in the literature. The reason we formulate our assumption by \ref{ass:boxdim_X}, instead in terms of the upper box-counting dimension only, is that the constant $\Cdim$ influences the constants in our learning rates and we wish to track this dependence. To get some intuition on this assumption we give a few examples. For these examples assume that $\supp\,\P_X=X$.
\begin{exa}\label{exa:boxdim_cube}
Let $X=[-1,1]^d$. By a simple argument we see, that for $\varepsilon=1/m$, where $m\in\N$, we have $\cN_{\ell_\infty^d}(X,\varepsilon)=\varepsilon^{-d}$. Moreover, by \cite[Proposition 1.3.1]{CarlStephani_EntropyCompactness} there exist constants $c,C>0$, such that
\begin{align*}
c\varepsilon^{-d}\leq\cN_{\ell_\infty^d}(X,\varepsilon)\leq C\varepsilon^{-d}
\end{align*}
for all $\varepsilon\in(0,1)$. That is, \ref{ass:boxdim_X} is fulfilled exactly for $\varrho=d$. More generally we have for any bounded $X\subset \R^d$ with non-empty interior that \ref{ass:boxdim_X} is fulfilled exactly for $\varrho=d$.
\end{exa}
\begin{exa}
Let $X\subset\R^d$ be a bounded $d'$-dimensional differentiable manifold. Then \ref{ass:boxdim_X} is fulfilled for $\varrho=d'$. This follows from Example \ref{exa:boxdim_cube} and the fact, that the box-counting dimension is invariant under bi-Lipschitzian maps, cf.~\cite[Section 3.2]{Falconer_FractalGeometry}. Our assumption \ref{ass:boxdim_X} therefore includes the manifold assumption commonly used in the literature.
\end{exa}
\begin{exa}\label{exa:attractor}
The \textit{attractor} of a dynamical system is, loosely speaking, a set in the phase space of the dynamical system to which it tends to converge to, based on the initial conditions \cite{Milnor_OnTheConceptOfAttractor}. It is not unusual for such attractors of dynamical systems describing physical systems to exhibit a fractal structure, whose complexity is, amongst others, measured by their box-counting dimension \cite{FarmerOtt_TheDimensionOfChaoticAttractors}. A famous example is given by the Lorenz attractor associated to the dynamical system
\begin{align*}
x'=&-\sigma x+\sigma y \\
y'=& -xy+rx-y \\
z'=& xy-bz
\end{align*}
for real parameters $\sigma,r,b$, and was originally used to describe atmospherical convection, see \cite{Ruelle_StrangeAttractors} as well as for other examples. The Lorenz attractor is estimated to have a box-counting dimension of approximately $1.98$ for certain values of $\sigma, r, b$, see \cite{McGuinness_TheFractalDimensionOfTheLorenzAttractor}. This shows, that our assumptions allowing for non-integer dimensions is not only a mathematical quirk, but is also relevant for real-world datasets.  Suppose, for example, the feature vectors $x_i$ of the dataset $D$ are generated by observing the state of such a dynamical systems at independent, random time steps. To give a concrete example, following the discussion above, it is reasonable to assume that a dataset containing meteorological data has some low-dimensional intrinsic fractal structure, as in \cite{DeVito_AirPollution} where the authors propose an approach for estimating air pollution based on meteorological and pollution data from distant sensor stations. For further examples of fractal structures in mathematical models in physics, chemistry, and finance we refer to \cite[Chapter 18]{Falconer_FractalGeometry}.
\end{exa}
A concept related to the box-counting dimension, but with far less favorable properties is the doubling dimension. Recall that the doubling dimension of a set $X\subset \R^d$, considered as a metric space, is the smallest constant $c$, such that every ball of radius $r>0$ in $X$ can be covered by $2^c$ balls of radius $r/2$. It is used, for example, in \cite{KpotufeDasgupta_ATreeBasedRegressorThatAdaptsToIntrinsicDimension} to describe the intrinsic dimension of the data. We quickly sketch, that the box-counting dimension never exceeds the doubling dimension. To this end, let $X\subset\R^d$ be a bounded set with doubling dimension $c$. For simplicity, let us assume that $X\subset B_{\ell_2^d}$, that is $\cN_{\ell_2^2}(X,1)= 1$. By assumption $X$ can be covered by $2^c$ balls of radius $1/2$ which in turn can each be covered by $2^c$ balls of radius $1/4$. Inductively this gives us $\cN(X,1/2^k)\leq 2^{ck}$ for all $k\in\N_0$. A simple argument then gives us $\cN(X,\varepsilon)\leq 2^c\varepsilon^{-c}$ for all $\varepsilon\in(0,1)$, and hence we have $c\geq\varrho$, whenever \ref{ass:boxdim_X} is fulfilled exactly for $\varrho$. In fact, we often have $c>\varrho$. For example, as a consequence of \cite[Satz 2]{Bezdek_UeberEinigeKreisueberdeckungen} the optimal value for the doubling dimension of the unit disc $B_{\ell_2^2}$ is given by $\log_2 7\approx2.81$, while its box-counting dimension is 2. Doubling dimensions of sets, that actually have a standard notion of dimension, can rarely be computed explicitly. For example, a $\varrho$-dimensional manifold has doubling dimension $\O(\varrho)$, where the proportionality constant depends on the curvature of $X$, see e.g.~\cite[Theorem 22]{DasguptaFreund_RandomProjectionTreesAndLowDimensionalManifolds}.

Finally, we introduce our learning algorithm of interest. For a subset $X\subset \R^d$ we denote the Gaussian RKHS of width $\gamma>0$ by $H_\gamma(X)$, that is the RKHS of the kernel $k_\gamma(x,y)=\exp(-\gamma^{-2}\norm{x-y}^2)$ for $x,y\in X$.\footnote{We give a brief introduction to kernels and RKHSs in the Appendix after Corollary \ref{cor:averaged_entropy_numbers}.} Recall, that a support vector machine (SVM) with Gaussian kernel solves the regularized empirical risk minimization problem
\begin{align}
f_{D,\lambda,\gamma}:=\argmin_{f\in H_\gamma(X)} \lambda \norm{f}_{H_\gamma(X)}^2+\frac{1}{n}\sum_{i=1}^n L(y_i,f(x_i)).\label{eqn:def_SVM}
\end{align}
Note that for convex loss functions $L$ by the representer theorem \cite[Theorem 5.5]{SteinwartChristmannSVMs}, $f_{D,\lambda,\gamma}$ exists and is unique and has the expansion
\begin{align*}
f_{D,\lambda,\gamma}(x)=\sum_{i=1}^n \alpha_i k_\gamma(x,x_i),
\end{align*}
which makes the nonparametric optimization problem (\ref{eqn:def_SVM}) computable. The learning method defined by (\ref{eqn:def_SVM}) using the least-squares loss $L=\Lls$ is in the literature often called kernel ridge regression, whereas the name support vector machine is by many authors exclusively used for the learning method (\ref{eqn:def_SVM}) using the hinge loss $L=\Lhinge$.

To deal with some technical difficulties arising from the unboundedness of many loss functions, we also need to introduce the clipping operation. To this end, we say that a loss function $L$ can be clipped at some value $M>0$, if $L(y,\wideparen{t})\leq L(y,t)$ for all $y\in Y$ and $t\in\R$, where
\begin{align*}
\wideparen{t}:=
\begin{cases}
-M &\text{ for } t<-M,\\
t &\text{ for } t\in[-M,M],\\
M &\text{ for } t>M.
\end{cases}
\end{align*}
is the clipped value of $t$ at $M$, see \cite[Definition 2.22]{SteinwartChristmannSVMs}. Our final predictor is then $\wideparen{f}_{D,\lambda,\gamma}$, where the clipping operation is applied pointwise at each $x\in X$ to $f_{D,\lambda,\gamma}(x)$. If $Y\subset[-M,M]$ for some $M>0$, the least-squares loss $\Lls$ can be clipped at $M$. In the scenario of classification, our output space is given by $\{-1,1\}$. In this case the hinge loss can be clipped at 1. In Sections \ref{sec:learning_rates_regression} and \ref{sec:learning_rates_classification} let $\,\wideparen{\cdot}\,$ denote the clipping operation at the appropriate value.

The definition of $\wideparen{f}_{D,\lambda,\gamma}$ involves a choice for the regularization parameter $\lambda$ and the width $\gamma$. Usually, the asymptotically optimal choices for $\lambda$ and $\gamma$ depend on properties of the unknown distribution $\P$. To tackle this problem, we also introduce a training validation approach, that chooses (approximately) optimal hyper-parameters $\lambda$ and $\gamma$ adaptively. To this end, we split our dataset $D=((x_1,y_1),\ldots,(x_n,y_n))\in (X\times Y)^n$ into a training set $D_1:=((x_1,y_1),\ldots,(x_m,y_m))$ and a validation set $D_2:=((x_{m+1},y_{m+1}),\ldots,(x_n,y_n))$, where $m:=\lfloor n/2\rfloor+1$. Furthermore, we fix finite sets of candidate values $\Lambda_n\subset(0,1]$ and $\Gamma_n\subset (0,1]$ for $\lambda$ and $\gamma$. In the first step we then compute the SVM solutions $f_{D_1,\lambda,\gamma}$ for all $(\lambda,\gamma)\in\Lambda_n\times\Gamma_n$. In the second step choose the parameters $(\lambda_{D_2},\gamma_{D_2})$ that have the smallest empirical error on the validation set, that is
\begin{align*}
\sum_{i=m+1}^n L\left(y_i,\wideparen{f}_{D_1,\lambda_{D_2},\gamma_{D_2}}(x_i)\right)=\min_{(\lambda,\gamma)\in\Lambda_n\times\Gamma_n}\sum_{i=m+1}^n L\left( y_i,\wideparen{f}_{D_1,\lambda,\gamma}(x_i) \right).
\end{align*}
Our final estimator is then $\wideparen{f}_{D_1,\lambda_{D_2},\gamma_{D_2}}$ and we call this learning method a training validation support vector machine (TV-SVM).

\section{Learning Rates for Regression}\label{sec:learning_rates_regression}
In this section we derive learning rates for the least-squares loss function $L=\Lls$ under suitable smoothness assumptions on $f_{L,\P}^*$. First of all, recall that for the least-squares loss, the Bayes decision function is given by the conditional mean function $f_{L,\P}^*(x)=\E(Y|X=x)$, see e.g.~\cite[Section 1.1]{GyorfiKohlerKrzyzakWalk_ADistributionFreeTheoryOfNonparametricRegression}. We begin by introducing some tools for our notion of smoothness.

For a function $f:\R^d\rightarrow\R$ and $h\in\R^d$ the difference operator $\Delta_h$ is defined by $\Delta_hf(x):=f(x+h)-f(x)$. The $s$-fold application of $\Delta_h$ has the explicit expansion
\begin{align}
\Delta_h^s f(x)=\sum_{j=0}^s \binom{s}{j} (-1)^{s-j} f(x+jh).\label{eqn:difference_operator}
\end{align}
Given a measure $\mu$ on $X\subset \R^d$ we further define the $s$-th modulus of smoothness by
\begin{align}
\omega_{s,L_2(\mu)}(f,t):=\sup_{\norm{h}\leq t} \norm{\Delta_h^sf}_{L_2(\mu)}.\label{eqn:modulus_of_smoothness}
\end{align}
Finally, given an $\alpha>0$ we set $s:=\lfloor\alpha\rfloor+1$ and define the semi-norm
\begin{align}
|f|_{B_{2,\infty}^\alpha(\mu)}:=\sup_{t>0} \,t^{-\alpha}\omega_{s,L_2(\mu)}(f,t). \label{eqn:Besov_semi-norm}
\end{align}
\begin{rmk}
The so-called Besov spaces $B_{2,\infty}^\alpha(\R^d) $, commonly used in approximation theory, can be defined by means of real interpolation \cite[Chapter 3]{BerghLoefstroem_InterpolationSpaces} of Sobolev spaces, more precisely for $k_0\in\N_0$ and $k_1\in\N_0$ with $k_0\neq k_1$ and $\theta\in(0,1)$ such that $\alpha=k_0(1-\theta)+k_1\theta$
\begin{align*}
B_{2,\infty}^\alpha(\R^d):=\left(W^{k_0,2}(\R^d),W^{k_1,2}(\R^d)\right)_{\theta,\infty},
\end{align*}
 cf. \cite[Section 1.6.4]{Triebel_TheoryOfFunctionSpacesII}. Here, $W^{k,p}(\R^d)$ denotes the Sobolev space of functions on $\R^d$ that have $p$-integrable weak derivatives up to order $k$ with $p\in[1,\infty]$ and $k\in\N_0$. Interpolation theory allows one to \textit{fill the gaps} of the discrete range of smoothness of Sobolev spaces, just like the H\"older spaces fill the gaps between the spaces of $k$-times differentiable functions. In this sense, one can think of Besov spaces as Sobolev spaces with a continuous range of smoothness. For $\alpha>d/2$ we can define an equivalent norm on $B_{2,\infty}^\alpha(\R^d)$ by
\begin{align*}
\norm{\cdot}_{L_2(\R^d)}+|\cdot|_{B_{2,\infty}^\alpha(\R^d)},
\end{align*} 
where $|\cdot|_{B_{2,\infty}^\alpha(\R^d)} $ is defined with respect to the Lebesgue measure on $\R^d$ in (\ref{eqn:modulus_of_smoothness}) and (\ref{eqn:Besov_semi-norm}), cf. \cite[Section 2.6.1]{Triebel_TheoryOfFunctionSpacesII}, explaining our notation. Furthermore, we see that $|f|_{B_{2,\infty}^\alpha(\P_X)}<\infty$, whenever $f\in B_{2,\infty}^\alpha(\R^d)$ and $\P_X$ has a bounded Lebesgue-density.
\end{rmk}
In light of the remark above, we would like to have an explicit description of functions $f$ with $|f|_{B_{2,\infty}^\alpha(\P_X)}<\infty$ also for the case $\varrho<d$. The following proposition gives a sufficient condition and shows that this class of functions is indeed very rich. To formulate it, we first introduce some additional tools.

Given a function $f:\R^d\rightarrow\R$ we define, for a subset $\Omega\subset \R^d$ and $0<\beta\leq 1$, the H\"older seminorm by
\begin{align}
|f|_{\beta,\Omega}:=\sup_{\substack{x,y\in\Omega\\x\neq y}}\dfrac{|f(x)-f(y)|}{\norm{x-y}^\beta}. \label{eqn:Holder_quasinorm}
\end{align}
If $f$ is $k$-times continuously differentiable we denote for a multi-index $\nu=(\nu_1,\ldots,\nu_d)\in\N_0^d$ with $|\nu|:=\nu_1+\ldots+\nu_d=k$ the higher-order partial derivative
\begin{align*}
\partial^\nu f(x)= \frac{\partial^{|\nu|}f}{\partial x_1^{\nu_1}\ldots \partial x_d^{\nu_d}}(x).
\end{align*}
Furthermore, let $X^{+\delta}:= \{x\in\R^d:\dist(x,X)<\delta \}$ be the open $\delta$-neighborhood of $X$, where $\dist(x,X):=\inf_{y\in X}\norm{x-y}$.
\begin{prop}\label{prop:finite_Besov_norm}
Let $X\subset \R^d$ be a non-empty set and for $\alpha>0$ let $f:\R^d\rightarrow\R$ be a bounded function that is $\lfloor \alpha\rfloor$-times continuously differentiable on $X^{+\delta}$ for some $\delta>0$. Additionally, assume that one of the following conditions is satisfied:
\begin{enumerate}
\item $\alpha$ is an integer and $\partial^\nu f$ is bounded on $X^{+\delta}$ for all $|\nu|=\alpha$.
\item $\alpha$ is not an integer and for $\beta=\alpha-\lfloor\alpha\rfloor$ and all $|\nu|=\lfloor \alpha\rfloor$ we have $|\partial^\nu f|_{\beta,X^{+\delta}}<\infty$.
\end{enumerate}
Then $|f|_{B_{2,\infty}^\alpha(\mu)}<\infty$ for every measure $\mu$ with $\supp\,\mu\subset X$.
\end{prop}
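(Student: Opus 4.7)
The plan is to bound $t^{-\alpha}\omega_{s,L_2(\mu)}(f,t)$ separately in two regimes, with threshold $t_0:=\delta/s$. For $t\ge t_0$, the expansion (\ref{eqn:difference_operator}) and the assumed boundedness of $f$ give $\Norm{\Delta_h^s f}_\infty\le 2^s\norm{f}_\infty$, and hence
$$t^{-\alpha}\omega_{s,L_2(\mu)}(f,t)\le t_0^{-\alpha}\,2^s\norm{f}_\infty\,\mu(\R^d)^{1/2},$$
which is finite under the implicit assumption that $\mu$ is finite (as in the intended applications, where $\mu=\P_X$ and $\supp\P_X$ is bounded).

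For $t<t_0$, every $x+jh$ with $x\in\supp\mu\subset X$ and $j=0,\ldots,s$ lies in $X^{+\delta}$, so the smoothness hypothesis applies. The key classical fact to exploit is that the operator expressed in the form (\ref{eqn:difference_operator}) is an $s$-th order finite difference in the index $j$ and therefore annihilates any expression that is polynomial in $j$ of degree less than $s$. Taylor-expanding $f(x+jh)$ around $x$ via the mean-value form of the remainder (which matches the regularity $f\in C^{\lfloor\alpha\rfloor}(X^{+\delta})$) decomposes $f(x+jh)=P_j(x,h)+R_j(x,h)$ where $P_j$ depends polynomially on $j$ of degree at most $s-1$ and so drops out of $\Delta_h^s f(x)$.

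In case (i) I take the Taylor polynomial of degree $\alpha-1=s-2$ in $v=jh$; the remainder is $R_j=\sum_{|\nu|=\alpha}\frac{(jh)^\nu}{\nu!}\partial^\nu f(\xi_j)$ with $\xi_j$ on the segment from $x$ to $x+jh$ (hence $\xi_j\in X^{+\delta}$), so the assumed boundedness of $\partial^\nu f$ on $X^{+\delta}$ yields $|R_j|\le Cj^\alpha\norm{h}^\alpha$. In case (ii) I take the Taylor polynomial of degree $\lfloor\alpha\rfloor=s-1$ and rewrite the remainder as $R_j=\sum_{|\nu|=\lfloor\alpha\rfloor}\frac{(jh)^\nu}{\nu!}\bigl(\partial^\nu f(\xi_j)-\partial^\nu f(x)\bigr)$; the Hölder assumption then gives $|R_j|\le C\,j^{\lfloor\alpha\rfloor}\norm{h}^{\lfloor\alpha\rfloor}(j\norm{h})^\beta=Cj^\alpha\norm{h}^\alpha$. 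Summing over $j$ with the binomial weights produces $|\Delta_h^s f(x)|\le C'\norm{h}^\alpha\le C't^\alpha$, so integrating against $\mu$ and taking square roots yields $t^{-\alpha}\omega_{s,L_2(\mu)}(f,t)\le C'\mu(\R^d)^{1/2}$ on $(0,t_0)$. Together with the large-$t$ bound this gives $|f|_{B_{2,\infty}^\alpha(\mu)}<\infty$.

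The main obstacle is really just bookkeeping: one must verify that all the points at which smoothness is invoked — namely $x+jh$ and the intermediate mean-value points $\xi_j$ — actually lie in $X^{+\delta}$, which is precisely what the choice $t_0=\delta/s$ guarantees since then $\norm{jh}\le s\norm{h}<\delta$ and the whole segment $[x,x+jh]$ stays within distance $\delta$ of $x\in X$. The edge case $\lfloor\alpha\rfloor=0$ in (ii), where no Taylor argument is available, is handled directly by applying the Hölder hypothesis to $f$ itself, giving $|\Delta_h f(x)|=|f(x+h)-f(x)|\le|f|_{\alpha,X^{+\delta}}\norm{h}^\alpha$.
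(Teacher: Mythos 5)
Your proof is correct, but it runs along a genuinely different track than the paper's. The paper reduces to the univariate function $F(t)=f(x+th)$ and invokes an iterated mean value theorem for difference operators (its Lemma on $\Delta_h^s$) to write $\Delta_h^sf(x)=\Delta_1F^{(s-1)}(\xi)$ for a single intermediate $\xi$, and then applies the boundedness/H\"older hypothesis exactly once to the pair $F^{(s-1)}(\xi+1),F^{(s-1)}(\xi)$, yielding the constant $d^{(s-1)/2}\max_{|\nu|=s-1}|\partial^\nu f|_{\beta,X^{+\delta}}$ (resp.\ twice the sup-norm bound in the integer case). You instead Taylor-expand each term $f(x+jh)$ around $x$ with mean-value remainder and use the classical identity that the $s$-th finite difference in $j$ annihilates polynomials in $j$ of degree less than $s$, so only the remainders $R_j$ survive, each of size $O(j^\alpha\norm{h}^\alpha)$; summing the binomial weights gives the same $\norm{\Delta_h^sf}_{L_\infty(\supp\,\mu)}\lesssim\norm{h}^\alpha$ bound, and your large-$t$ regime and the domain bookkeeping ($\norm{jh}\le s\norm{h}<\delta$, segments staying in $X^{+\delta}$) match the paper's. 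What each route buys: the paper's version is a one-dimensional reduction with a tidy single application of the H\"older condition, at the price of the auxiliary finite-difference MVT lemma; yours avoids that lemma entirely, using only Taylor's theorem plus the polynomial-annihilation identity, at the cost of slightly larger constants (a factor like $\sum_{j}\binom{s}{j}j^\alpha$) and per-$j$ bookkeeping. Your explicit treatment of the edge case $\lfloor\alpha\rfloor=0$ and your remark that finiteness of $\mu$ is implicitly needed are both fine and, if anything, a bit more careful than the paper, whose proof uses $\mu(X)<\infty$ tacitly and whose threshold $\delta/(s-1)$ degenerates when $s=1$.
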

In the case where $X$ has a differentiable structure it is more desirable to express our regularity assumptions on $f_{L,\P}^*$ by this intrinsic structure of $X$. Under our general assumption \ref{ass:boxdim_X} this is not possible, or at least there is no obvious way to do so. However, if $X$ is a compact $C^k$-manifold we can define the class $C^k(X)$, see \cite[Chapter 2]{Spivak_AComprehensiveIntroductionToDifferentialGeometry}. Moreover, by Whitney's extension theorem \cite[Theorem 2.3.6]{Hormander_TheAnalysisOfLinearPartialDifferentialOperatorsI} every function $f\in C^k(X)$ has an extension to a function $\tilde{f}\in C^k(X^{+\delta})$ for some $\delta>0$, eliminating the need to define our regularity assumptions by means of the ambient space in these cases. Especially this means, that the assumption $f_{L,\P}^*\in C^k(X)$, as in \cite{YangDunson_BayesianManifoldRegression}, is not more general than our assumption $ f_{L,\P}^*\in C^k(X^{+\delta}) $.

The following theorem establishes an oracle inequality for Gaussian SVMs under assumption \ref{ass:boxdim_X}.
\begin{thm}\label{thm:ls_oracle_inequality}
Assume $\P$ satisfies \ref{ass:boxdim_X} with parameters $\Cdim, \varrho$ and that $Y\subset [-M,M]$. Further assume that $f_{L,\P}^*\in L_2(\R^d)\cap L_\infty(\R^d)$ as well as $|f_{L,\P}^*|_{B_{2,\infty}^\alpha(\P_X)}<\infty$. Then for all $\tau>0$, $n>1$, $\lambda\in(0,1)$ and $\gamma\in(0,1)$ we have
\begin{align}\label{eqn:ls_oracle_inequality}
\begin{split}
\Rk_{L,\P}(\wideparen{f}_{D,\lambda,\gamma})-\Rk_{L,\P}^* \leq& c_1\norm{f_{L,\P}^*}_{L_2(\R^d)}^2\lambda\gamma^{-d}+c_2  |f_{L,\P}^*|_{B_{2,\infty}^\alpha(\P_X)}^2\gamma^{2\alpha} \\
& +c_3 K \lambda^{-1/\log n}\gamma^{-\varrho}n^{-1}\log^{d+1}n+c_4\frac{\tau}{n}
\end{split}
\end{align}
with probability $\P^n$ not less than $1-3\e^{-\tau}$, where $c_1=9\pi^{-d/2}4^s$,
\begin{align*}
c_2=9\left( \frac{ \Gamma\left(\frac{\alpha+d}{2} \right) }{\Gamma\left( \frac{d}{2}\right)} \right)^2 2^{-\alpha}, \quad c_3=\max\left\{16M^2,1\right\}\max\left\{\Cdim,4M^2\right\}, \\
 c_4=3456M^2+15\max\{(2^s\norm{f_{L,\P}^*}_{L_\infty(\R^d)}+M)^2,4M^2\}
\end{align*}
with $s=\lfloor \alpha\rfloor+1$ and $K$ is a constant independent of $\P,n,\lambda$ and $\gamma$.

If in addition $\supp\,\P_X$ is contained in a $\varrho$-dimensional, infinitely differentiable, compact manifold then the factor $\log^{d+1}n$ on the right hand side of (\ref{eqn:ls_oracle_inequality}) can be substituted by $\log^{\varrho+1}n$, where the constant $c_3$ then additionally depends on the curvature of the manifold.
\end{thm}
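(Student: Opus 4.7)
The plan is to reduce the statement to a general oracle inequality for clipped SVMs of Bernstein type, of the kind the authors announce for their appendix, and then supply the two ingredients it requires: a bound on the regularized approximation error
\[
A(\lambda,\gamma) := \inf_{f \in H_\gamma(X)} \lambda \norm{f}_{H_\gamma(X)}^2 + \Rk_{L,\P}(f) - \Rk_{L,\P}^*,
\]
and an averaged entropy bound for the embedding $B_{H_\gamma(X)} \hookrightarrow L_2(\P_X)$ that reflects the box-counting dimension $\varrho$. For the least-squares loss clipped at $M$, the supremum bound $4M^2$ and the standard variance bound $V(L\circ \wideparen{f}-L\circ f_{L,\P}^*) \leq 16 M^2\,(\Rk_{L,\P}(\wideparen{f})-\Rk_{L,\P}^*)$ are immediate, which explains the $M$-dependence in $c_3$ and $c_4$.

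For the approximation bound I would use the now-standard construction of a witness $f_0 \in H_\gamma(\R^d)$ of the form $f_0 = K_{\gamma,s} \ast f_{L,\P}^*$, where $K_{\gamma,s}$ is an explicit linear combination of dilated Gaussians chosen so that $\int K_{\gamma,s}(h)\,\d h = 1$ and $f_0(x) - f_{L,\P}^*(x) = -\int K_{\gamma,s}(h)\,\Delta_h^s f_{L,\P}^*(x)\,\d h$, with $s=\lfloor\alpha\rfloor+1$. Since the Fourier transform of this $K_{\gamma,s}$ sits inside the (reciprocal) Gaussian weight defining $\norm{\cdot}_{H_\gamma(\R^d)}$, a direct Fourier computation yields $\norm{f_0}_{H_\gamma(\R^d)}^2 \leq \pi^{-d/2}4^s \gamma^{-d}\norm{f_{L,\P}^*}_{L_2(\R^d)}^2$, producing the $c_1$-term. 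For the $L_2(\P_X)$-error, Minkowski and the definition of the modulus of smoothness give
\[
\Norm{f_0 - f_{L,\P}^*}_{L_2(\P_X)} \leq \int |K_{\gamma,s}(h)|\,\omega_{s,L_2(\P_X)}(f_{L,\P}^*,\norm{h})\,\d h \leq c\,\gamma^\alpha |f_{L,\P}^*|_{B_{2,\infty}^\alpha(\P_X)},
\]
using $|K_{\gamma,s}(h)|\lesssim \gamma^{-d}\e^{-\norm{h}^2/\gamma^2}$ to bound the $\alpha$-th moment. Restricting $f_0$ to $X$ does not increase the RKHS norm, which produces the $c_2$-term.

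The central new step is the capacity estimate, and this is where assumption \ref{ass:boxdim_X} enters. Rather than working with the ambient-dimension bound on $\cN(B_{H_\gamma(\R^d)},\varepsilon, L_\infty)$, I would invoke the auxiliary covering-number results for Gaussian RKHSs that the paper advertises via Corollary \ref{cor:averaged_entropy_numbers}, which convert a covering bound of the form $\cN_{\ell_\infty^d}(\supp\,\P_X,\varepsilon)\leq \Cdim \varepsilon^{-\varrho}$ into an averaged entropy number estimate of the order $\gamma^{-\varrho/p}\log^{(d+1)/p} n$ (up to constants depending on $\Cdim$). Feeding this into the Bernstein-type oracle inequality produces the stochastic term $c_3 K\lambda^{-1/\log n}\gamma^{-\varrho}n^{-1}\log^{d+1} n$; the unusual factor $\lambda^{-1/\log n}$ comes from choosing the entropy exponent $p := 1/\log n$, which is a standard device to absorb a $\log$-factor and match the rate of the approximation error. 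The manifold improvement follows by replacing the ambient-dimension step inside the Gaussian entropy estimate by a covering of a tubular neighborhood of the manifold; the dependence on the ambient dimension then enters only through the curvature constant, and the $\log n$-exponent drops to $\varrho+1$.

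Assembling the pieces and tracking constants gives the stated inequality, with the $c_4\tau/n$ summand produced by the Bernstein-type concentration in the generic oracle inequality. The hardest part is expected to be the capacity step: one must propagate the low-dimensional covering estimate on $\supp\,\P_X$ through the analytic extension / Taylor argument that is typically used to bound covering numbers of Gaussian balls in $L_\infty(\P_X)$, while keeping the $\gamma$-exponent $\varrho$ rather than $d$ and not losing more than $\log^{d+1} n$ (respectively $\log^{\varrho+1} n$) to logarithmic factors. The approximation step is by now a routine convolution computation, and the final assembly is essentially bookkeeping to extract the explicit constants $c_1,\ldots,c_4$ stated in the theorem.
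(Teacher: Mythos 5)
Your proposal follows essentially the same route as the paper's proof: reduce to the general Bernstein-type oracle inequality fed with the averaged entropy estimate of Corollary \ref{cor:averaged_entropy_numbers} (resp.\ the Yang--Dunson covering bound in the manifold case), take $f_0=G\ast f_{L,\P}^*$ with $G$ a signed combination of dilated Gaussians so that $f_0-f_{L,\P}^*$ is a Gaussian average of $\Delta_h^s f_{L,\P}^*$, bound the $L_2(\P_X)$-error via Minkowski's integral inequality and the modulus of smoothness, and choose $p\asymp 1/\log n$ to produce the $\lambda^{-1/\log n}\log^{d+1}n$ factor. The only cosmetic deviation is that you obtain $\norm{f_0}_{H_\gamma(X)}^2\leq \pi^{-d/2}4^s\gamma^{-d}\norm{f_{L,\P}^*}_{L_2(\R^d)}^2$ by a Fourier computation, whereas the paper derives the same constant from the convolution characterization of $H_\gamma$ combined with the embedding $H_{j\gamma}\hookrightarrow H_\gamma$; both are standard and equivalent here.
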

Using the theorem above we can easily derive learning rates by choosing specific values for the regularization parameter $\lambda$ and the bandwidth $\gamma$.
\begin{cor}\label{cor:LS_learning_rates}
Let the assumptions of Theorem \ref{thm:ls_oracle_inequality} be satisfied with $\norm{f_{L,\P}^*}_{L_2(\R^d)}\leq C_1, \norm{f_{L,\P}^*}_{L_\infty(\R^d)}\leq C_2$ and $|f_{L,\P}^*|_{B_{2,\infty}^\alpha(\P_X)}\leq C_3$. Choosing $\gamma_n=n^{-1/(2\alpha+\varrho)} $ and $ \lambda_n=n^{-b}$ for some $b\geq (2\alpha+d)/(2\alpha+\varrho)$ there then exists a constant $C>0$ only depending on $\Cdim,C_{1,2,3}$ and $M$ such that for all $n>1$ and $\tau\geq1$ we have
\begin{align*}
\Rk_{L,\P}(\wideparen{f}_{D,\lambda_n,\gamma_n})-\Rk_{L,\P}^*\leq C\tau\, n^{-\frac{2\alpha}{2\alpha+\varrho}}\log^{d+1}n
\end{align*}
with probability $\P^n$ not less than $1-\e^{-\tau}$.

If in addition $\supp\,\P_X$ is contained in a $\varrho$-dimensional, infinitely differentiable, compact manifold then the factor $\log^{d+1}n$ above can be substituted by $\log^{\varrho+1}n$, where the constant $C$ then additionally depends on the curvature of the manifold.
\end{cor}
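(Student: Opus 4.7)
The plan is to directly substitute the choices $\lambda_n = n^{-b}$ and $\gamma_n = n^{-1/(2\alpha+\varrho)}$ into the four terms of the oracle inequality in Theorem \ref{thm:ls_oracle_inequality} and verify that each is controlled by a constant multiple of $\tau\, n^{-2\alpha/(2\alpha+\varrho)}\log^{d+1} n$. Under the assumed uniform bounds $\|f_{L,\P}^*\|_{L_2(\R^d)}\leq C_1$, $\|f_{L,\P}^*\|_{L_\infty(\R^d)}\leq C_2$, $|f_{L,\P}^*|_{B_{2,\infty}^\alpha(\P_X)}\leq C_3$, the constants $c_1,\ldots,c_4$ appearing in the theorem can then be absorbed into a single constant $C$ depending only on $\Cdim,C_{1,2,3},M$.

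I would handle the four terms in turn. The approximation term $c_2|f_{L,\P}^*|_{B_{2,\infty}^\alpha(\P_X)}^2\gamma_n^{2\alpha}$ is exactly of the target order by the very choice of $\gamma_n$, since $\gamma_n^{2\alpha}=n^{-2\alpha/(2\alpha+\varrho)}$. For the regularization term, one writes $\lambda_n\gamma_n^{-d}=n^{-b+d/(2\alpha+\varrho)}$ and uses the hypothesis $b\geq(2\alpha+d)/(2\alpha+\varrho)$ to bound this by $n^{-2\alpha/(2\alpha+\varrho)}$. The stochastic term $c_4\tau/n$ is trivially dominated by $c_4\tau n^{-2\alpha/(2\alpha+\varrho)}$ because $2\alpha/(2\alpha+\varrho)\leq 1$.

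The one non-mechanical step, which is where the choice $\lambda_n=n^{-b}$ is tailored to the form of the oracle inequality, is the treatment of the factor $\lambda_n^{-1/\log n}$ in the sample-error term $c_3 K\lambda_n^{-1/\log n}\gamma_n^{-\varrho}n^{-1}\log^{d+1}n$. Since $\log(n^{1/\log n})=1$, one has
\begin{align*}
\lambda_n^{-1/\log n}=n^{b/\log n}=\e^{b},
\end{align*}
so this apparently dangerous factor is in fact an absolute constant. With this, the sample-error term collapses to $c_3 K \e^{b} n^{\varrho/(2\alpha+\varrho)-1}\log^{d+1}n = c_3 K\e^{b}\, n^{-2\alpha/(2\alpha+\varrho)}\log^{d+1}n$, as required.

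Finally, to upgrade the confidence level from $1-3\e^{-\tau}$ to $1-\e^{-\tau}$, I would apply the theorem with $\tau':=\tau+\log 3$ in place of $\tau$, noting that $\tau+\log 3\leq(1+\log 3)\tau$ for $\tau\geq 1$, which only affects the final constant. The manifold refinement is obtained verbatim, simply by invoking the second statement of Theorem \ref{thm:ls_oracle_inequality} so that $\log^{d+1}n$ is replaced by $\log^{\varrho+1}n$; the curvature dependence of the constant is inherited directly from the oracle inequality. No genuine obstacle arises; the corollary is essentially bookkeeping, with the only subtle point being the identity $n^{1/\log n}=\e$ that neutralizes $\lambda^{-1/\log n}$.
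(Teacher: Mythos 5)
Your proposal is correct and follows essentially the same route as the paper's proof: plug the choices into the oracle inequality of Theorem \ref{thm:ls_oracle_inequality}, use $b\geq(2\alpha+d)/(2\alpha+\varrho)$ for the regularization term, observe $\lambda_n^{-1/\log n}=\e^{b}$ for the sample-error term, and rescale $\tau$ (the paper's ``substitution of $\tau$'') to pass from confidence $1-3\e^{-\tau}$ to $1-\e^{-\tau}$. The only cosmetic remark is that, exactly as in the paper's own argument, the factor $\e^{b}$ makes the final constant depend on the chosen $b$ as well, so this is not a defect of your write-up relative to the paper.
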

\begin{rmk}
The boundedness assumption $Y\subset [-M,M]$ can be relaxed to an exponential decay of the distribution of the noise variable $y-f_{L,\P}^*(x)$. Under this assumption one can show, that by choosing a sequence of logarithmically growing clipping values $M=M_n$, the resulting learning algorithm achieves the same rate as in the corollary above modulo an extra $\log^2 n$ factor. This statement can be proven by showing that the clipping value $M_n$ is correct with sufficiently high probability. As the details are a little bit technical and would distract from the actual conclusions of our results, we refer to \cite[Theorem 3.6]{EbertsSteinwart_OptimalRegressionRatesForSVMsUsingGaussianKernels}.
\end{rmk}
\begin{rmk}\label{rmk:optimality_reg_rates}
We briefly want to discuss the optimality of the rate in Corollary \ref{cor:LS_learning_rates}. The classical result of Stone \cite{Stone_GlobalOptimalRatesOfConvergenceForNonparametricRegression} considers the case where $\P_X$ is the uniform distribution on $[0,1]^d$ and states that $n^{-\frac{2\alpha}{2\alpha+d}}$ is the optimal rate of convergence. This statement can directly be generalized to the case where $\P_X$ is the uniform $d'$-dimensional distribution on the cube in the first $d'$ axes of $\R^d$, where $d'\in\{1,\ldots,d\}$. Note that in this case $\P$ satisfies \ref{ass:boxdim_X} for all $\varrho\geq d'$. From this we can conclude that in the case $\varrho\in\{1,\ldots,d\}$ the rates in Corollary \ref{cor:LS_learning_rates} are optimal up to the logarithmic factor. In the general non-integer case $\varrho\geq 1$ we can still deduce a lower bound of order $2\alpha/(2\alpha+\lfloor\varrho\rfloor)$, however there is no immediate argument that this is the optimal lower bound. We strongly hypothesize that the general optimal lower bound is of order $2\alpha/(2\alpha+\varrho)$ but we will leave this as a conjecture for possible future research.
\end{rmk}
\begin{rmk}\label{rmk:alternative_reg_ass_reg}
It is also possible to formulate Theorem \ref{thm:ls_oracle_inequality} and Corollary \ref{cor:LS_learning_rates} under alternative regularity assumptions. To briefly elaborate this, recall that in \cite{YeZhou_SVMLearningAndLpApproximationByGaussianOnRiemannianManifolds} the authors consider the case where $X$ is a compact, connected and smooth submanifold of $\R^d$ without boundary and consider a convolution-type operator $S_{\gamma}:L_2(\mu)\rightarrow H_\gamma(X)$, where $\mu$ is the measure on $X$ defined by the Riemannian volume form and derive the bounds
\begin{align}
\norm{S_{\gamma}f}_{H_\gamma(X)}^2\leq C_1 \norm{f}_{L_2(\mu)}^2\gamma^{-d} \label{eqn:zhou_reg_term} \\
\norm{S_{\gamma}f-f}_{L_2(\mu)}^2\leq C_2\norm{f}_{W^{2,2}(X)}^2\gamma^4 \label{eqn:zhou_approx_error}\\
\norm{S_\gamma f}_{\ell_\infty(X)} \leq C_3 \norm{f}_{\ell_\infty(X)} \label{eqn:zhou_sup_bound}
\end{align}
\cite[Lemma 4, Theorem 2, and Lemma 2]{YeZhou_SVMLearningAndLpApproximationByGaussianOnRiemannianManifolds}, where $W^{2,2}(X)$ denotes the Sobolev space on $X$. Using these results one can easily derive a modification of Theorem \ref{thm:ls_oracle_inequality} and Corollary \ref{cor:LS_learning_rates} under the assumption that $f_{L,\P}^*\in W^{2,2}(X)$ is bounded and $\P_X$ has a bounded density with respect to $\mu$ and prove learning rates of the form $n^{-\frac{4}{4+\varrho}}\log^{d+1}n$. This is done by simply using (\ref{eqn:zhou_approx_error}) instead of Lemma \ref{lem:L2error}, (\ref{eqn:zhou_reg_term}) instead of Lemma \ref{lem:regularization_term} and the supremum bound (\ref{eqn:zhou_sup_bound}) instead of an analogous bound we derive in the proof of Theorem \ref{thm:ls_oracle_inequality}. Unfortunately, the authors also point out, that the order of approximation cannot be improved if we assume $f\in W^{m,2}(X)$ for some $m>2$ using the operator $S_\gamma$.
\end{rmk}
The learning rates in Corollary \ref{cor:LS_learning_rates} can only be achieved, if we know the intrinsic dimension $\varrho$ of the data, as well as the regularity $\alpha$ of the Bayes decision function. However, this is highly unrealistic in practice. The following theorem therefore shows, that a TV-SVM with appropriately chosen candidate sets $\Lambda_n$ and $\Gamma_n$ achieves the same rate without knowledge on $\varrho$ and $\alpha$.
\begin{thm}\label{thm:LS_adaptive_rates}
Let $A_n$ be a minimal $1/\log n$-net of $(0,1]$ with $1\in A_n$ and let $B_n$ be a minimal $1/\log n$-net of $[1,d]$ with $d\in B_n$. Set $\Gamma_n:=\{n^{-a}:a\in A_n\}$ and $\Lambda_n:=\{n^{-b}:b\in B_n\}$. Let the assumptions of Theorem \ref{thm:ls_oracle_inequality} be satisfied with $\norm{f_{L,\P}^*}_{L_2(\R^d)}\leq C_1, \norm{f_{L,\P}^*}_{L_\infty(\R^d)}\leq C_2$ and $|f_{L,\P}^*|_{B_{2,\infty}^\alpha(\P_X)}\leq C_3$ and assume $\varrho\geq 1$. Then there exists a constant $C>0$ only depending on $\Cdim,C_{1,2,3}$ and $M$ such that for all $n>1$ and $\tau\geq 1$ the TV-SVM using $\Lambda_n$ and $\Gamma_n$ satisfies
\begin{align*}
\Rk_{L,\P}(\wideparen{f}_{D_1,\lambda_{D_2},\gamma_{D_2}})-\Rk_{L,\P}^*\leq C\tau\, n^{-\frac{2\alpha}{2\alpha+\varrho}}\log^{d+1}n
\end{align*}
with probability $\P^n$ not less than $1-\e^{-\tau}$.

If in addition $\supp\,\P_X$ is contained in a $\varrho$-dimensional, infinitely differentiable, compact manifold then the factor $\log^{d+1}n$ above can be substituted by $\log^{\varrho+1}n$, where the constant $C$ then additionally depends on the curvature of the manifold.
\end{thm}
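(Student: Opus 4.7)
The plan is to deduce the adaptive rate by combining Theorem \ref{thm:ls_oracle_inequality} with a general oracle inequality for the training-validation selection step. First I would invoke (from the appendix) a TV-SVM oracle inequality of the shape
\begin{align*}
\Rk_{L,\P}\!\left(\wideparen{f}_{D_1,\lambda_{D_2},\gamma_{D_2}}\right) - \Rk_{L,\P}^* \leq C\!\!\!\min_{(\lambda,\gamma)\in\Lambda_n\times\Gamma_n}\!\!\left(\Rk_{L,\P}(\wideparen{f}_{D_1,\lambda,\gamma}) - \Rk_{L,\P}^*\right) + c\,\frac{\tau+\log|\Lambda_n\times\Gamma_n|}{n},
\end{align*}
valid with probability at least $1-\e^{-\tau}$ conditionally on $D_1$. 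By construction $|A_n|,|B_n|=\O(\log n)$, so $\log|\Lambda_n\times\Gamma_n|=\O(\log\log n)$, and the additive term is of order $(\tau+\log\log n)/n$, which is absorbed in the target rate $n^{-2\alpha/(2\alpha+\varrho)}\log^{d+1}n$.

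Second, I would show that the grid contains a pair of hyperparameters that is effectively as good as the oracle choice of Corollary \ref{cor:LS_learning_rates}. Write $a^\ast:=1/(2\alpha+\varrho)$ and $b^\ast:=(2\alpha+d)/(2\alpha+\varrho)$. The standing assumption $1\leq\varrho\leq d$ immediately gives $a^\ast\in(0,1]$ and $b^\ast\in[1,d]$ (the upper bound $b^\ast\leq d$ reduces to $2\alpha(d-1)+d(\varrho-1)\geq 0$), so there exist $a_n\in A_n$ and $b_n\in B_n$ with $|a_n-a^\ast|, |b_n-b^\ast|\leq 1/\log n$. Setting $\gamma_n:=n^{-a_n}$, $\lambda_n:=n^{-b_n}$, the multiplicative perturbations $\gamma_n/n^{-a^\ast}$ and $\lambda_n/n^{-b^\ast}$ lie in a fixed compact interval, since $n^{\pm 1/\log n}$ is a bounded constant. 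Applying Theorem \ref{thm:ls_oracle_inequality} to the training set $D_1$ (of size $m\geq n/2$) with $(\lambda_n,\gamma_n)$ and noting that every term on the right of (\ref{eqn:ls_oracle_inequality}) is polynomial in $\lambda$ and $\gamma$, the resulting bound is only perturbed by a constant factor. Replaying the balancing argument from Corollary \ref{cor:LS_learning_rates} then yields
\begin{align*}
\Rk_{L,\P}(\wideparen{f}_{D_1,\lambda_n,\gamma_n}) - \Rk_{L,\P}^* \leq C'\,\tau\, n^{-\frac{2\alpha}{2\alpha+\varrho}}\log^{d+1}n
\end{align*}
with $\P^m$-probability at least $1-\e^{-\tau}$. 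Substituting this estimate into the minimum in the first step and union-bounding the two $\e^{-\tau}$ events completes the argument; the manifold version follows identically with $\log^{\varrho+1}n$.

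The main technical obstacle is establishing the sharpened TV-SVM oracle inequality with only a \emph{logarithmic} dependence on $|\Lambda_n\times\Gamma_n|$, as opposed to the linear dependence in \cite[Theorem 7.24]{SteinwartChristmannSVMs}; this is precisely the refinement flagged as an independent contribution in the introduction, and it is what makes grids of cardinality $\O(\log n)$ suffice. I expect this to follow from a Bernstein-type concentration inequality applied to the empirical validation risks $\tfrac{1}{n-m}\sum_{i=m+1}^n L(y_i,\wideparen{f}_{D_1,\lambda,\gamma}(x_i))$ together with a simple union bound over the finite grid, using that clipping ensures the loss integrands are uniformly bounded and that their conditional variance is controlled by the excess risk (a standard variance bound for clipped least-squares). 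Once this inequality is in place, the remainder of the argument is the discretization and perturbation bookkeeping sketched above.
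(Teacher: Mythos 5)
Your proposal is correct and follows essentially the same route as the paper: the paper likewise combines a finite-grid ERM oracle inequality for the validation step with the observation that the $1/\log n$-nets in the exponents contain hyperparameters within a constant factor $n^{\pm 1/\log n}=\e^{\pm 1}$ of the oracle choices of Corollary \ref{cor:LS_learning_rates}, so the "sharpened" selection inequality you single out as the main obstacle is not actually needed as a new result — the paper simply invokes \cite[Theorem 7.2]{SteinwartChristmannSVMs}, which already penalizes only by $\log(1+|\Lambda_n\times\Gamma_n|)$ (the paper's novelty is the logarithmic grid size, not the form of this inequality). The only cosmetic difference is that the paper takes $\lambda_0=n^{-d}$ directly (admissible since $\varrho\geq 1$) rather than a net point near $b^\ast=(2\alpha+d)/(2\alpha+\varrho)$, and chooses the net point for $a$ from above so that only constant factors such as $\e^{d}$, $\e^{\varrho}$ appear.
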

\begin{rmk}\label{rmk:validation_set}
The proof of Theorem \ref{thm:LS_adaptive_rates} shows that the statement also holds if we pick as candidate set for $\lambda$ the singleton $\Lambda_n=\{ n^{-d} \}$. We decided to formulate the theorem as it is, because this choice is closer to the practical usage of the training validation approach. To see why a singleton $\Lambda_n$ is sufficient recall that to achieve optimal rates the regularization parameter $\lambda_n$ only needs to satisfy $\lambda_n=n^{-b}$ for some $b\geq (2\alpha+d)/(2\alpha+\varrho)$. If we assume $\varrho\geq 1$ this bound on $b$ is satisfied for $b=d$. This also shows that with no lower bound on $\varrho$ the regularization parameter $\lambda_n$ possibly needs to decay arbitrarily fast as $(2\alpha+d)/(2\alpha+\varrho)$ is unbounded for $\alpha,\varrho>0$. On the one hand this shows why we need the additional constraint $\varrho\geq 1$ in Theorem \ref{thm:LS_adaptive_rates}, on the other hand we want to mention that the case $\varrho<1$ is of little practical interest anyway. Nevertheless, the statement of Theorem \ref{thm:LS_adaptive_rates} still holds with the rate $n^{-2\alpha/(2\alpha+1)}\log^{d+1}n$ in the case $\varrho<1$.
\end{rmk}
We conclude this section with a comparison to existing results on learning rates for least-squares regression under the assumption, that the data has some low dimensional intrinsic structure, which is described by imposing constraints on $\supp\,\P_X$.
\begin{description}
\item[SVMs.] In \cite{YeZhou_LearningAndApproximationByGaussiansOnRiemannianManifolds} the authors consider the case, where the input space $X\subset \R^d$ is a compact, smooth manifold of dimension $\varrho$ and that the Bayes decision function is $\alpha$-H\"older continuous with respect to the geodesic distance on $X$ for some $\alpha\in(0,1]$. They derive learning rates of the form $ (\log^2(n)/n)^{\alpha/(8\alpha+4\varrho)}$. Under these assumptions on $X$, the assumption that the Bayes function is $\alpha$-H\"older continuous with respect to the geodesic distance is equivalent to $\alpha$-H\"older continuity with respect to the Euclidean distance, which is a consequence of \cite[Lemma 1]{YeZhou_LearningAndApproximationByGaussiansOnRiemannianManifolds}. That is, Corollary \ref{cor:LS_learning_rates} gives a significant improvement of the result in \cite{YeZhou_LearningAndApproximationByGaussiansOnRiemannianManifolds} under much less restrictive assumptions. Additionally, they do not consider adaptive parameter selection, i.e. the dimension $\varrho$ of $X$ and $\alpha$ need to be known.
\item[Bayesian Regression with Gaussian processes.] In \cite{YangDunson_BayesianManifoldRegression} the authors consider the case of a compact $\varrho$-dimensional manifold $X\subset \R^d$, which is sufficiently regular and a $C^\alpha$ Bayes function, where $\alpha\leq 2$. For Bayesian regression using Gaussian processes with squared exponential covariance they derive a rate of $n^{-2\alpha/(2\alpha+\varrho)}\log^{\varrho+1} n$, which is identical to ours, but under much more restrictive assumptions on both $\alpha$ and $\supp\,\P_X$. Additionally, they present a training validation scheme for choosing the hyperparameters of the prior distribution. Under some additional technical assumption, which is hard to verify, they prove that this method achieves the same rates adaptively. Also note that Bayesian GP regression is related to Gaussian least-squares SVMs in the sense that the posterior mean function of the Gaussian process coincides with the SVM solution provided the prior distribution is suitably chosen, see \cite[Proposition 3.6]{KanHenSejSri18}. The choice of the prior distribution in \cite{YangDunson_BayesianManifoldRegression} however does not lead to the same decision function as the one chosen by a Gaussian least-squares SVM.
\item[Local Polynomial Regression.] In \cite{BickelLi_LocalPolynomialRegressuinOnUnknownManifolds} the authors consider local linear regression in the setting of a differentiable $\varrho$-dimensional manifold $X\subset\R^d$ and a twice differentiable Bayes function. They further assume, that $\P_X$ has a differentiable density w.r.t. local charts and prove the learning rate $n^{-4/(4+\varrho)}$. They also state, that the result can be extended if the Bayes function is $\alpha$-times differentiable using polynomials of degree $\alpha-1$ to achieve the rate $n^{-2\alpha/(2\alpha+\varrho)}$. They propose a training validation scheme for bandwidth selection, but give no theoretical guarantees.
\item[Tree-Based Regressor.] In \cite{KpotufeDasgupta_ATreeBasedRegressorThatAdaptsToIntrinsicDimension} the authors consider a tree-based locally constant regressor. Based on the doubling dimension\footnote{See also the short discussion after Example \ref{exa:attractor}.} $\varrho$ of the input space $X$ they prove that for a Lipschitz continuous target function their estimator achieves the learning rate $n^{-2/(2+k)}$ for a constant $k\in\O(\varrho\log \varrho)$. They achieve this rate adaptively using a training validation scheme, as well as a stopping criterion for building the tree.
\item[$k$-Nearest Neighbor.] In \cite{KulkarniPosner_RatesOfConvergenceOfNearestNeighborEstimationUnderArbitrarySampling} the authors show, that under assumption \ref{ass:boxdim_X} and for an $\alpha$-H\"older continuous target function, where $0<\alpha \leq 1$, the $k$-NN rule achieves a learning rate of $n^{-2\alpha/(2\alpha+\varrho)}$. They only achieve this rate with knowledge on both, $\varrho$ and $\alpha$.
\end{description}

In the comparison above we only mentioned results, where the assumption on the intrinsic dimension is directly comparable to ours. For the sake of completeness we want to mention, that there are results, where the intrinsic dimension of the data is described by assuming that $\P_X$ is a so-called doubling measure. For example, this assumption is considered in \cite{Kpotufe_kNNRegressionAdaptsToLocalIntrinsicDimension} for the $k$-nearest neighbor method and in \cite{KpotufeGarg_AdaptivityToLocalSmoothnesAndDimensionInKernelRegression} for Nadaraya--Watson kernel regression estimators. For an introduction to this notion of intrinsic dimensionality and the results built on it, we refer the reader to the cited articles.

As Remark \ref{rmk:alternative_reg_ass_reg} already indicates, some of our later results make it very easy to derive learning rates under slightly different regularity assumptions on the Bayes decision function under assumption \ref{ass:boxdim_X}. Even more general, we derive an oracle inequality in Theorem \ref{thm:general_oracle_inequality} for SVMs using Gaussian kernels under assumption \ref{ass:boxdim_X} for generic loss functions $L$. Using Theorem \ref{thm:general_oracle_inequality} it will then be an easy task to derive improved learning rates in the style of Corollary \ref{cor:LS_learning_rates} for other regression tasks. For example, the conditional $\tau$-quantile function can be estimated using the pinball loss
\begin{align*}
\Lpin(y,t):=
\begin{cases}
(1-\tau)(t-y), &\text{ if } y<t,\\
\tau(y-t), &\text{ if } t\geq y.
\end{cases}
\end{align*}
Using a variance bound, see Definition \ref{dfn:variance_bound}, and a calibration inequality  for the pinball loss from \cite{SteinwartChristmann_EstimatingConditionalQuantilesWithTheHelpOfThePinballLoss} and imposing some standard regularity assumptions on $f_{\Lpin,\P}^*$ we can thus derive learning rates for the pinball loss, as well as bounds on the $L_q(\P_X)$ distance from $f_{D,\lambda,\gamma}$ to $f_{\Lpin,\P}^*$ under assumption \ref{ass:boxdim_X}. This would generalize some of the results from \cite{EbertsSteinwart_OptimalRegressionRatesForSVMsUsingGaussianKernels} in the sense, that we can substitute $d$ with $\varrho$ in their learning rates. The same is true for the conditional expectile function, which is estimated by the asymmetric least-squares loss
\begin{align*}
\Lals(y,t):=
\begin{cases}
(1-\tau)(t-y)^2, &\text{ if } y<t,\\
\tau(y-t)^2, &\text{ if } t\geq y,
\end{cases}
\end{align*}
where $\tau\in(0,1)$, since in \cite{FarooqSteinwart_LearningRatesforKernelBasedExpectileRegression} the necessary variance bound and a calibration inequality for $\Lals$ are derived.

Another possibility to further generalize our results is to consider a larger class of kernels. For example Corollary \ref{cor:averaged_entropy_numbers}, a main tool for bounding the statistical error, can be generalized with the same techniques to anisotropic Gaussian kernels, i.e.~Gaussian kernels that have a different bandwidth parameter in each covariate. Anisotropic Gaussian SVMs are for example considered in \cite{HangSteinwartAnisotropic} where the authors show that, compared to a regular Gaussian kernel, the anisotropic one has an improved performance for regression functions that have a varying degree of smoothness in each covariate.

\section{Learning Rates for Classification}\label{sec:learning_rates_classification}
Throughout this section let $Y=\{-1,1\}$ and fix a version $\eta:X\rightarrow[0,1]$ of the posterior probability of $\P$, that is the probability measures $\P(\,\cdot\,|x)$ on $Y$ defined by $\P(\{1\}|x)=\eta(x)$ for $x\in X$ fulfill
\begin{align*}
\P(A\times B)=\int_A \P(B|x) \d\P_X(x)
\end{align*}
for all measurable sets $A\subset X$ and $B\subset Y$. With the posterior probability $\eta$ any
\emph{$\R$-valued }optimal labeling strategy $f_{\Lclass,\P}^*:X\to \R$ satisfies $\mathrm{sign}( f_{\Lclass,\P}^*(x))=\mathrm{sign}(2\eta(x)-1)$, cf. \cite[Example 2.4]{SteinwartChristmannSVMs}. Regularity assumptions on $\P$ for classification can be described by smoothness of $\eta$. However, can be rather restrictive, since irregularities in a region, where $\eta$ is close to 1 do not make the classification task harder. Additionally, smoothness assumptions on $\eta$ are often combined with the assumption, that $\P_X$ has a Lebesgue-density. However, these types of assumptions are typically motivated by considering plug-in classification rules, which implicitly treat the binary classification problem as a least-squares regression problem. Now recall that, in the binary classification setting, the Bayes decision function $f_{\Lls,\P}^*$ for the least-squares loss  equals $2\eta-1$, and therefore
smoothness properties of $\eta$ directly translate to such for $f_{\Lls,\P}^*$. 
If, however, the hinge loss is used, the Bayes decision function equals 
$\mathrm{sign}(2\eta -1)$ on $\tilde X:= \{x\in X: \eta(x) \not\in\{0,1/2,1\}\}$, and any hinge loss consistent learning method is required to approach this
Bayes decision function on $\tilde X$. Now observe that the smoothness of $\eta$ has 
little influence on the smoothness of $\mathrm{sign}(2\eta -1)$, in fact, 
the latter is typically a step function, even if $\eta$ is a $C^\infty$-function.
For this reason, smoothness assumptions for classification algorithms using the hinge loss
seem to be less attractive.

To avoid these issues, we first observe that it is intuitively hard to correctly predict a label $y$ for $x\in X$ with $\eta(x)\approx 1/2$. Similarly, it may be harder to make a correct prediction for points $x\in X$ that are located near the decision boundary $\{\eta = 1/2\}$. The following assumptions capture this intuition by imposing constraints on the mass and/or location of the critical level $\eta(x)\approx 1/2$.

\begin{as}\label{as:TNC}
There exist constants $C_*>0$ and $q\in[0,\infty]$ such that
\begin{align*}
\P_X\big(\{x\in X: |2\eta(x)-1|<t\}\big)\leq (C_* t)^q
\end{align*}
for all $t\geq 0$.
\end{as}
Note that every distribution satisfies Assumption \ref{as:TNC} for $q=0$. The other extreme case $q=\infty$, using the convention $t^\infty = 0$ for $t\in(0,1)$, corresponds to the high regularity case where $\eta(x)$ is bounded away from the critical level $1/2$ for $\P_X$-almost all $x\in X$. Assumption 4.1 is in the literature often referred to as Tsybakov noise condition and is a standard assumption in nonparametric classification. It was originally introduced in \cite{MammenTsybakov_SmoothDiscriminationAnalysis} for discrimination analysis and later in \cite{Tsybakov_OptimalAggregationOfClassifiersInStatisticalLearning} for classification. Note that Assumption \ref{as:TNC} only restricts the mass and not the location of noise. Our second regularity assumption relates the noise to the distance to the decision boundary. To this end, we need the following
\begin{dfn}
Let $X_{-1}:=\{x\in X:\eta(x)<1/2\}$ and $X_1:=\{x\in X:\eta(x)>1/2\}$ and define
\begin{align*}
\Delta(x):=\begin{cases}
\dist(x,X_1) &\text{ if } x\in X_{-1},\\
\dist(x,X_{-1}) &\text{ if } x\in X_1,\\
0&\text{ else,}
\end{cases}
\end{align*}
where $\dist(x,A):=\inf_{y\in A}\norm{x-y}$.
\end{dfn}

\begin{as}\label{as:MNE}
There exist constants $C_{**}>0$ and $\beta >0$ such that
\begin{align*}
\int_{\{x\in X:\Delta(x)<t\}}|2\eta(x)-1|\d\P_X(x)\leq C_{**}t^\beta
\end{align*}
for all $t\geq 0$.
\end{as}
Note that neither of our Assumptions \ref{as:TNC} and \ref{as:MNE} require the existence of a density of $\P_X$ and are therefore perfectly suited to be combined with our assumption \ref{ass:boxdim_X}. The condition in Assumption \ref{as:MNE} was introduced (in a slightly different version) in \cite{SteinwartScovel_FastRatesForSupportVectorMachinesUsingGaussianKernels} to prove fast classification rates for Gaussian SVMs. The authors used the term geometric noise condition to describe this assumption. Assumption \ref{as:MNE} was further adopted in \cite{BlaschzykSteinwart_ImprovedClassificationRatesUnderRefinedMarginConditions} for the analysis of a histogram based classifier. Also, in \cite{UrnerBenDavid_ProbabilisticLipschitzness} the authors point out that their \textit{probabilistic Lipschitzness} condition in Definition 1 is closely related to Assumption \ref{as:MNE}. Intuitively, Assumption \ref{as:MNE} is satisfied for  a large exponent $\beta$
if $\P_X$ has only a low concentration in the vicinity of the decision boundary, or if
$\P$ is particularly noisy in this region. For example, in the extreme case, in which $X_{-1}$ and $X_1$ have positive distance, we may choose arbitrarily large $\beta$, see also 
the first part of Example \ref{exa:noise_exponents} below for a simple example
of distributions having a low but positive concentration around the decision boundary.

In the following we want to give an overview of some influencing properties of the distribution $\P$ on Assumption \ref{as:TNC} and \ref{as:MNE}. We first state a proposition, which is the content of \cite[Lemma 8.23]{SteinwartChristmannSVMs}, that gives an important sufficient condition for Assumption \ref{as:MNE} to be satisfied and simultaneously relates it to another common regularity assumption for nonparametric classification.
\begin{prop}\label{prop:MNE}
Assume there exist constants $c,\alpha>0$ such that $|2\eta(x)-1|\leq c\Delta^\alpha(x)$ for $\P_X$-almost all $x\in X$ and that Assumption \ref{as:TNC} is satisfied for constants $C_*$ and $q$. Then Assumption \ref{as:MNE} is satisfied for $\beta=\alpha(q+1)$ and some constant $C_{**}$ only depending on $c$ and $C_*$.
\end{prop}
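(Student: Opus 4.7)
The plan is to combine the pointwise growth bound on $|2\eta - 1|$ with the mass bound from Assumption \ref{as:TNC} in a direct way: the first gives a sup bound on the integrand over $\{\Delta<t\}$, and the second controls the $\P_X$-mass of a slightly larger superlevel set, both with exponents that multiply to yield $\alpha(q+1)$.

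First, by the assumption $|2\eta(x)-1|\leq c\Delta(x)^\alpha$ that holds for $\P_X$-almost every $x$, we have, up to a $\P_X$-null set, the inclusion
\begin{align*}
\{x\in X:\Delta(x)<t\}\subset \{x\in X:|2\eta(x)-1|\leq ct^\alpha\}.
\end{align*}
Passing from the weak inequality $|2\eta(x)-1|\le ct^{\alpha}$ to the strict form needed to apply Assumption \ref{as:TNC} is the one technical point: I would approximate $\{|2\eta-1|\le ct^{\alpha}\}$ from above by the sets $\{|2\eta-1|<ct^{\alpha}+\varepsilon\}$, apply Assumption \ref{as:TNC} to each, and let $\varepsilon\downarrow 0$ by upper continuity of $\P_X$. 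This yields
\begin{align*}
\P_X\bigl(\{x\in X:\Delta(x)<t\}\bigr)\leq (C_* c)^{q}\, t^{\alpha q}.
\end{align*}

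Second, on the same set $\{\Delta(x)<t\}$ the growth assumption gives the pointwise bound $|2\eta(x)-1|\leq c t^\alpha$ $\P_X$-almost surely. Combining these two bounds then gives
\begin{align*}
\int_{\{x\in X:\Delta(x)<t\}}|2\eta(x)-1|\d\P_X(x) \leq ct^{\alpha}\cdot (C_{*}c)^{q}\, t^{\alpha q} = c^{\,q+1}C_{*}^{\,q}\, t^{\alpha(q+1)},
\end{align*}
which is exactly Assumption \ref{as:MNE} with $\beta=\alpha(q+1)$ and $C_{**}=c^{\,q+1}C_{*}^{\,q}$, a constant that depends only on $c$ and $C_*$ as claimed. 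The edge cases are minor: $q=0$ gives the trivial bound $C_{**}=c$ with $\beta=\alpha$, and for $q=\infty$ (using the convention $t^{\infty}=0$ for $t\in(0,1)$) the set $\{\Delta<t\}$ has $\P_X$-measure zero for all sufficiently small $t$, so the integral vanishes there and Assumption \ref{as:MNE} holds with any $\beta$.

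The main (and essentially only) obstacle is the strict-versus-nonstrict inequality subtlety in step one; once that is handled by the limiting argument above, the proof is just the product of the sup bound on the integrand with the mass bound.
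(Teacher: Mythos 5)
Your argument is correct and is essentially the proof behind the result the paper cites (Lemma 8.23 of Steinwart--Christmann): on $\{\Delta<t\}$ bound the integrand by $ct^{\alpha}$ and bound the mass of that set via Assumption \ref{as:TNC} through its inclusion in a sublevel set of $|2\eta-1|$, yielding $C_{**}=c^{q+1}C_*^{q}$ and $\beta=\alpha(q+1)$. Note only that your $\varepsilon$-limiting step is unnecessary: since $c,\alpha>0$, the strict inequality $\Delta(x)<t$ already gives $|2\eta(x)-1|\leq c\Delta^{\alpha}(x)<ct^{\alpha}$, so $\{\Delta<t\}\subset\{|2\eta-1|<ct^{\alpha}\}$ up to a $\P_X$-null set and Assumption \ref{as:TNC} applies directly.
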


For $\alpha\leq 1$ the assumption in the proposition above can be seen as a substantially weaker form of $\alpha$-H\"older regularity for $\eta$, since for some $x_0\in\{x:\eta(x)=1/2\}$ attaining minimum distance to $x\in X$ this condition can be rewritten as $|\eta(x)-\eta(x_0)|\leq c|x-x_0|^\alpha/2$. That is, the H\"older condition does not need to be satisfied for \textit{arbitrary} $x,x_0\in X$ but only where one of the points considered is in $\{\eta=1/2\}$.
The following examples give some intuition on Assumption \ref{as:TNC} and especially \ref{as:MNE}, which is harder to interpret, and explain how they relate to each other.
\begin{exa}\label{exa:noise_exponents}
In the following let $X=[-1,1]^2$.
\begin{enumerate}
\item Assume $\P_X$ restricted to $A:=[-1/2,1/2]\times[-1,1]$ is proportional to a measure with Lebesgue-density $|x_1|^\sigma\mathrm{d}\lambda(x_1,x_2)$ for some $\sigma>0$ and on $X\backslash A$ is proportional to the uniform distribution. Further assume $\eta$ is the sawtooth function
\begin{align*}
\eta(x_1,x_2)=\begin{cases}
2(1-x_1) &\text{ for } x_1\in(1/2,1],\\
2x_1 &\text{ for } x_1\in[-1/2,1/2],\\
2(-1-x_1) &\text{ for } x_1\in[-1,-1/2).
\end{cases}
\end{align*}
Then by the behavior of $\P_X$ and $\eta$ near $x_1=\pm 1$ the optimal exponent in Assumption \ref{as:TNC} is given by $q=1$. By the low concentration of $\P_X$ near the decision boundary Assumption \ref{as:MNE} is fulfilled for the exponent $\beta=\sigma+2$.
\item Assume that $\P_X$ is uniform on $\{x\in[-1,1]^2:|x_2|\leq |x_1|^\zeta\}$ for some $\zeta>0$ and that $2\eta(x)-1=x_1$, i.e.~the classes $X_1,X_{-1}$ only meet at the point $(0,0)$ and not along a one-dimensional curve. Then Assumption \ref{as:TNC} is fulfilled for $q=\zeta+1$ and Assumption \ref{as:MNE} for $\beta=\zeta+2$.
\end{enumerate}
The first example shows that Assumption \ref{as:TNC} describes the global amount of mass close to the critical level $\eta=1/2$, while Assumption \ref{as:MNE} can additionally benefit from low concentration of $\P_X$ in the vicinity of the decision boundary. The second example shows that Assumption \ref{as:MNE} can also benefit from geometrical assumptions on the decision boundary, respectively the decision classes, and especially that the exponents in Assumption \ref{as:TNC} and \ref{as:MNE} can be simultaneously large. Also note that the exponent from Assumption \ref{as:TNC} can be deteriorated by the behavior of $\P_X$ or $\eta$ far away from the decision boundary, as seen in the first example, while Assumption \ref{as:MNE} is robust to such perturbations. Further note, that also a combination of the effects demonstrated by the examples above is possible.
\end{exa}
Before presenting the main results of this section, we briefly address the issue of the non-convexity of $\Lclass$. By Zhang's inequality, cf.~\cite[Theorem 2.31]{SteinwartChristmannSVMs} we have
\begin{align*}
\Rk_{\Lclass,\P}(f)-\Rk_{\Lclass,\P}^*\leq \Rk_{\Lhinge,\P}(f)-\Rk_{\Lhinge,\P}^*
\end{align*}
for all $f:X\rightarrow \R$. That is, by deriving learning rates for the loss $\Lhinge$, we implicitly derive the same learning rates for the loss $\Lclass$.

Our main results of this section are organized as in the previous section: We first present a general oracle inequality and then derive learning rates by a suitable choice of hyperparameters. Finally, we show that a TV-SVM achieves the same rates adaptively.
\begin{thm}\label{thm:class_oracle_inequality}
Assume $\P$ satisfies \ref{ass:boxdim_X} with parameters $\Cdim,\varrho$ as well as Assumptions \ref{as:TNC} and \ref{as:MNE} with parameters $C_*,q$ and $C_{**},\beta$ respectively. Then for the SVM using the hinge loss $L=\Lhinge$ we have for all $\tau>0,n>1,\lambda\in(0,1)$ and $n^{-1/\varrho}\leq \gamma\leq 1$
\begin{align}\label{eqn:class_oracle_inequality}
\begin{split}
\Rk_{L,\P}(\wideparen{f}_{D,\lambda,\gamma})-\Rk_{L,\P}^*\leq& c_1\lambda\gamma^{-d}+c_2C_{**}\gamma^\beta+c_3K\lambda^{-1/\log n}\left( \frac{\gamma^{-\varrho}}{n}\right)^\frac{q+1}{q+2}\log^{d+1}n \\
&+3C_*^\frac{q}{q+2}\left(\frac{432\tau}{n} \right)^\frac{q+1}{q+2}+30\frac{\tau}{n}
\end{split}
\end{align}
with probability $\P^n$ not less than $1-3\e^{-\tau}$, where $c_1=3^{d+2}/\Gamma(d/2+1)$,
\begin{align*}
c_2=9\frac{2^{1-\beta/2}\Gamma\left( \frac{\beta+d}{2}\right)}{\Gamma(d/2)}, \quad c_3=\max\left\{ C_*^{q/(q+1)},4\right\}\max\left\{ \Cdim,2\right\}
\end{align*}
and $K$ is a constant independent of $\P,n,\lambda$ and $\gamma$.

If in addition $\supp\,\P_X$ is contained in a $\varrho$-dimensional, infinitely differentiable, compact manifold then the factor $\log^{d+1}n$ on the right hand side of (\ref{eqn:class_oracle_inequality}) can be substituted by $\log^{\varrho+1}n$, where the constant $c_3$ then additionally depends on the curvature of the manifold.
\end{thm}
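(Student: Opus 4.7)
My plan is to derive Theorem \ref{thm:class_oracle_inequality} as a specialization of the general oracle inequality for Gaussian SVMs (Theorem \ref{thm:general_oracle_inequality} in the appendix) to the hinge loss $L=\Lhinge$. That template needs four ingredients that match the four terms in \eqref{eqn:class_oracle_inequality}: (a) an approximation element $f_0\in H_\gamma(X)$ with controlled RKHS norm, (b) a bound on its excess hinge risk, (c) a variance bound for the clipped hinge loss, and (d) an entropy number bound for the inclusion $H_\gamma(X)\hookrightarrow L_2(\P_X)$.

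For (a) and (b) I would take the Gaussian-convolution element
\begin{align*}
f_0(x)=(\pi\gamma^2)^{-d/2}\int_{\R^d} \e^{-\norm{x-y}^2/\gamma^2}\bigl(\mathbf{1}_{X_1}(y)-\mathbf{1}_{X_{-1}}(y)\bigr)\d y,
\end{align*}
which lies in $H_\gamma(\R^d)$ with the explicit Fourier-side bound $\norm{f_0}_{H_\gamma}^2\leq c_1\gamma^{-d}$; restricting to the support of $\P_X$ then feeds the regularization term $c_1\lambda\gamma^{-d}$. For the excess hinge risk I would use the closed-form identity $\Rk_{L,\P}(\wideparen f_0)-\Rk^*_{L,\P}=\int |2\eta-1|\,(1-\mathrm{sign}(2\eta-1)\,\wideparen f_0)\d\P_X$ valid for predictors clipped to $[-1,1]$. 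Splitting the integrand at $\Delta(x)=\gamma$ and using the pointwise Gaussian-tail estimate $|\mathrm{sign}(2\eta(x)-1)-f_0(x)|\lesssim \e^{-\Delta(x)^2/\gamma^2}$ reduces the bound to a shell integral of $|2\eta-1|$ over $\{\Delta(x)<r\}$; applying Assumption \ref{as:MNE} shell-by-shell and evaluating the resulting $\chi$-type radial integral via the substitution $r=\gamma s$ produces exactly the approximation term $c_2 C_{**}\gamma^\beta$ with $c_2=9\cdot 2^{1-\beta/2}\Gamma((\beta+d)/2)/\Gamma(d/2)$.

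For (c) the Tsybakov condition \ref{as:TNC} implies, by a standard argument (e.g.~\cite[Theorem 8.24]{SteinwartChristmannSVMs}), the variance bound $\E_\P(L(Y,\wideparen f(X))-L(Y,f_{L,\P}^*(X)))^2\leq V(\Rk_{L,\P}(\wideparen f)-\Rk_{L,\P}^*)^{q/(q+1)}$ with $V$ proportional to $C_*^{q/(q+1)}$. This variance exponent $\vartheta=q/(q+1)$ is precisely what yields the rate exponent $(q+1)/(q+2)=1/(2-\vartheta)$ visible in the stochastic term of \eqref{eqn:class_oracle_inequality}, together with the stand-alone Tsybakov remainder $3C_*^{q/(q+2)}(432\tau/n)^{(q+1)/(q+2)}$ arising from the Talagrand tail in the general oracle inequality. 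For (d) I would invoke Corollary \ref{cor:averaged_entropy_numbers} applied to $\mathrm{id}:H_\gamma(X)\to L_2(\P_X)$, which under \ref{ass:boxdim_X} furnishes, for every $p>0$, averaged entropy numbers of order $C_p\gamma^{-\varrho/(2p)}\log^{(d+1)/(2p)}n$; under the manifold hypothesis the same corollary improves this to a $\log^{(\varrho+1)/(2p)}n$ factor with an additional curvature dependence. Feeding these bounds into the general oracle inequality and choosing the classical $p=\log n$ trick peels off $\lambda^{-1/\log n}$ and recombines everything into the stated term $c_3K\lambda^{-1/\log n}(\gamma^{-\varrho}/n)^{(q+1)/(q+2)}\log^{d+1}n$; the lower restriction $\gamma\geq n^{-1/\varrho}$ ensures $\gamma^{-\varrho}/n\leq 1$ so that the $1/(2-\vartheta)$-power is the binding one and the remainder terms absorb into $30\tau/n$.

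The main obstacle I anticipate is not any single ingredient — each is by now classical — but the careful bookkeeping so that $c_1, c_2, c_3$ come out in the exact stated form. In particular the approximation step fuses two independent pieces of machinery (the $L_\infty$ Gaussian-tail bound for $f_0$ and the geometric noise condition \ref{as:MNE}) at the scale $\gamma$, and one must check that the constants from the $\chi$-type radial integral, the convolution-to-RKHS-norm conversion, and the clipping inequality combine with the factor $9$ that traces back to Theorem \ref{thm:general_oracle_inequality}. Keeping track of the constant $\Cdim$ through the entropy-number step and verifying that $c_3$ depends on $C_*, q$ and $\Cdim$ in the stated way is the other delicate point; for the manifold refinement it is enough to substitute the manifold-version of Corollary \ref{cor:averaged_entropy_numbers} at this last step, the rest of the argument being structurally unchanged.
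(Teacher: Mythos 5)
Your overall architecture is the same as the paper's: specialize the general oracle inequality (Theorem \ref{thm:general_oracle_inequality}) to $L=\Lhinge$, feed in the variance bound $\vartheta=q/(q+1)$, $V\propto C_*^{q/(q+1)}$ from Assumption \ref{as:TNC} via \cite[Theorem 8.24]{SteinwartChristmannSVMs}, bound the approximation error by a Gaussian mollification of the class-sign function combined with Assumption \ref{as:MNE}, and finish with $p\asymp 1/\log n$ (you write ``$p=\log n$'', but clearly mean the reciprocal) and the observation that $\gamma\geq n^{-1/\varrho}$ lets one trade the exponent $(q+1)/(q+2-p)$ for $(q+1)/(q+2)$. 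The paper simply cites \cite[Theorem 8.18]{SteinwartChristmannSVMs} for the approximation step, whereas you propose to rebuild it.

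The genuine gap is in your explicit choice of $f_0$. Taking $f_0$ to be the Lebesgue convolution of the normalized Gaussian with $\mathbf{1}_{X_1}-\mathbf{1}_{X_{-1}}$ fails in exactly the regime this paper targets: if $X=\supp\P_X$ satisfies \ref{ass:boxdim_X} with $\varrho<d$ (a manifold or fractal), then $X_1\cup X_{-1}$ is Lebesgue-null, so $f_0\equiv 0$ and the claimed approximation bound collapses. Even when $X_1\cup X_{-1}$ has positive volume, the pointwise estimate $|\mathrm{sign}(2\eta(x)-1)-f_0(x)|\lesssim \e^{-\Delta(x)^2/\gamma^2}$ is false for $x$ near the boundary of $X$: the Gaussian mass leaking outside $X_1\cup X_{-1}$ is of order one there regardless of $\Delta(x)$, and the prefactor of the RKHS-norm bound then depends on the volume of $X$ rather than giving the stated $c_1$. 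The construction behind \cite[Theorem 8.18]{SteinwartChristmannSVMs} avoids both problems by convolving with $\mathbf{1}_{T_1}-\mathbf{1}_{T_{-1}}$, where $\{T_1,T_{-1}\}$ is a Voronoi-type measurable partition of a sufficiently large ball containing $X$ with $T_{\pm1}\supset X_{\pm1}$; then any $x\in X_{1}$ has $\dist(x,T_{-1})\geq\Delta(x)/2$, which yields the Gaussian-tail bound for $\P_X$-a.e.\ $x$, and the $L_2$-norm of the convolved function is controlled by the volume of that enlarged ball, which is where $c_1=3^{d+2}/\Gamma(d/2+1)$ comes from. With this corrected $f_0$ your shell-by-shell use of Assumption \ref{as:MNE} and the radial integral do produce $c_2C_{**}\gamma^\beta$, and the rest of your argument goes through. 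Two smaller inaccuracies: the entropy input in Theorem \ref{thm:general_oracle_inequality} is the averaged empirical bound of Corollary \ref{cor:averaged_entropy_numbers} (w.r.t.\ $L_2(\D)$, not $L_2(\P_X)$), and the manifold refinement enters through the covering-number bound replacing $p^{-d-1}$ by $p^{-\varrho-1}$ inside that theorem, not through an extra logarithmic factor in the entropy corollary itself.
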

Using the theorem above, we can easily derive classification rates by choosing the hyperparameters $\lambda$ and $\gamma$ appropriately.
\begin{cor}\label{cor:class_learning_rates}
Let the assumptions of Theorem \ref{thm:ls_oracle_inequality} be satisfied and set $\gamma_n=n^{-a}$ and $\lambda_n=n^{-b}$ with
\begin{align*}
a= \frac{q+1}{\beta(q+2)+\varrho(q+1)}  \quad \text{ and } \quad  b\geq\frac{(d+\beta)(q+1)}{\beta(q+2)+\varrho(q+1)}.
\end{align*}
Then there exists a constant $C>0$ only depending on $\Cdim,C_*$ and $C_{**}$ such that for all $n>1$ and $\tau\geq 1$ we have
\begin{align*}
\Rk_{L,\P}(\wideparen{f}_{D,\lambda_n,\gamma_n})-\Rk_{L,\P}^*\leq C\tau\, n^{-\frac{\beta(q+1)}{\beta(q+2)+\varrho(q+1)}}\log^{d+1}n
\end{align*}
with probability $\P^n$ not less than $1-\e^{-\tau}$.

If in addition $\supp\,\P_X$ is contained in a $\varrho$-dimensional, infinitely differentiable, compact manifold then the factor $\log^{d+1}n$ above can be substituted by $\log^{\varrho+1}n$, where the constant $C$ then additionally depends on the curvature of the manifold.
\end{cor}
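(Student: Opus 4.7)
The plan is to substitute $\gamma_n = n^{-a}$ and $\lambda_n = n^{-b}$ with the prescribed exponents directly into the oracle inequality (\ref{eqn:class_oracle_inequality}) and then verify termwise that every summand is bounded by a constant (possibly multiplied by $\tau$ and $\log^{d+1} n$) times $n^{-\beta(q+1)/(\beta(q+2)+\varrho(q+1))}$. Before doing so I would first check that the admissibility constraint $n^{-1/\varrho}\leq \gamma_n\leq 1$ of Theorem \ref{thm:class_oracle_inequality} is satisfied: since $a>0$, we have $\gamma_n\leq 1$, and the inequality $a\leq 1/\varrho$ is equivalent to $\varrho(q+1)\leq \beta(q+2)+\varrho(q+1)$, which holds trivially. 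Similarly $\lambda_n\in(0,1)$ since $b>0$.

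Next I would handle the five summands in turn. For the approximation-error term $c_2 C_{**}\gamma^{\beta}$ the exponent is $-a\beta = -\beta(q+1)/(\beta(q+2)+\varrho(q+1))$, producing the target rate exactly. For the regularisation term $c_1\lambda\gamma^{-d}$ the exponent is $-b+ad$, and the condition $b\geq (d+\beta)(q+1)/(\beta(q+2)+\varrho(q+1))$ is precisely what is needed to ensure $-b+ad\leq -a\beta$. For the sample-error term $c_3K\lambda^{-1/\log n}(\gamma^{-\varrho}/n)^{(q+1)/(q+2)}\log^{d+1} n$ I would first observe that $\lambda^{-1/\log n} = \exp(b)$ is a constant, and then compute
\begin{align*}
\frac{q+1}{q+2}(a\varrho-1) \;=\; \frac{q+1}{q+2}\cdot\frac{-\beta(q+2)}{\beta(q+2)+\varrho(q+1)} \;=\; -\frac{\beta(q+1)}{\beta(q+2)+\varrho(q+1)},
\end{align*}
so this summand also contributes exactly the target rate (up to the $\log^{d+1}n$ factor). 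The remaining two terms depend only on $\tau/n$: the term $30\tau/n$ is trivially dominated, and for $3C_*^{q/(q+2)}(432\tau/n)^{(q+1)/(q+2)}$ it suffices to note that $(q+1)/(q+2)\geq \beta(q+1)/(\beta(q+2)+\varrho(q+1))$ (equivalent to $\varrho(q+1)\geq 0$) and that $\tau^{(q+1)/(q+2)}\leq \tau$ for $\tau\geq 1$.

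Collecting the five bounds, absorbing all numerical constants together with $\Cdim,C_*,C_{**}$ and the universal constant $K$ from Theorem \ref{thm:class_oracle_inequality} into a single constant $C$, and replacing the failure probability $3\mathrm{e}^{-\tau}$ by $\mathrm{e}^{-\tau}$ at the cost of a further constant (absorb the $\log 3$ shift into $C$ and rename $\tau$), yields the claimed high-probability bound. The addendum concerning the $\varrho$-dimensional compact manifold case follows by the same computation, using instead the refined oracle inequality with $\log^{\varrho+1}n$ supplied at the end of Theorem \ref{thm:class_oracle_inequality}.

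There is no genuine obstacle here; the corollary is a direct plug-in. The only non-automatic step is the algebraic verification that $b\geq (d+\beta)(q+1)/(\beta(q+2)+\varrho(q+1))$ is the sharp requirement ensuring that the regularisation term does not dominate, which amounts to balancing the approximation term $\gamma^{\beta}$ against $\lambda\gamma^{-d}$ at the prescribed $\gamma_n$.
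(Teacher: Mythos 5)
Your proof is correct and is exactly the route the paper intends: the paper explicitly omits this proof because it "merely consists of plugging in the specified values" into Theorem \ref{thm:class_oracle_inequality}, and your termwise verification (admissibility of $\gamma_n$, the exponent computations, the $\e^{b}$ constant from $\lambda^{-1/\log n}$, and the rescaling of $\tau$ to pass from $1-3\e^{-\tau}$ to $1-\e^{-\tau}$) fills in precisely those routine steps, matching the analogous argument given for Corollary \ref{cor:LS_learning_rates}.
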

If $\eta$ satisfies the condition in Proposition \ref{prop:MNE} for some $\alpha>0$ the exponent in the rate in the corollary above is given by $\alpha(q+1)/(\alpha(q+2)+\varrho)$. For $\alpha\leq 1$ we see, using the short remark after Proposition \ref{prop:MNE}, that by \cite[Theorem 4.1]{AudibertTsybakov_FastLearningRatesForPlugInClassifiers} the rate we get from the corollary above is optimal up to a logarithmic factor and that we achieve this optimal rate for a substantially larger class of distributions than the class, for which the exact optimal rate was established in \cite{AudibertTsybakov_FastLearningRatesForPlugInClassifiers}. The statements of Remark \ref{rmk:optimality_reg_rates} on the optimality for $\varrho<d$ also apply in this case. The following theorem shows, that the rates of the corollary above can also be achieved adaptively without knowledge on $\P$.
\begin{thm}\label{thm:class_adaptive_rates}
Let $A_n$ be a minimal $1/\log n$-net of $(0,1]$ with $1\in A_n$ and let $B_n$ be a minimal $1/\log n$-net of $(0,d]$ with $d\in B_n$. Set $\Gamma_n:=\{n^{-a}:a\in A_n\}$ and $\Lambda_n:=\{n^{-b}:b\in B_n\}$. Let the assumptions of Theorem \ref{thm:class_oracle_inequality} be satisfied and assume $\varrho\geq 1$. Then there exists a constant $C>0$ only depending on $\Cdim,C_*$ and $C_{**}$ such that the TV-SVM using $\Lambda_n$ and $\Gamma_n$ satisfies for all $n>1$ and $\tau\geq 1$,
\begin{align*}
\Rk_{L,\P}(\wideparen{f}_{D_1,\lambda_{D_2},\gamma_{D_2}})-\Rk_{L,\P}^*\leq C\tau\, n^{-\frac{\beta(q+1)}{\beta(q+2)+\varrho(q+1)}}\log^{d+1}n
\end{align*}
with probability $\P^n$ not less than $1-\e^{-\tau}$.

If in addition $\supp\,\P_X$ is contained in a $\varrho$-dimensional, infinitely differentiable, compact manifold then the factor $\log^{d+1}n$ above can be substituted by $\log^{\varrho+1}n$, where the constant $C$ then additionally depends on the curvature of the manifold.
\end{thm}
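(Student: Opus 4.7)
The plan follows the two-step pattern of Theorem~\ref{thm:LS_adaptive_rates}: (i) find $(\lambda,\gamma)\in\Lambda_n\times\Gamma_n$ close to the oracle choice from Corollary~\ref{cor:class_learning_rates} so that the fixed-parameter bound~(\ref{eqn:class_oracle_inequality}) still yields the target rate, and (ii) translate this to the TV-selected estimator through a validation-set oracle inequality that exploits the Tsybakov noise condition. Set $a^*=(q+1)/(\beta(q+2)+\varrho(q+1))$ and $b^*=(d+\beta)(q+1)/(\beta(q+2)+\varrho(q+1))$, the optimal exponents from Corollary~\ref{cor:class_learning_rates}. One checks $a^*\in(0,1]$, and a short calculation shows that the assumption $\varrho\geq 1$ is precisely what guarantees $b^*\leq d$; thus the $1/\log n$-nets $A_n\subset(0,1]$ and $B_n\subset(0,d]$ contain exponents $a,b$ with $|a-a^*|,|b-b^*|\leq 1/\log n$. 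Since $n^{\pm 1/\log n}=e^{\pm 1}$, the resulting $\gamma=n^{-a}$ and $\lambda=n^{-b}$ differ from the oracle choices by at most a factor of $e$, and $n^{-1/\varrho}\leq\gamma\leq 1$ is preserved. Substituting into Theorem~\ref{thm:class_oracle_inequality} (and absorbing $\lambda^{-1/\log n}\leq e^d$ into the constant) gives, with probability at least $1-3e^{-\tau}$ over the draw of $D_1$,
\[
\Rk_{L,\P}(\wideparen{f}_{D_1,\lambda,\gamma})-\Rk_{L,\P}^*\leq C_1\tau\, n^{-\beta(q+1)/(\beta(q+2)+\varrho(q+1))}\log^{d+1}n.
\]

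For the validation step I would invoke a TV-SVM oracle inequality analogous to \cite[Theorem~7.24]{SteinwartChristmannSVMs} but in the refined form mentioned in the introduction, so that its concentration term scales with $\log|\Lambda_n\times\Gamma_n|$ rather than linearly in the grid size. Under Assumption~\ref{as:TNC}, the clipped hinge loss obeys a variance bound of the form $\E(\Lhinge(y,\wideparen{f}(x))-\Lhinge(y,f^*_{L,\P}(x)))^2\leq V(\Rk_{L,\P}(\wideparen{f})-\Rk^*_{L,\P})^{q/(q+1)}$ with $V$ depending only on $C_*$ and $q$, cf.~\cite[Theorem~8.24]{SteinwartChristmannSVMs}. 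Combining this variance bound with Bernstein's inequality on the validation set of size $n-m\geq n/2$ and taking a union bound over $\Lambda_n\times\Gamma_n$ yields
\[
\Rk_{L,\P}(\wideparen{f}_{D_1,\lambda_{D_2},\gamma_{D_2}})-\Rk_{L,\P}^*\leq 2\min_{(\lambda,\gamma)\in\Lambda_n\times\Gamma_n}\bigl(\Rk_{L,\P}(\wideparen{f}_{D_1,\lambda,\gamma})-\Rk_{L,\P}^*\bigr)+C_2\Bigl(\frac{\tau+\log(|\Lambda_n||\Gamma_n|)}{n}\Bigr)^{(q+1)/(q+2)}.
\]
Since $|\Lambda_n||\Gamma_n|=O(\log^2 n)$, the validation penalty is of order $\tau^{(q+1)/(q+2)}(\log\log n/n)^{(q+1)/(q+2)}$ and is dominated by the first-step bound. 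A union bound on the two failure events completes the proof, and the manifold refinement follows verbatim by using the $\log^{\varrho+1}n$ variant of Theorem~\ref{thm:class_oracle_inequality}.

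The delicate point is the fast-rate TV oracle inequality itself: one must show that peeling/union-bounding over the grid only contributes the exponent-$(q+1)/(q+2)$ penalty $(\log|\Lambda_n\times\Gamma_n|/n)^{(q+1)/(q+2)}$, rather than the slow $(\log|\Lambda_n\times\Gamma_n|/n)^{1/2}$ yielded by a naive Hoeffding bound on the validation set. Carefully propagating the hinge-loss variance bound through the peeling argument is the classification counterpart of the adaptivity mechanism used in the proof of Theorem~\ref{thm:LS_adaptive_rates}, and this, together with the logarithmic grid size, is exactly where the refined appendix oracle inequality mentioned in the introduction does the real work.
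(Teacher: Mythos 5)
Your proposal is correct and follows essentially the same route as the paper: use $\varrho\ge 1$ to see that $\Gamma_n$ and $\Lambda_n$ contain admissible exponents (note that $\varrho\ge1$ is needed for $a^*\le 1$ as well, not only for $b^*\le d$; the paper simply takes $b=d\in B_n$), apply Theorem~\ref{thm:class_oracle_inequality} to that grid point on $D_1$, and control the validation step by a finite-class oracle inequality whose penalty is $\bigl((\tau+\log(1+|\Lambda_n\times\Gamma_n|))/n\bigr)^{(q+1)/(q+2)}$ thanks to the hinge-loss variance bound under Assumption~\ref{as:TNC}, and which is dominated by the target rate. The ``delicate'' fast-rate validation inequality you plan to derive via Bernstein and peeling does not need a new proof: it is exactly \cite[Theorem 7.2]{SteinwartChristmannSVMs} applied with $\vartheta=q/(q+1)$ and $V=6C_*^{q/(q+1)}$, i.e.\ the same ERM-over-a-finite-grid result already invoked in the proof of Theorem~\ref{thm:LS_adaptive_rates}, which the paper's proof of Theorem~\ref{thm:class_adaptive_rates} reuses verbatim.
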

The statements of Remark \ref{rmk:validation_set} on the size of the candidate set $\Lambda_n$ and the need for the constraint $\varrho\geq 1$ apply one-to-one for Theorem \ref{thm:class_adaptive_rates}.

We again conclude the section with a comparison to existing results on classification rates under a low intrinsic dimensionality assumption similar to ours, although they are much rarer than for regression.
\begin{description}
\item[SVMs.] In \cite{YeZhou_SVMLearningAndLpApproximationByGaussianOnRiemannianManifolds} the authors assume that the input space $X\subset\R^d$ is a compact, connected, smooth $\varrho$-dimensional manifold without boundary and $\P_X$ is the normalized surface measure on $X$. They further assume, that $\mathrm{sgn}(2\eta-1)$ is contained in the interpolation space $(L_1(X),W^{2,1}(X))_{\theta,\infty}$ for some $\theta\in(0,1]$, where $W^{2,1}(X)$ is a Sobolev-space on the manifold $X$ and derive the learning rate $(\log^2(n)/n)^{\theta/(6\theta+\varrho)}$ for Gaussian SVMs. Of course, such a regularity assumption for a discrete-valued function is very restrictive, especially for small values of $\varrho$. Exemplarily, for $\varrho=1$ and $\theta\geq 1/2$ their assumptions actually imply the pathological case $\mathrm{sgn}(2\eta-1)\equiv 1$ or $\mathrm{sgn}(2\eta-1)\equiv -1$, since by the embedding theorem in \cite[7.4.2 (iv)]{Triebel_TheoryOfFunctionSpacesII} the space $(L_1(X),W^{2,1}(X))_{\theta,\infty}$ then consists of continuous functions. In addition, their fastest possible rate is given by $n^{-1/7}$, while we derive learning rates up to $n^{-1}$.
\item[Dyadic Decision Trees.] In \cite{ScottNowak_MinimaxOptimalClassificationWithDyadicDecisionTrees} dyadic decision trees are considered. They assume, that for a partition $\mathcal{P}_m$ of the input space $X=[0,1]^d$ into cubes of sidelength $1/m$, where $m$ is a dyadic integer, every $A\in \mathcal{P}_m$ satisfies $\P_X(A)\leq c_0 m^{-\varrho}$. Additionally they assume, that the number of cubes in $\mathcal{P}_m$, that intersect the decision boundary $\{x\in X:\eta(x)=1/2\}$ is bounded by $c_1 m^{\varrho -1}$, that is they impose an assumption similar to \ref{ass:boxdim_X} on the decision boundary. They derive a learning rate of $(\log(n)/n)^{1/\varrho}$. Remarkably, the optimal choice of their only hyperparameter, the depth of the tree, does not depend on $\varrho$.
\end{description}

\begin{appendix}
\section{Proofs and Auxiliary Results}\label{sec:appendix}
In the following we present the proofs of the statements in the main sections of this work. We first state and prove some auxiliary results on entropy numbers for Gaussian RKHSs in Section \ref{sec:entropy_estimates}. Based on these results we derive an oracle inequality for Gaussian SVMs for generic loss functions under assumption \ref{ass:boxdim_X} in Section \ref{sec:general_oracle_inequality}. Sections \ref{sec:proofs_regression} and \ref{sec:proofs_class} contain the proofs of Sections \ref{sec:learning_rates_regression} and \ref{sec:learning_rates_classification}, respectively.
\subsection{Entropy Estimates}\label{sec:entropy_estimates}
Our main tool for bounding the statistical error is based on the entropy numbers of $H_\gamma(X)$ with respect to the empirical $L_2(\D)$-norm, where $\D:=n^{-1}\sum_{i=1}^n \delta_{x_i}$ is the empirical measure associated to a sample $D=(x_1,\ldots,x_n)\in X^n$ drawn from $\P_X^n$. To this end, we first give a definition of entropy numbers, which are the inverse concept of covering numbers.
\begin{dfn}\label{dfn:entropy_numbers}
Given normed spaces $E,F$ and a bounded, linear operator $T:E\rightarrow F$, for $i\in\N$ the $i$-th dyadic entropy number of $T$ is defined as
\begin{align*}
e_i(T):=\inf \left\{ \varepsilon>0:\exists x_1,\ldots,x_{2^{i-1}}\in F \text{ such that } TB_E\subset\bigcup_{j=1}^{2^{i-1}}(x_j+\varepsilon B_F)\right\}.
\end{align*}
\end{dfn}
As one would expect, there is a close connection between entropy numbers and covering numbers. For example we have $e_i(T)\in\O(i^{-1/q})$ as $i\rightarrow\infty$ for some $q>0$ if and only if $\log\cN(T,\varepsilon)\in\O(\varepsilon^q)$ as $\varepsilon\rightarrow0$, cf. \cite[Lemma 6.21 and Exercise 6.8]{SteinwartChristmannSVMs}.\\
We are interested in bounding the averaged entropy numbers of the embedding $\id:H_\gamma(X)\rightarrow L_2(\D)$. More precisely, we need a polynomial bound of the form
\begin{align}
\E_{D\sim \P_X^n} e_i(H_\gamma(X)\rightarrow L_2(\D)) \leq a\, i^{-\frac{1}{2p}} \quad\text{ for all }\quad i\in\N \label{eqn:bound_average_entropy_numbers}
\end{align}
for some constants $a>0$ and $p\in(0,1)$. A bound of the form (\ref{eqn:bound_average_entropy_numbers}) in turn is a tool for bounding Rademacher averages of $H_\gamma(X)$ using a standard symmetrization procedure and Dudley's chaining, see \cite[Section 7.3]{SteinwartChristmannSVMs}. We will derive (\ref{eqn:bound_average_entropy_numbers}) from $\ell_\infty(X)$-covering numbers of the unit ball in $H_\gamma(X)$, where $\ell_\infty(X)$ denotes the space of bounded functions on $X$ equipped with the sup-norm $\norm{\cdot}_\infty$. Covering numbers for Gaussian RKHSs with respect to $\norm{\cdot}_\infty$ are well understood, for example \cite[Theorem 3]{KuehnEntrop} showed that
\begin{align}
\log \cN(\id:H_\gamma([0,1]^d)\rightarrow\ell_\infty([0,1]^d),\varepsilon)\asymp \dfrac{\left( \log \frac{1}{\varepsilon} \right)^{d+1}}{\left(\log\log \frac{1}{\varepsilon}\right)^d} \quad \text{ as } \quad \varepsilon \rightarrow 0.\label{eqn:Kuehn_covering_numbers}
\end{align}
However, we will rely on the slightly suboptimal bound
\begin{align}
\log \cN(\id:H_\gamma([0,1]^d)\rightarrow\ell_\infty([0,1]^d),\varepsilon)\lesssim \log^{d+1} \frac{1}{\varepsilon}\quad \text{ as } \quad \varepsilon \rightarrow 0,\label{eqn:Vaart_covering_numbers}
\end{align}
since it is very hard to make use of the extra double logarithmic factor in (\ref{eqn:Kuehn_covering_numbers}). One can show that (\ref{eqn:Vaart_covering_numbers}) actually implies super-polynomial decay of $\E_{D\sim\P_X^n} e_i(\id:H_\gamma(X)\rightarrow L_2(\D))$, meaning (\ref{eqn:bound_average_entropy_numbers}) is fulfilled for arbitrarily small $p>0$ with a constant $a$ necessarily depending on $p$. For our statistical analysis we need to explicitly track the dependence of $a$ on $p$ and $\gamma$. This is done in the following theorem and its corollary, whose proofs can be found later in this subsection after we have established some auxiliary lemmas.
\begin{thm}\label{thm:l_infty_entropy_numbers}
There exists a universal constant $K$ only depending on $d$, such that
\begin{align*}
e_i(\id:H_\gamma(X)\rightarrow \ell_\infty(X))\leq K^{\frac{1}{2p}}p^{-\frac{d+1}{2p}}\cN_{\ell_\infty^d}(X,\gamma)^{\frac{1}{2p}}\,i^{-\frac{1}{2p}}
\end{align*}
holds for all $i\in\N$, $p\in(0,1)$ and $\gamma>0$.
\end{thm}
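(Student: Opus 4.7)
The plan is to localize the covering number bound (\ref{eqn:Vaart_covering_numbers}) to small pieces of $X$, glue the local covers together in the sup-norm, and then convert the resulting super-polynomial decay of the entropy numbers into the claimed polynomial bound with explicit $p$-dependence.

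First, I exploit the scaling property of Gaussian kernels: the map $f\mapsto f(\gamma\,\cdot)$ is an isometry from $H_\gamma(X)$ onto $H_1(X/\gamma)$ that also identifies the embeddings into $\ell_\infty(X)$ and $\ell_\infty(X/\gamma)$. Writing $N:=\cN_{\ell_\infty^d}(X,\gamma)=\cN_{\ell_\infty^d}(X/\gamma,1)$, this reduces the problem to the case $\gamma=1$.

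Second, I cover $\tilde X:=X/\gamma$ by $N$ closed $\ell_\infty$-balls $Q_1,\ldots,Q_N$ of radius $1$. For $f$ in the unit ball of $H_1(\tilde X)$, restriction to $\tilde X\cap Q_j$ followed by norm-minimizing extension produces a representative $\tilde f_j$ in the unit ball of $H_1(Q_j)$ with $\tilde f_j|_{\tilde X\cap Q_j}=f|_{\tilde X\cap Q_j}$. Applying (\ref{eqn:Vaart_covering_numbers}) on each $Q_j$ (after an affine identification with a fixed reference cube, which only alters the Gaussian width by a factor depending on $d$) and taking the product of the individual $\varepsilon$-nets yields, via $\norm{g}_{\ell_\infty(\tilde X)}\leq\max_j\norm{g}_{\ell_\infty(Q_j)}$,
\begin{align*}
\log\cN\bigl(\id:H_1(\tilde X)\to\ell_\infty(\tilde X),\varepsilon\bigr)\leq N\,C_d\,(1+\log(1/\varepsilon))^{d+1}
\end{align*}
for some constant $C_d>0$ and all $\varepsilon\in(0,1]$.

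Third, I convert to entropy numbers using the standard equivalence $e_i(T)\leq\varepsilon$ whenever $\log\cN(T,\varepsilon)\leq (i-1)\log 2$. Inverting the previous display gives an exponential bound of the form $e_i\leq e\cdot\exp\bigl(-c_d((i-1)/N)^{1/(d+1)}\bigr)$ once $i$ exceeds a threshold proportional to $N$; for smaller indices the trivial bound $e_i\leq 1$ will be absorbed into the final constant. To obtain the polynomial form of the theorem, I would show $i\,e_i^{2p}\leq K\,p^{-(d+1)}N$ for all $p\in(0,1)$. Substituting $t:=c_d((i-1)/N)^{1/(d+1)}$ reduces this to the one-variable maximization
\begin{align*}
\sup_{t\geq 0}\, t^{d+1}\exp(-2pt)=\left(\frac{d+1}{2pe}\right)^{d+1},
\end{align*}
which produces exactly the factor $p^{-(d+1)}$. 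Taking $(2p)$-th roots yields the claim.

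The main technical obstacles I expect are, first, verifying rigorously that the local covers glue into a global sup-norm cover without incurring an extra multiplicative $\log N$ loss, and second, collapsing the various constants—from the local covering bound, from the affine rescaling of $Q_j$ to a reference cube, and from the calculus optimization—into a single constant $K$ depending only on $d$, so that the prefactor $K^{1/(2p)}p^{-(d+1)/(2p)}$ emerges with exactly the clean form claimed.
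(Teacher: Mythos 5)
Your argument is correct and is essentially the paper's own proof: localize $X$ into $\cN_{\ell_\infty^d}(X,\gamma)$ cubes of side $\asymp\gamma$, use additivity of log-covering numbers under sup-norm gluing (the paper's Lemma \ref{lem:linfty_covering_numbers_partition} and Corollary \ref{cor:aux_entropy_bound}; the overlapping-ball issue you flag is resolved exactly as there, by passing to a partition that assigns each point to its nearest net center), invoke the $\log^{d+1}(1/\varepsilon)$ covering bound on the reference cube, and finish with an elementary optimization yielding the factor $p^{-(d+1)}$. The only differences are cosmetic: you rescale by $\gamma$ before localizing and optimize at the entropy-number stage via $\sup_{t\geq 0}t^{d+1}\e^{-2pt}$, whereas the paper localizes first and converts the polylogarithmic covering bound into a polynomial one via $\sup_{\varepsilon}\varepsilon^{2p}\log^{d+1}(2/\varepsilon)$ before applying the standard covering-to-entropy conversion.
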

Combining Theorem \ref{thm:l_infty_entropy_numbers} with assumption \ref{ass:boxdim_X} easily gives a bound of the form (\ref{eqn:bound_average_entropy_numbers}) as desired.
\begin{cor}\label{cor:averaged_entropy_numbers}
Let $\P$ satisfy \ref{ass:boxdim_X}. Then there exists a constant $K$ only depending on $d$, such that the bound
\begin{align*}
\E_{D\sim\P_X^n}e_i( \id:H_\gamma(X)\rightarrow L_2(\D)) \leq (\Cdim K)^{\frac{1}{2p}}p^{-\frac{d+1}{2p}}\gamma^{-\frac{\varrho}{2p}}\,i^{-\frac{1}{2p}}
\end{align*}
holds for all $i\in\N$, $p\in(0,1)$ and $\gamma\in(0,1)$.
\end{cor}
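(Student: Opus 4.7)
The plan is to reduce the $L_2(\D)$-entropy numbers to the already understood $\ell_\infty$-entropy numbers by factoring the identity $H_\gamma(X) \to L_2(\D)$ through the restriction to $Z := \supp\,\P_X$, and then invoking Theorem \ref{thm:l_infty_entropy_numbers} together with \ref{ass:boxdim_X}. The point of passing to $Z$ is that \ref{ass:boxdim_X} controls $\cN_{\ell_\infty^d}(Z,\gamma)$ (not $\cN_{\ell_\infty^d}(X,\gamma)$), so this restriction step is what brings the intrinsic dimension $\varrho$ into the bound.

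First, I would verify the following almost sure factorization of linear contractions. Since $D \sim \P_X^n$ lies in $Z^n$ almost surely, $\D$ is a probability measure concentrated on $Z$, so the map $\iota:\ell_\infty(Z)\to L_2(\D)$, $g\mapsto g$, satisfies $\|\iota\|\leq 1$. The restriction map $R:H_\gamma(X)\to H_\gamma(Z)$, $f\mapsto f|_Z$, is likewise a contraction; this is a standard consequence of RKHS restriction theory applied to the Gaussian kernel, since $H_\gamma(Z)$ is the RKHS of $k_\gamma|_{Z\times Z}$ and coincides with $\{f|_Z : f\in H_\gamma(X)\}$ endowed with the quotient norm. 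Writing $\id_Z:H_\gamma(Z)\to\ell_\infty(Z)$ for the inclusion, I obtain
\[
\bigl(\id:H_\gamma(X)\to L_2(\D)\bigr) = \iota\circ\id_Z\circ R
\]
$\P_X^n$-almost surely. The ideal (multiplicativity) property of entropy numbers then yields
\[
e_i\bigl(\id:H_\gamma(X)\to L_2(\D)\bigr)\leq \|\iota\|\cdot e_i(\id_Z)\cdot\|R\|\leq e_i\bigl(\id:H_\gamma(Z)\to\ell_\infty(Z)\bigr)
\]
almost surely.

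Second, I would apply Theorem \ref{thm:l_infty_entropy_numbers} with $Z$ in place of $X$ to estimate the right-hand side by
\[
K^{1/(2p)}\,p^{-(d+1)/(2p)}\,\cN_{\ell_\infty^d}(Z,\gamma)^{1/(2p)}\,i^{-1/(2p)},
\]
and then invoke \ref{ass:boxdim_X}, which gives $\cN_{\ell_\infty^d}(Z,\gamma)\leq \Cdim\,\gamma^{-\varrho}$ for $\gamma\in(0,1)$. Plugging this in produces exactly the asserted pointwise bound $(\Cdim K)^{1/(2p)}\,p^{-(d+1)/(2p)}\,\gamma^{-\varrho/(2p)}\,i^{-1/(2p)}$, and since this upper estimate is deterministic in $D$, taking expectation over $D\sim\P_X^n$ gives the claimed inequality.

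There is no genuine obstacle in this argument; everything is standard once the correct factorization is identified. The only point that deserves a line of justification is the contractivity of the restriction map $R$, which is the single place where Gaussian kernel theory enters beyond the application of Theorem \ref{thm:l_infty_entropy_numbers}.
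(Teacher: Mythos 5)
Your proposal is correct and is essentially identical to the paper's proof: both factor $\id:H_\gamma(X)\to L_2(\D)$ through the restriction to $\supp\,\P_X$ and the embedding into $\ell_\infty(\supp\,\P_X)$, use the contractivity of these maps (Corollary \ref{cor:restricted_kernel} gives the contractivity of the restriction), apply Theorem \ref{thm:l_infty_entropy_numbers}, insert \ref{ass:boxdim_X}, and take expectation of the resulting deterministic bound.
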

Note, that the behavior of the bandwidth $\gamma$ in the entropy estimate above now depends on $\varrho$ instead of $d$, compared to e.g.~\cite[Lemma 4.5]{VaartZantenEntrop}, which is a crucial improvement.

As already mentioned, the proofs of Theorem \ref{thm:l_infty_entropy_numbers} and Corollary \ref{cor:averaged_entropy_numbers} need some preparatory lemmas on kernels and their reproducing kernel Hilbert spaces. To this end, we recall some basic definitions and facts, which can be found in \cite[Section 4]{SteinwartChristmannSVMs}. Given a non-empty set $X$, a symmetric function $k:X\times X\rightarrow\R$ is a \textit{kernel}, if it is positive definite, that is if for all $n\in\N$ and all choices $x_1,\ldots,x_n\in X$ and $\alpha_1,\ldots,\alpha_n\in\R$ we have
\begin{align*}
\sum_{i=1}^n \sum_{j=1}^n \alpha_i \alpha_j k(x_j,x_i)\geq 0.
\end{align*}
A Hilbert space $H$ of functions $f:X\rightarrow\R$ is called a \textit{reproducing kernel Hilbert space} (RKHS), if the evaluation functional $H\rightarrow\R$ defined by $f\mapsto f(x)$ is continuous for every $x\in X$. Every RKHS $H$ has a unique \textit{reproducing kernel} $k: X\times X\rightarrow \R$, i.e.~$k(\cdot,x)\in H$ for all $x\in X$ and
\begin{align}
\langle f,k(\cdot,x)\rangle_H=f(x) \quad \text{for all}\quad f\in H, x\in X, \label{eqn:reproducing_property}
\end{align}
and $k$ is a kernel in the sense above. Property (\ref{eqn:reproducing_property}) is called the \textit{reproducing property}. Conversely, for every  kernel $k$ there exists a unique RKHS $H$ for which it is the reproducing kernel. An equivalent definition is that $k:X\times X\to\R$ is a kernel, if there exists a Hilbert space $H_0$ and a map $\Phi:X\to H_0$ such that $k(x,y)=\langle \Phi(x),\Phi(y) \rangle_{H_0}$ for all $x,y\in X$. In this context $H_0$ is called a \textit{feature space} and $\Phi$ a \textit{feature map}. Note that we can always pick the RKHS $H$ as a feature space via the \textit{canonical feature map} $\Phi:X\rightarrow H$ defined by $\Phi(x):=k(x,\cdot)$ and that the inner product on $H$ is entirely determined by the kernel.
\begin{lem}\label{lem:transformed_kernel}
Let $k$ be a kernel on $X$ with RKHS $H$ and let $\psi:Y\rightarrow X$ be a map. Then $k_\psi(\cdot,\cdot):=k(\psi(\cdot),\psi(\cdot))$ is a kernel on $Y$ with RKHS $H_\psi=\{f\circ\psi:f\in H\}$ and the map $V:H\rightarrow H_\psi$ defined by $ f\mapsto f\circ\psi$ is a metric surjection. The norm in $H_\psi$ can be computed by
\begin{align*}
\norm{g}_{H_\psi}=\inf\{\norm{f}_H: f \text{ with } g=f\circ \psi \}.
\end{align*}
If $\psi$ is bijective, then $V$ is an isometric isomorphism.
\end{lem}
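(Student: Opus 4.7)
The plan is to reduce everything to the standard characterisation of an RKHS in terms of an arbitrary feature map, which is essentially Theorem 4.21 of Steinwart--Christmann (or a direct consequence of the Moore--Aronszajn construction): if $H_0$ is a Hilbert space and $\Phi_0\colon Z\to H_0$ is a map, then the associated kernel $k_0(z,z'):=\langle\Phi_0(z),\Phi_0(z')\rangle_{H_0}$ has RKHS
\begin{align*}
H_0^{\mathrm{rk}}=\{z\mapsto\langle w,\Phi_0(z)\rangle_{H_0}:w\in H_0\},
\end{align*}
with the norm given by $\|g\|=\inf\{\|w\|_{H_0}:g=\langle w,\Phi_0(\cdot)\rangle_{H_0}\}$, and the linear map $w\mapsto\langle w,\Phi_0(\cdot)\rangle_{H_0}$ is a metric surjection $H_0\twoheadrightarrow H_0^{\mathrm{rk}}$.

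First I would take the canonical feature map $\Phi\colon X\to H$, $\Phi(x)=k(\cdot,x)$, and set $\Phi_\psi:=\Phi\circ\psi\colon Y\to H$. By construction
\begin{align*}
\langle\Phi_\psi(y),\Phi_\psi(y')\rangle_H=k(\psi(y),\psi(y'))=k_\psi(y,y'),
\end{align*}
so $k_\psi$ is a kernel with feature space $H$ and feature map $\Phi_\psi$. Applying the characterisation above with $H_0=H$ and $\Phi_0=\Phi_\psi$, and using the reproducing property $\langle w,\Phi(\psi(y))\rangle_H=w(\psi(y))$, the typical element $y\mapsto\langle w,\Phi_\psi(y)\rangle_H$ equals $w\circ\psi$. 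Hence
\begin{align*}
H_\psi=\{w\circ\psi:w\in H\},\qquad \|g\|_{H_\psi}=\inf\{\|w\|_H:g=w\circ\psi\},
\end{align*}
and the map $V\colon H\to H_\psi$, $w\mapsto w\circ\psi$, is exactly the metric surjection produced by the theorem.

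For the second assertion, assume $\psi$ is bijective and suppose $Vw=w\circ\psi=0$. Since $\psi$ is surjective, $w$ vanishes on all of $X$, so $w=0$; thus $V$ is injective. The infimum in the norm formula is then taken over the singleton $\{w\}$ where $w$ is the unique preimage of $g$, giving $\|g\|_{H_\psi}=\|w\|_H$. Combined with surjectivity, this shows $V$ is an isometric isomorphism.

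There is no real obstacle here; the only thing to be slightly careful about is formulating the feature-map characterisation of the RKHS in a form that yields both the norm formula and the metric-surjection statement simultaneously, rather than proving each of them from scratch. Once that auxiliary result is in hand, everything reduces to verifying $\langle w,\Phi_\psi(y)\rangle_H=(w\circ\psi)(y)$ via the reproducing property.
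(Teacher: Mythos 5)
Your proof is correct and takes essentially the same route as the paper: compose the canonical feature map with $\psi$, verify $\langle\Phi_\psi(y),\Phi_\psi(y')\rangle_H=k_\psi(y,y')$, and invoke the feature-map characterisation of the RKHS (Theorem 4.21 in Steinwart--Christmann) to obtain the RKHS, the norm formula and the metric surjection simultaneously. The only (minor) divergence is the last assertion, where the paper applies the same result to $\psi^{-1}$, whereas you argue directly that $V$ is injective so the infimum is attained at the unique preimage; both are valid, and your argument in fact only uses surjectivity of $\psi$.
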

\begin{proof}
Let $\Phi:X\rightarrow H, x\mapsto k(x,\cdot)$ be the canonical feature map of $k$ and define $\Phi_\psi:Y\rightarrow H, y\mapsto \Phi(\psi(y))$. Then by construction we have $\langle \Phi_\psi(y),\Phi_\psi(y') \rangle_H=k_\psi(y,y')$ for all $y,y'\in Y$, that is, $\Phi_\psi$ is a feature map of $k_\psi$. The first two assertions now follow from \cite[Theorem 4.21]{SteinwartChristmannSVMs}. For the third assertion additionally apply this result on $\psi^{-1}$.
\end{proof}
\begin{cor}\label{cor:restricted_kernel}
Let $k$ be a kernel on $X\subset\R^d$, $H$ its RKHS and $Y\subset X$. Then $H|_Y:=\{f|_Y:f\in H\}$ is the RKHS of $k|_{Y\times Y}$ and the restriction $H\rightarrow H|_Y$ is a metric surjection.
\end{cor}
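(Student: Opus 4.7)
The statement is an immediate corollary of Lemma \ref{lem:transformed_kernel}, so my plan is simply to specialize that lemma to the inclusion map. Specifically, let $\iota: Y \hookrightarrow X$ denote the inclusion, so that $\iota(y)=y$ for all $y\in Y$. Applying Lemma \ref{lem:transformed_kernel} with $\psi=\iota$ gives the claim directly, since the transformed kernel $k_\iota(y,y')=k(\iota(y),\iota(y'))=k(y,y')$ coincides with the restriction $k|_{Y\times Y}$, and the associated RKHS is $H_\iota=\{f\circ\iota:f\in H\}=\{f|_Y:f\in H\}=H|_Y$.

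For the second assertion, the map $V:H\to H_\iota$ of Lemma \ref{lem:transformed_kernel} is $f\mapsto f\circ\iota=f|_Y$, which is exactly the restriction operator $H\to H|_Y$, and it is a metric surjection by the lemma. There is really no obstacle to overcome here; the only subtlety is the (essentially notational) check that $\iota$ is a well-defined map $Y\to X$, which holds because $Y\subset X$. No need to separately invoke bijectivity, since $\iota$ need not be surjective in general and we only require the metric surjection conclusion rather than an isometric isomorphism.
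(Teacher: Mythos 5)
Your proposal is correct and matches the paper's own proof, which likewise obtains the corollary by applying Lemma \ref{lem:transformed_kernel} with $\psi$ the inclusion $Y\hookrightarrow X$. Your additional remarks (identifying $k_\iota$ with $k|_{Y\times Y}$ and $V$ with the restriction operator) are just the routine verifications the paper leaves implicit.
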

\begin{proof}
This follows from Lemma \ref{lem:transformed_kernel} with $\psi:Y\rightarrow X$ being the inclusion.
\end{proof}
A kernel $k:\R^d\times\R^d \rightarrow\R$ is called radial if there exists a function $\kappa:[0,\infty)\rightarrow\R$ such that $k(x,y)=\kappa(\norm{x-y})$ for all $x,y\in\R^d$. Radial kernels are a special case of translation invariant kernels, which by Bochner's theorem \cite[Theorem IX.9]{ReedSimon_MethodsOfModernMathematicalPhysicsII} are characterized as the inverse Fourier transform of a finite Borel measure on $\R^d$. Similarly, by Schoenberg's theorem \cite{Schoenberg_MetricSpacesAndCompletelyMonotoneFunctions} $\kappa:[0,\infty)\rightarrow\R$ defines a radial kernel on $\R^d$ via $k(\cdot,\cdot)=\kappa(\norm{\cdot-\cdot})$ for every $d\in\N$, if and only if there exists a finite Borel measure $\mu$ on $[0,\infty)$ such that
\begin{align*}
\kappa(t)=\int_0^\infty \e^{-xt^2}\d\mu(x)
\end{align*}
for all $t\in[0,\infty)$, i.e.~radial kernels are mixtures of Gaussians. The following corollary shows, that RKHSs of radial kernels are in some sense translational and rotational invariant.
\begin{cor}\label{cor:shifted_kernel}
Let $k$ be a radial kernel on $\R^d$ and for $X\subset \R^d$ denote the restriction of $k$ onto $X\times X$ by $k_X$  and its RKHS by $H(X)$. Fix an $a\in\R^d$ and an orthogonal matrix $U\in \R^{d\times d}$. Then the operator $T:H(X)\rightarrow H(a+UX)$ defined by $ Tf(x)=f(U^{-1}(x-a))$ is well-defined and an isometric isomorphism.
\end{cor}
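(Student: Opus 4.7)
The plan is to reduce the statement to Lemma \ref{lem:transformed_kernel} by choosing the right reparametrization map. Concretely, I would define $\psi : a+UX \to X$ by $\psi(x) := U^{-1}(x-a)$. This map is a bijection, with inverse $y \mapsto a+Uy$, so Lemma \ref{lem:transformed_kernel} (its last assertion) will give an isometric isomorphism $V : H(X) \to H(a+UX)_\psi$, where $H(a+UX)_\psi$ denotes the RKHS of the pulled-back kernel $k_\psi(x,y) = k(\psi(x),\psi(y))$ on $a+UX$. Moreover $Vf = f\circ\psi$, which matches the definition of $T$.

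The only thing left to check is that the transformed kernel $k_\psi$ coincides with the restriction $k_{a+UX}$, so that $H(a+UX)_\psi = H(a+UX)$ with the same norm. This is where the radial structure of $k$ together with the orthogonality of $U$ comes in: writing $k(x,y) = \kappa(\norm{x-y})$, we get for $x,y\in a+UX$
\begin{align*}
k_\psi(x,y) = k\bigl(U^{-1}(x-a),\, U^{-1}(y-a)\bigr) = \kappa\bigl(\norm{U^{-1}(x-y)}\bigr) = \kappa(\norm{x-y}) = k(x,y),
\end{align*}
since $U^{-1}$ is orthogonal and hence norm-preserving. Thus $k_\psi = k|_{(a+UX)\times(a+UX)} = k_{a+UX}$, so uniqueness of the RKHS yields $H(a+UX)_\psi = H(a+UX)$ as Hilbert spaces.

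Putting these pieces together, $T = V$ is well-defined and is an isometric isomorphism from $H(X)$ onto $H(a+UX)$, which is exactly the claim. No step is truly an obstacle here; the only subtlety to flag is that the identification $H(a+UX)_\psi = H(a+UX)$ relies on uniqueness of the RKHS associated to a given kernel, and on the fact that radial kernels depend only on Euclidean distances, making the orthogonality assumption on $U$ essential (a general invertible $U$ would not work).
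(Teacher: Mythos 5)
Your proposal is correct and follows essentially the same route as the paper: the paper's proof also applies Lemma \ref{lem:transformed_kernel} with $\psi(x)=U^{-1}(x-a)$ and the observation $k_X(\psi(\cdot),\psi(\cdot))=k_{a+UX}(\cdot,\cdot)$, which you have simply verified in more explicit detail using radiality and orthogonality.
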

\begin{proof}
This follows from Lemma \ref{lem:transformed_kernel} with the map $\psi: a+UX\rightarrow X$ defined by $ x\mapsto U^{-1}(x-a)$, since $k_X(\psi(\cdot),\psi(\cdot))=k_{a+UX}(\cdot,\cdot)$.
\end{proof}
\begin{lem}\label{lem:linfty_covering_numbers_partition}
Let $k$ be a kernel on $X$, $H$ be its RKHS, and $X_1,\ldots,X_N\subset X$ pairwise disjoint subsets with $X_1\cup\ldots\cup X_N=X$. Then for all $\varepsilon>0$ we have
\begin{align*}
\cN_{\ell_\infty(X)}(B_H,\varepsilon)\leq \prod_{k=1}^N \cN_{\ell_\infty(X_k)}(B_{H|_{X_k}},\varepsilon).
\end{align*}
\end{lem}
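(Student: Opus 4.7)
The plan is to construct a piecewise $\varepsilon$-net for $B_H$ in $\ell_\infty(X)$ by gluing together $\varepsilon$-nets of the restricted unit balls on each $X_k$. Since the $X_k$ partition $X$, the $\ell_\infty(X)$-norm of any function decomposes as the maximum of the $\ell_\infty(X_k)$-norms of its restrictions, which is what makes such a product construction work.

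First I would invoke Corollary \ref{cor:restricted_kernel} to note that for each $k$ the restriction map $R_k\colon H\to H|_{X_k}$, $f\mapsto f|_{X_k}$, is a metric surjection, so in particular $R_k(B_H)\subset B_{H|_{X_k}}$. Next, for each $k$ fix $n_k:=\cN_{\ell_\infty(X_k)}(B_{H|_{X_k}},\varepsilon)$ and choose centers $g_1^{(k)},\ldots,g_{n_k}^{(k)}\in\ell_\infty(X_k)$ forming an $\varepsilon$-net of $B_{H|_{X_k}}$ in $\ell_\infty(X_k)$. For every tuple $\mathbf j=(j_1,\ldots,j_N)$ with $1\leq j_k\leq n_k$, define $h_{\mathbf j}\colon X\to\R$ by
$$h_{\mathbf j}(x):=g_{j_k}^{(k)}(x)\quad\text{whenever }x\in X_k.$$
Since the $X_k$ are pairwise disjoint and cover $X$, this is well defined, and since there are only finitely many pieces each of which is bounded, $h_{\mathbf j}\in\ell_\infty(X)$.

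Now I would verify that $\{h_{\mathbf j}\}_{\mathbf j}$ is an $\varepsilon$-net of $B_H$ in $\ell_\infty(X)$. Given any $f\in B_H$ we have $f|_{X_k}\in B_{H|_{X_k}}$ by the first step, so for each $k$ there exists an index $j_k(f)\in\{1,\ldots,n_k\}$ such that $\|f|_{X_k}-g_{j_k(f)}^{(k)}\|_{\ell_\infty(X_k)}\leq\varepsilon$. Setting $\mathbf j(f):=(j_1(f),\ldots,j_N(f))$ and using that $X_1\cup\cdots\cup X_N=X$ together with the disjointness, we obtain
$$\|f-h_{\mathbf j(f)}\|_{\ell_\infty(X)}=\max_{1\leq k\leq N}\|f|_{X_k}-g_{j_k(f)}^{(k)}\|_{\ell_\infty(X_k)}\leq\varepsilon.$$
Since the total number of tuples equals $\prod_{k=1}^N n_k$, this produces a covering of $B_H$ with the claimed cardinality.

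There is no real obstacle beyond ensuring the two easy ingredients are visibly in place: that restriction maps $B_H$ into $B_{H|_{X_k}}$ (immediate from Corollary \ref{cor:restricted_kernel}) and that the sup-norm decouples across the partition (immediate from disjointness and covering). The only mild subtlety is that the glued centers $h_{\mathbf j}$ need not lie in $H$, but the definition of covering number only requires the centers to be elements of the ambient normed space $\ell_\infty(X)$, which they are.
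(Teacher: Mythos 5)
Your proposal is correct and is essentially the paper's own argument: the paper likewise uses Corollary \ref{cor:restricted_kernel} to place the restrictions $f|_{X_k}$ in $B_{H|_{X_k}}$ and then glues the nets (via sums of zero-extensions, which is exactly your piecewise construction), only presenting the case $N=2$ and handling general $N$ by induction rather than writing the product net directly as you do. Your remark that the glued centers need only lie in $\ell_\infty(X)$, not in $H$, is the same observation implicit in the paper's use of zero-extensions.
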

\begin{proof}
As the general statement easily follows inductively, we will only prove the case $N=2$. To this end, let $f_1,\ldots,f_n\in\ell_\infty(X_1)$ be a minimal $\varepsilon$-net of $B_{H|_{X_1}}$ and let $g_1,\ldots,g_m\in\ell_\infty(X_2)$ be  a minimal $\varepsilon$-net of $B_{H|_{X_2}}$. Let $f\in B_H$. Then by Corollary \ref{cor:restricted_kernel} we have $f|_{X_l}\in H|_{X_l}$ with $\norm{f|_{X_l}}_{H|_{X_l}}\leq\norm{f}_H\leq 1$ for $l=1,2$. Hence, there exist $i,j$ with $\norm{f|_{X_1}-f_i}_{\ell_\infty(X_1)}\leq \varepsilon$ and $\norm{f|_{X_2}-g_j}_{\ell_\infty(X_2)}\leq \varepsilon$. If we denote the zero-extensions of $f_i,g_j$ to $X$ by $\widehat{f}_i, \widehat{g}_j$, we see that $\{\widehat{f}_i+\widehat{g}_j:i=1,\ldots,n,\, j=1,\ldots,m \}$ is a $\varepsilon$-net of $B_H$ with cardinality $n\cdot m$.
\end{proof}
\begin{cor}\label{cor:aux_entropy_bound}
For all $\gamma>0$ and $\varepsilon>0$ we have
\begin{align*}
&\log\cN(\id:H_\gamma(X)\rightarrow\ell_\infty(X),\varepsilon) \\
\leq & \, \cN_{\ell_\infty^d}(X,\gamma)\cdot\log\cN\left(\id:H_1([-1,1]^d)\rightarrow\ell_\infty([-1,1]^d),\varepsilon\right).
\end{align*}
\end{cor}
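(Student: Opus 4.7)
The plan is to reduce the covering number of $H_\gamma(X)$ for an arbitrary set $X$ and bandwidth $\gamma$ to the covering number of the canonical RKHS $H_1([-1,1]^d)$ on the unit cube. This goes in three stages: a partition step using Lemma \ref{lem:linfty_covering_numbers_partition}, a localization and shift step using Corollaries \ref{cor:restricted_kernel} and \ref{cor:shifted_kernel}, and a scaling step using Lemma \ref{lem:transformed_kernel}.

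First I would fix a minimal $\gamma$-net $\{a_1,\ldots,a_N\}$ of $X$ in $\ell_\infty^d$ with $N:=\cN_{\ell_\infty^d}(X,\gamma)$, so that $X$ is covered by the boxes $a_j+[-\gamma,\gamma]^d$. From this cover I would build a pairwise disjoint partition $X=X_1\cup\cdots\cup X_N$ by setting $X_j:=(X\cap(a_j+[-\gamma,\gamma]^d))\setminus\bigcup_{i<j}X_i$. Applying Lemma \ref{lem:linfty_covering_numbers_partition} to this partition then gives
\begin{align*}
\log\cN_{\ell_\infty(X)}(B_{H_\gamma(X)},\varepsilon)\leq \sum_{j=1}^N\log\cN_{\ell_\infty(X_j)}\bigl(B_{H_\gamma(X)|_{X_j}},\varepsilon\bigr).
\end{align*}

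Second, I would bound each summand by the covering number of the unit ball of the RKHS on the full shifted cube $a_j+[-\gamma,\gamma]^d$. Corollary \ref{cor:restricted_kernel} identifies $H_\gamma(X)|_{X_j}$ and $H_\gamma(a_j+[-\gamma,\gamma]^d)|_{X_j}$ both with $H_\gamma(X_j)$, and the restriction $H_\gamma(a_j+[-\gamma,\gamma]^d)\to H_\gamma(X_j)$ is a metric surjection, i.e.\ maps the unit ball onto the unit ball. Since restriction also contracts $\ell_\infty$-norms, any $\varepsilon$-net of $B_{H_\gamma(a_j+[-\gamma,\gamma]^d)}$ in $\ell_\infty(a_j+[-\gamma,\gamma]^d)$ restricts to an $\varepsilon$-net of $B_{H_\gamma(X)|_{X_j}}$ in $\ell_\infty(X_j)$. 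Next, Corollary \ref{cor:shifted_kernel} applied to the translation $x\mapsto x-a_j$ gives an isometric isomorphism between $H_\gamma(a_j+[-\gamma,\gamma]^d)$ and $H_\gamma([-\gamma,\gamma]^d)$ whose induced reparametrization also preserves the sup-norm. Together these steps bound each summand by $\log\cN_{\ell_\infty([-\gamma,\gamma]^d)}(B_{H_\gamma([-\gamma,\gamma]^d)},\varepsilon)$.

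Third, I would apply Lemma \ref{lem:transformed_kernel} to the bijective scaling map $\psi:[-1,1]^d\to[-\gamma,\gamma]^d$, $y\mapsto\gamma y$. A direct computation gives $k_\gamma(\psi(y),\psi(y'))=\exp(-\gamma^{-2}\norm{\gamma y-\gamma y'}^2)=k_1(y,y')$, so $f\mapsto f\circ\psi$ is an isometric isomorphism from $H_\gamma([-\gamma,\gamma]^d)$ onto $H_1([-1,1]^d)$ that also preserves the $\ell_\infty$-norm. Each summand therefore equals $\log\cN(\id:H_1([-1,1]^d)\to\ell_\infty([-1,1]^d),\varepsilon)$, and summing over the $N$ partition blocks yields the claim. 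I do not expect a serious obstacle; the only subtlety is verifying at each identification (restriction, translation, scaling) that both the RKHS norm and the sup-norm are tracked correctly, so that $\ell_\infty$-covers transport in the right direction.
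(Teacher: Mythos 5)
Your proposal is correct and follows essentially the same route as the paper: partition $X$ along a minimal $\gamma$-net, apply Lemma \ref{lem:linfty_covering_numbers_partition}, and transport the covers via the restriction and translation results before rescaling to $[-1,1]^d$. The only cosmetic difference is that you derive the scaling isometry $H_\gamma(\gamma[-1,1]^d)\cong H_1([-1,1]^d)$ directly from Lemma \ref{lem:transformed_kernel}, whereas the paper cites \cite[Proposition 4.37]{SteinwartChristmannSVMs} for the same fact.
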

\begin{proof}
Let $x_1,\ldots,x_n\in\R^d$ be a minimal $\gamma$-net of $X$ w.r.t.~$\ell_\infty^d$. We partition $X$ into $X_1,\ldots,X_n$, where $X_j$ consists of the points $x\in X$ that are $\ell_\infty^d$-closest to $x_j$. Here we break ties, for example, in favor of a smaller index $j$. Combining Lemma \ref{lem:linfty_covering_numbers_partition}, Corollary \ref{cor:restricted_kernel} and Corollary \ref{cor:shifted_kernel} we get
\begin{align*}
\log\cN_{\ell_\infty(X)}\left(B_{H_\gamma(X)},\varepsilon\right) &\leq  \sum_{j=1}^n \log\cN_{\ell_\infty(X_j)}\left(B_{H_\gamma(X_j)},\varepsilon\right) \\
&\leq  \sum_{j=1}^n \log\cN_{\ell_\infty(x_j+\gamma [-1,1]^d)}\left(B_{H_\gamma(x_j+\gamma[-1,1]^d)},\varepsilon\right) \\
&= n\log\cN_{\ell_\infty(\gamma[-1,1]^d)}\left(B_{H_\gamma(\gamma[-1,1]^d)},\varepsilon\right).
\end{align*}
The result now follows from \cite[Proposition 4.37]{SteinwartChristmannSVMs}, which states that the scaling operator $\tau_\gamma:H_\gamma(\gamma[-1,1]^d)\rightarrow H_1([-1,1]^d)$ defined by $ \tau_\gamma f(x)=f(\gamma x)$ is an isometric isomorphism.
\end{proof}
\begin{proof}[Proof of Theorem \ref{thm:l_infty_entropy_numbers}]
For $f\in B_{H_1([-1,1]^d)}$ by \cite[Lemma 4.23]{SteinwartChristmannSVMs} we have $\norm{f}_\infty\leq 1$
and consequently we find $\cN(B_{H_1([-1,1]^d)},\varepsilon)=1$ for all $\varepsilon\geq 1$. Furthermore, by \cite[Theorem 3]{KuehnEntrop} there exists a constant $K\geq 1$ such that 
\begin{align*}
\log\cN\left( B_{H_1([-1,1]^d)},\varepsilon \right)\leq K \log^{d+1}\frac{2}{\varepsilon}
\end{align*}
for all $\varepsilon\in(0,1]$. Some elementary calculations show that
\begin{align*}
\sup_{\varepsilon\in(0,2)} \varepsilon^q \log^{d+1}\frac{2}{\varepsilon} = 2^q \left( \frac{d+1}{\e q} \right)^{d+1},
\end{align*}
which combined with Corollary \ref{cor:aux_entropy_bound} and the estimate above gives us
\begin{align}\label{eqn:log_cover_gauss}
\begin{split}
\log\cN(\id:H_\gamma(X)\rightarrow\ell_\infty(X),\varepsilon) &\leq K\cN_{\ell_\infty^d}(X,\gamma) \log^{d+1}\frac{2}{\varepsilon} \\
&\leq  4K\cN_{\ell_\infty^d}(X,\gamma) \left( \frac{d+1}{\e q} \right)^{d+1}\varepsilon^{-q}
\end{split}
\end{align}
for $\varepsilon>0$ and $q\in(0,2)$. As a final step we convert the latter bound on the covering numbers of $ \id:H_\gamma(X)\rightarrow\ell_\infty(X) $ into a bound on the entropy numbers. To this end, we fix an $i\geq2$ and define $\varepsilon>0$ by $\exp (a/\varepsilon^q)=2^{i-1} $, where
\begin{align*}
a:=4K\cN_{\ell_\infty^d}(X,\gamma) \left( \frac{d+1}{\e q} \right)^{d+1}.
\end{align*}
By (\ref{eqn:log_cover_gauss}) this implies
\begin{align*}
e_i(\id:H_\gamma(X)\rightarrow\ell_\infty(X))\leq \left( \frac{(i-1)\log 2}{a} \right)^{-\frac{1}{q}} \leq \left(\frac{2a}{\log 2} \right)^\frac{1}{q} i^{-\frac{1}{q}}
\end{align*}
for all $i\geq 2$. Since $e_1(\id:H_\gamma(X)\rightarrow\ell_\infty(X))\leq 1$ we get
\begin{align*}
e_i(\id:H_\gamma(X)\rightarrow\ell_\infty(X))\leq \left( \frac{\cN_{\ell_\infty^d}(X,\gamma) 8K}{\log 2} \left( \frac{d+1}{\e q} \right)^{d+1} \right)^\frac{1}{q} i^{-\frac{1}{q}}
\end{align*}
for all $i\in\N$. Now substitute $2p=q$ and absorb all irrelevant constants into a new constant $K$.
\end{proof}
For the proof of Corollary \ref{cor:averaged_entropy_numbers} we will use the basic fact that entropy numbers are dominated by decompositions, that is, if we can decompose $T:E\rightarrow F$ into $ T=RS$ with bounded, linear operators $S:E\rightarrow \tilde{F}$, $R:\tilde{F}\rightarrow F$, and an intermediate normed space $\tilde{F}$, then we have $e_i(T)\leq \norm{R} e_i(S)$ as well as $e_i(T)\leq e_i(R)\norm{S}$ for all $i\in\N$, c.f.~\cite[Section 1.3]{CarlStephani_EntropyCompactness}, where $\norm{R}$, $\norm{S}$ denotes the operator norm.
\begin{proof}[Proof of Corollary \ref{cor:averaged_entropy_numbers}]
Consider the decomposition of $\id:H_\gamma(X)\rightarrow L_2(\D)$ for a sample $D\in(\supp\,\P_X)^n$ described by the commutative diagram
\begin{center}
\begin{tikzcd}[row sep=2cm, column sep = 3cm]
H_\gamma(X) \arrow[r, "\mathrm{id}"] \arrow[d, "\mathrm{res}", swap]
& L_2(\D)  \\
H_\gamma(\supp\,\P_X) \arrow[r, "\mathrm{id}", swap]
& \ell_\infty(\supp\,\P_X) \arrow[u, "\mathrm{id}", swap]
\end{tikzcd}
\end{center}
where $\mathrm{res}:H_\gamma(X)\rightarrow H_\gamma(\supp\,\P_X)$ is the restriction operator. We have $\norm{\mathrm{res}:H_\gamma(X)\rightarrow H_\gamma(\supp,\P_X)}\leq 1$ by Corollary \ref{cor:restricted_kernel} and trivially also $\norm{\id:\ell_\infty(\supp\,\P_X)\rightarrow L_2(\D)}\leq 1$. Theorem \ref{thm:l_infty_entropy_numbers} then implies
\begin{align*}
e_i(\id:H_\gamma(X)\rightarrow L_2(\D))\leq K^\frac{1}{2p} p^{-\frac{d+1}{2p}} \cN_{\ell_\infty^d}(\supp\,\P_X,\gamma)^\frac{1}{2p}\, i^{-\frac{1}{2p}}
\end{align*}
for all $i\in \N$, $\gamma>0$ and $p\in (0,1)$ for $\P_X^n$-almost all $D\in(X\times Y)^n$. Now combine the above bound with assumption \ref{ass:boxdim_X} and take the expectation w.r.t.~$D\sim \P_X^n$.
\end{proof}

\subsection{A General Oracle Inequality}\label{sec:general_oracle_inequality}
In this section we derive a general oracle inequality for bounding the excess risk of our estimator $f_{D,\lambda,\gamma}$. A main ingredient for this oracle inequality is a so-called variance bound, which has been proven to be a useful tool for deriving fast learning rates, see for example \cite{Bousquet_NewApproachesToStatisticalLearningTheory}, especially the discussion in Section 5.2 therein.
\begin{dfn}\label{dfn:variance_bound}
Let $L$ be a loss that can be clipped at $M>0$ and let $\mathcal{F}$ be some function class of measurable functions $f:X\rightarrow\R$. Assume there exists a Bayes decision function $f_{L,\P}^*:X\rightarrow[-M,M]$. We say, that the supremum bound is satisfied, if there exists a constant $B>0$, such that $L(y,t)\leq B$ for all $(y,t)\in Y\times[-M,M]$. We further say, that the variance bound is satisfied, if there exist $\vartheta\in[0,1]$ and $V\geq B^{2-\vartheta}$, such that
\begin{align*}
\E (L\circ \wideparen{f}-L\circ f_{L,\P}^*)^2 \leq V\cdot \left( \E \,L\circ \wideparen{f}-L\circ f_{L,\P}^*\right)^\vartheta
\end{align*}
for all $f\in\mathcal{F}$.
\end{dfn}
Intuitively, the variance bound ensures a small variance of the excess risk, whenever our estimator is close to the optimum.

The main results of Sections \ref{sec:learning_rates_regression} and \ref{sec:learning_rates_classification} are based on the following theorem, which may also serve as a blueprint for deriving learning rates for Gaussian SVMs under assumption \ref{ass:boxdim_X} using other loss functions than the ones considered in this paper. To this end we first introduce some minimum regularity assumption, that we need to impose on the loss function $L$. We say that a loss function $L:Y\times\R\rightarrow [0,\infty)$ is locally Lipschitz continuous if for every $a>0$ the functions $L(y,\cdot)|_{[-a,a]}, y\in Y$ are uniformly Lipschitz continuous, that is
\begin{align*}
|L|_{a,1}:=\sup_{\substack{s,t\in [-a,a], s\neq t \\y\in Y} } \frac{|L(y,t)-L(y,s)|}{|t-s|}<\infty.
\end{align*}
The following theorem establishes an oracle inequality for Gaussian SVMs under assumption \ref{ass:boxdim_X} for generic loss functions.
\begin{thm}\label{thm:general_oracle_inequality}
Assume $L$ is a locally Lipschitz continuous loss that can be clipped at $M>0$ and that the supremum and variance bounds are satisfied for constants $B>0, \vartheta\in[0,1]$, and $V\geq B^{2-\vartheta}$. Furthermore, assume $X$ satisfies \ref{ass:boxdim_X} for $\Cdim>0$ and $0<\varrho\leq d$ and fix an $f_0\in H_\gamma(X)$ and a $B_0\geq B$ with $\norm{L\circ f_0}_\infty\leq B_0$. Then there exists a constant $K$ such that for all $n\in\N, \gamma\in(0,1),\lambda>0, p\in (0,1/2]$ and $\tau>0$ we have
\begin{align}\label{eqn:general_oracle_inequality}
\begin{split}
\Rk_{L,\P}(\wideparen{f}_{D,\lambda,\gamma})-\Rk_{L,\P}^*\leq &\,9(\lambda\norm{f_0}_{H_\gamma(X)}^2+\Rk_{L,\P}(f_0)-\Rk_{L,\P}^*) \\
&+ C_\P K \left( \dfrac{p^{-d-1}\gamma^{-\varrho}}{\lambda^p n} \right)^\frac{1}{2-p-\vartheta+\vartheta p}+3\left( \frac{72V\tau}{n} \right)^\frac{1}{2-\vartheta}+\frac{15B_0 \tau}{n}
\end{split}
\end{align}
with probability $\P^n$ not less than $1-3\e^{-\tau}$, where $K$ is independent of $\P$ and
\begin{align*}
C_\P=\max\left\{B, \left( |L|_{M,1}^pV^\frac{1-p}{2}\right)^\frac{2}{2-p-\vartheta+\vartheta p}, |L|_{M,1}^pB^{1-p},1 \right\}\cdot \max\left\{\Cdim,B^{2p} \right\}^\frac{1}{2-p-\vartheta+\vartheta p}.
\end{align*}
If $\supp\,\P_X$ is contained in a $\varrho$-dimensional, infinitely differentiable, compact manifold then the factor $p^{-d-1}$ on the right hand side of (\ref{eqn:general_oracle_inequality}) can be substituted by $p^{-\varrho-1}$, where the constant $C_\P$ then additionally depends on the curvature of the manifold.
\end{thm}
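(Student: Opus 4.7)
The plan is to deduce the bound from a general-purpose oracle inequality for regularized empirical risk minimization in the style of \cite[Theorem 7.20]{SteinwartChristmannSVMs}, specialized to the Gaussian SVM. Such an inequality takes as inputs a supremum bound, a variance bound, an anchor function $f_0$, and a polynomial bound on the averaged $L_2(\D)$-entropy numbers of the unit ball of $H_\gamma(X)$; its output is exactly of the form displayed in (\ref{eqn:general_oracle_inequality}), with the regularization/approximation summand $9(\lambda\norm{f_0}^2+\Rk_{L,\P}(f_0)-\Rk_{L,\P}^*)$ being standard, the $(72V\tau/n)^{1/(2-\vartheta)}$ and $15B_0\tau/n$ terms being the Bernstein-type concentration contributions driven respectively by $V$ and by the supremum bound, and the middle statistical-error term being the only one requiring real work.

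First, I would pass from $H_\gamma(X)$ to a ball of suitable radius using the standard energy comparison: testing the objective (\ref{eqn:def_SVM}) at $f=0$ gives $\lambda \norm{f_{D,\lambda,\gamma}}_{H_\gamma(X)}^2 \leq \Rk_{L,D}(0) \leq B$, hence $\norm{f_{D,\lambda,\gamma}}_{H_\gamma(X)} \leq \sqrt{B/\lambda}$, and the same argument bounds the clipped version. By homogeneity of entropy numbers and Corollary \ref{cor:averaged_entropy_numbers}, the averaged $L_2(\D)$-entropy numbers of the admissible ball are therefore bounded by $\sqrt{B/\lambda}\cdot (\Cdim K)^{1/(2p)} p^{-(d+1)/(2p)}\gamma^{-\varrho/(2p)} i^{-1/(2p)}$ for every $p\in(0,1/2]$. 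That is, the constant $a$ in the generic entropy bound $\E e_i\leq a\, i^{-1/(2p)}$ satisfies $a^{2p}= (B/\lambda)^{p}\Cdim K p^{-(d+1)}\gamma^{-\varrho}$.

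Next I would substitute this into the generic oracle inequality. In its derivation via Talagrand's inequality together with the peeling/chaining argument, the entropy constant $a$ enters the statistical term raised to the power $2p$ and then to $1/(2-p-\vartheta+\vartheta p)$, while the constants depending on $\vartheta$ combine with $|L|_{M,1}$, $V$ and $B$ exactly as encoded in the $\max\{\cdot\}$ defining $C_\P$. This produces the middle term $C_\P K\bigl(p^{-d-1}\gamma^{-\varrho}/(\lambda^p n)\bigr)^{1/(2-p-\vartheta+\vartheta p)}$ as stated. The manifold refinement would be obtained by replacing Corollary \ref{cor:aux_entropy_bound}, which routes through a $\cN_{\ell_\infty^d}$-covering and therefore costs an exponent $d+1$ in $p^{-1}$, by a direct covering of $\supp\,\P_X$ by finitely many Euclidean charts of intrinsic dimension $\varrho$: inside each chart one applies \cite[Theorem 3]{KuehnEntrop} in dimension $\varrho$ and combines via Lemma \ref{lem:linfty_covering_numbers_partition}, with the curvature entering through the number of charts and the bi-Lipschitz distortion.

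The main obstacle will not be any single inequality but the bookkeeping: making sure that the various constants that appear in the chaining step (which typically involves splitting into a $V$-dominated regime and a $B$-dominated regime and choosing thresholds in the Talagrand inequality) combine into precisely the four alternatives inside $\max\{B,\, (|L|_{M,1}^p V^{(1-p)/2})^{2/(2-p-\vartheta+\vartheta p)},\, |L|_{M,1}^p B^{1-p},1\}$ and that the $\Cdim$-dependence enters only through the stated factor $\max\{\Cdim,B^{2p}\}^{1/(2-p-\vartheta+\vartheta p)}$. The exponent $2-p-\vartheta+\vartheta p=(1-p)(2-\vartheta)+p$ is characteristic of combining a variance bound with exponent $\vartheta$ and an entropy exponent $2p$, so its appearance is an internal consistency check that the entropy input has been plugged in at the right place.
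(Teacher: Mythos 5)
For the main inequality your route is essentially the paper's: the proof consists of feeding the entropy bound of Corollary \ref{cor:averaged_entropy_numbers} into the general SVM oracle inequality \cite[Theorem 7.23]{SteinwartChristmannSVMs} (the expectation version of the Theorem 7.20--type result you invoke; note that this theorem already takes the \emph{unit-ball} entropy bound as input and produces the $\lambda^{-p}$ factor internally, so your extra energy-comparison/rescaling step via $\norm{f_{D,\lambda,\gamma}}_{H_\gamma(X)}\leq\sqrt{B/\lambda}$ is redundant and, if combined naively with that theorem, would double count the $\lambda^{-p}$). The one piece of ``bookkeeping'' that genuinely requires an argument, and which you only gesture at, is that the constant $K(p)$ produced by the chaining step in \cite[proof of Theorem 7.23]{SteinwartChristmannSVMs} stays bounded as $p\downarrow 0$: the theorem claims $K$ independent of $p$ with the entire $p$-dependence isolated in $p^{-d-1}$, and this relies on the uniform boundedness of the constants $C_1(p)$ and $C_2^{1+p}(p)$ on $(0,1/2]$, which the paper takes from \cite[Proof of Theorem 7]{FarooqSteinwart_LearningRatesforKernelBasedExpectileRegression}. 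Without this, one only gets a $p$-dependent constant, which would ruin the later choice $p\asymp 1/\log n$.

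The manifold refinement, however, contains a genuine gap as you sketch it. Covering $\supp\,\P_X$ by bi-Lipschitz charts and ``applying \cite[Theorem 3]{KuehnEntrop} in dimension $\varrho$ inside each chart'' does not work directly: pulling the ambient Gaussian kernel $k_\gamma(x,y)=\exp(-\gamma^{-2}\norm{x-y}^2)$ back through a chart $\psi$ yields, by Lemma \ref{lem:transformed_kernel}, the kernel $\exp(-\gamma^{-2}\norm{\psi(u)-\psi(v)}^2)$ on a subset of $\R^\varrho$, which is \emph{not} a Gaussian kernel on that subset unless $\psi$ is a Euclidean isometry (this is exactly why Corollary \ref{cor:shifted_kernel} is restricted to rigid motions). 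Bi-Lipschitz equivalence of metrics transfers covering numbers of \emph{sets}, but not covering numbers of RKHS unit balls, since the RKHS itself changes with the kernel; so K\"uhn's bound in dimension $\varrho$ cannot be invoked chart by chart, and Lemma \ref{lem:linfty_covering_numbers_partition} alone does not repair this. The paper avoids the issue by replacing the $\ell_\infty$-covering estimate with the covering number bound for Gaussian RKHS balls on submanifolds from \cite[Lemma 5.4]{YangDunson_BayesianManifoldRegression}, which is where the exponent $\varrho+1$ and the curvature dependence actually come from; some substitute for that lemma is needed to make your version of this part rigorous.
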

\begin{proof}
By \cite[Theorem 7.23]{SteinwartChristmannSVMs} in combination with the entropy estimate from Corollary \ref{cor:averaged_entropy_numbers} we have
\begin{align*}
\Rk_{L,\P}(\wideparen{f}_{D,\lambda,\gamma})-\Rk_{L,\P}^*\leq & \,9(\lambda\norm{f_0}_{H_\gamma(X)}^2+\Rk_{L,\P}(f_0)-\Rk_{L,\P}^*) \\
&+ K(p) \left( \dfrac{a^{2p}}{\lambda^p n} \right)^\frac{1}{2-p-\vartheta+\vartheta p}+3\left( \frac{72V\tau}{n} \right)^\frac{1}{2-\vartheta}+\frac{15B_0 \tau}{n}
\end{align*}
with probability not less than $1-3\e^{-\tau}$, where
\begin{align*}
a=\max\left\{(\Cdim K)^\frac{1}{2p}p^{-\frac{d+1}{2p}}\gamma^{-\frac{\varrho}{2p}},B\right\}\leq p^{-\frac{d+1}{2p}}\gamma^{-\frac{\varrho}{2p}}\max\left\{ (\Cdim )^\frac{1}{2p},B \right\}K^\frac{1}{2p}
\end{align*}
with the constant $K$ from Corollary \ref{cor:averaged_entropy_numbers} and $K(p)$ is given by (see \cite[proof of Theorem 7.23]{SteinwartChristmannSVMs})
\begin{align*}
K(p)=&\max \big\{ 2B, 3\left( 30\cdot 2^p C_1(p) |L|_{M,1}^p V^\frac{1-p}{2} \right)^\frac{2}{2-p-\vartheta+\vartheta p},
90\cdot 120^p C_2^{1+p}(p) |L|_{M,1}^p B^{1-p} \big\} \\
\leq& \max\left\{ B, \left(|L|_{M,1}^p V^\frac{1-p}{2} \right)^\frac{2}{2-p-\vartheta+\vartheta p},|L|_{M,1}^pB^{1-p},1  \right\} \\
&\cdot\max\left\{2,3\left(30\cdot 2^pC_1(p) \right)^\frac{2}{2-p-\vartheta+\vartheta p},90\cdot 120^pC_2^{1+p}(p) \right\}.
\end{align*}
The constants $C_1(p)$ and $C_2(p)$ are given by
\begin{align*}
C_1(p):=\dfrac{2\sqrt{\log 256}C_p^p}{(\sqrt{2}-1)(1-p)2^\frac{p}{2}},\quad C_2(p)=\left(\dfrac{8\sqrt{\log 16} C_p^p}{(\sqrt{2}-1)(1-p)4^p}\right)^\frac{2}{1+p},
\end{align*}
where
\begin{align*}
C_p=\dfrac{\sqrt{2}-1}{\sqrt{2}-2^\frac{2p-1}{2p}}\cdot\dfrac{1-p}{p},
\end{align*}
which can be tracked in \cite[proof of Theorem 7.16]{SteinwartChristmannSVMs}. It was shown in \cite[Proof of Theorem 7]{FarooqSteinwart_LearningRatesforKernelBasedExpectileRegression} that $C_1(p)$ and $C_2^{1+p}(p)$ are uniformly bounded in $p\in(0,\frac{1}{2}]$, more precisely, we have
\begin{align*}
\sup_{p\in(0,\frac{1}{2}]}C_1(p)\leq 46\e \quad \text{and} \quad \sup_{p\in(0,\frac{1}{2}]}C_2^{1+p}(p)\leq 1035\e^2,
\end{align*}
which implies
\begin{align*}
K(p)\leq \tilde{K}\max\left\{ B, \left(|L|_{M,1}^p V^\frac{1-p}{2} \right)^\frac{2}{2-p-\vartheta+\vartheta p},|L|_{M,1}^pB^{1-p},1  \right\}
\end{align*}
with a constant $\tilde{K}$ independent of $p$ and $\vartheta$. To prove the statement under the more restrictive manifold assumption we apply \cite[Theorem 7.23]{SteinwartChristmannSVMs} with a constant $a\lesssim p^{-(\varrho+1)/(2p)}\gamma^{-\varrho/(2p)}$ by using the covering number bound from \cite[Lemma 5.4]{YangDunson_BayesianManifoldRegression}. The rest of the proof remains unchanged.
\end{proof}

\subsection{Proofs related to Section \ref{sec:learning_rates_regression} }\label{sec:proofs_regression}
\begin{lem}\label{lem:meanvaluetheorem}
Let $f:(a,b)\rightarrow\R$ be $s$-times continuously differentiable. Furthermore, fix $x\in (a,b)$, $h>0$, and $k\in\{1,\ldots,s\}$ with $(x,x+sh)\subset(a,b)$. Then there exists a $\xi\in (x,x+kh)$ such that $h^{-s}\Delta_h^sf(x)=h^{k-s}\Delta_h^{s-k}f^{(k)}(\xi)$, where $\Delta_h^s$ is the difference operator defined by (\ref{eqn:difference_operator}).
\end{lem}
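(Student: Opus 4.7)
I would deduce this as a direct corollary of the classical mean value theorem for $k$-th order finite differences, namely the statement that whenever $F$ is $k$-times continuously differentiable on an interval containing $[y, y+kh]$, there exists $\eta \in (y, y+kh)$ with $\Delta_h^k F(y) = h^k F^{(k)}(\eta)$. This is a standard undergraduate fact, provable either by iterating Rolle's theorem $k$ times, or (more cleanly) via the integral representation
\begin{align*}
\Delta_h^k F(y) = h^k \int_{[0,1]^k} F^{(k)}\!\left(y + h(u_1 + \cdots + u_k)\right) du_1\cdots du_k
\end{align*}
combined with the intermediate value theorem applied to the continuous function $v \mapsto F^{(k)}(y+hv)$ on $[0,k]$.

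To apply this in the present situation I would set $F := \Delta_h^{s-k} f$, viewed as a function of its base point. Since $f$ is $s$-times continuously differentiable on $(a,b)$ and $(x,x+sh)\subset(a,b)$, the function $F$ is well-defined and $s$-times continuously differentiable on an interval containing $[x, x+kh]$. Two elementary commutation identities are needed: first,
\begin{align*}
\Delta_h^k F(x) = \Delta_h^k \Delta_h^{s-k} f(x) = \Delta_h^s f(x),
\end{align*}
which is immediate from the definition of the iterated difference operator; and second,
\begin{align*}
F^{(k)}(y) = \bigl(\Delta_h^{s-k} f\bigr)^{(k)}(y) = \Delta_h^{s-k}\bigl(f^{(k)}\bigr)(y),
\end{align*}
which follows by iterating the trivial identity $(\Delta_h g)' = \Delta_h(g')$ valid for any differentiable $g$.

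With these two identities in hand, applying the classical finite-difference mean value theorem above to $F$ at base point $x$ yields a $\xi \in (x, x+kh)$ with
\begin{align*}
\Delta_h^s f(x) \;=\; \Delta_h^k F(x) \;=\; h^k F^{(k)}(\xi) \;=\; h^k \Delta_h^{s-k} f^{(k)}(\xi),
\end{align*}
and multiplying by $h^{-s}$ produces the claim. I do not foresee any genuine obstacle: the whole content is a routine combinatorial rearrangement around the classical theorem plus the commutation of $\Delta_h$ with $d/dy$. The only minor care point is to verify that the chosen $\xi$ can be taken in the \emph{open} interval, which is automatic from the integral/IVT form of the classical result (since the Irwin–Hall-type weight on $[0,k]$ is continuous and strictly positive on $(0,k)$).
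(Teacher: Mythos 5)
Your argument is correct, and it reaches the conclusion by a slightly different route than the paper. The paper's proof iterates the ordinary one-step mean value theorem: writing $\Delta_h^s=\Delta_h\Delta_h^{s-1}$, it applies the MVT to $h^{-1}\Delta_h^{s-1}f$, uses $\frac{\mathrm{d}}{\mathrm{d}x}\Delta_h f=\Delta_h f'$ to trade one difference for one derivative, and repeats this $k$ times, letting the intermediate point drift from $(x,x+h)$ to $(x,x+2h)$ and finally into $(x,x+kh)$. You instead invoke the $k$-th order finite-difference mean value theorem in a single step, applied to $F:=\Delta_h^{s-k}f$, relying on the same two commutation identities ($\Delta_h^k\Delta_h^{s-k}=\Delta_h^s$ and $(\Delta_h^{s-k}f)^{(k)}=\Delta_h^{s-k}f^{(k)}$) but packaging the iteration into a classical result that you then prove via the Hermite--Genocchi-type integral representation and the intermediate value theorem. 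Both proofs are elementary and both need $f^{(k)}$ on essentially $[x,x+sh]$, so the hypotheses are used identically; the trade-off is that the paper's version is self-contained (only the ordinary MVT, at the cost of tracking the drifting point $\xi$ through $k$ applications), while yours is a one-shot argument whose integral form additionally gives the open-interval location of $\xi$ for free, as you note, since the Irwin--Hall weight is strictly positive on $(0,k)$. If you prefer not to cite the higher-order finite-difference MVT, your integral representation also yields the identity directly by induction, so there is no gap either way.
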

\begin{proof}
Because of $\Delta_h^s=\Delta_h\Delta_h^{s-1}$ we can apply the mean value theorem to the function $h^{-1}\Delta_h^{s-1}f$, which gives us $h^{-s}\Delta_h^s f(x)=h^{1-s}\Delta_h^{s-1}f'(\xi)$ for some $\xi\in(x,x+h)$. Note that in the last step we used $\frac{\mathrm{d}}{\mathrm{d}x}\Delta_hf(x)=\Delta_hf'(x)$. That is, we have proven the assertion for $k=1$. Now we can iterate this argument by applying the mean value theorem to $h^{-1}\Delta_h^{s-2}f'$ and so on.
\end{proof}
\begin{proof}[Proof of Proposition \ref{prop:finite_Besov_norm}]
Fix an $x\in X$ and an $h\in\R^d$ with $\norm{h}<\delta/(s-1)$, where $s:=\lfloor\alpha\rfloor +1$, and define the univariate function $F(t):=f(x+th)$. Then we have $\Delta_h^sf(x)=\Delta_1^sF(0)=\Delta_1F^{(s-1)}(\xi)$ by Lemma \ref{lem:meanvaluetheorem} for some $\xi\in(0,s-1)$. The $(s-1)$-th derivative of $F$ is given by
\begin{align*}
\dfrac{\mathrm{d}^{s-1}}{\mathrm{d}t^{s-1}}F(t)=\sum_{|\nu|=s-1} \dfrac{(s-1)!}{\nu!} h^\nu \,\partial^\nu f(x+th).
\end{align*}
When $\alpha$ is not an integer this leads us to the estimate
\begin{align*}
|\Delta_1F^{(s-1)}(\xi)|&=|F^{(s-1)}(\xi+1)-F^{(s-1)}(\xi)| \\
&=  \left|\sum_{|\nu|=s-1} \dfrac{(s-1)!}{\nu!}h^\nu\big(\partial^\nu f(x+(\xi+1)h)-\partial^\nu f(x+\xi h)\big)\right| \\
&\leq  \max_{|\nu|=s-1} |\partial^\nu f|_{\beta,X^{+\delta}}\,\norm{h}^{\beta}\left| \sum_{|\nu|=s-1} \dfrac{(s-1)!}{\nu!} \prod_{j=1}^d |h_j|^{\nu_j} \right| \\
&=\max_{|\nu|=s-1} |\partial^\nu f|_{\beta,X^{+\delta}}\,\norm{h}^{\beta} \norm{h}_{\ell_1^d}^{s-1}\\
&\leq d^\frac{s-1}{2}\max_{|\nu|=s-1} |\partial^\nu f|_{\beta,X^{+\delta}}\,\norm{h}^\alpha
\end{align*}
using the definition of the H\"older seminorm (\ref{eqn:Holder_quasinorm}) in the first inequality, the multinomial theorem in the next step, and $\beta+s-1=\alpha$ in the last step. When $\alpha$ is an integer we analogously get 
\begin{align*}
|\Delta_1F^{(s-1)}(\xi)|\leq 2d^\frac{s-1}{2}\max_{|\nu|=s-1} \norm{\partial^\nu f|_{X^{+\delta}}}_\infty\norm{h}^\alpha.
\end{align*}
In both cases this implies
\begin{align*}
\omega_{s,L_2(\mu)}(f,t)=\sup_{\norm{h}\leq t}\norm{\Delta_h^s f}_{L_2(\mu)}\leq C t^{\alpha}
\end{align*}
for $t<\delta/(s-1)$. For $t\geq \delta/(s-1)$ we can estimate
\begin{align*}
t^{-\alpha}\omega_{s,L_2(\mu)}(f,t)&\leq \left( \dfrac{\delta}{s-1}\right)^{-\alpha} \sup_{\norm{h}\leq t} \norm{\Delta_h^s f}_{L_2(\mu)}\\
&\leq  \left( \dfrac{\delta}{s-1}\right)^{-\alpha}\sup_{\norm{h}\leq t} \sum_{j=0}^s \binom{s}{j} \norm{f(\cdot+jh)}_{L_2(\mu)} \\
&\leq  \left( \dfrac{\delta}{s-1}\right)^{-\alpha}2^s \mu(X)\norm{f}_\infty
\end{align*}
which gives us $|f|_{B_{2,\infty}^\alpha(\mu)}=\sup_{t>0} t^{-\alpha} \omega_{s,L_2(\mu)}(f,t)<\infty$.
\end{proof}
To prove Theorem \ref{thm:ls_oracle_inequality} we need a suitable function $f_0\in H_\gamma(X)$ bounding the approximation error. To this end, we first collect some facts on Gaussian RKHSs, which are a summary of \cite[Theorem 4.21, Lemma 4.45, and Proposition 4.46]{SteinwartChristmannSVMs}. By introducing the function $K_\gamma:\R^d\rightarrow\R$ defined by
\begin{align*}
K_\gamma(x):=\left( \dfrac{2}{\gamma\sqrt{\pi}}\right)^\frac{d}{2}\exp(-2\gamma^{-2}\norm{x}^2),\qquad x\in\R^d,
\end{align*}
the Gaussian RKHS $H_\gamma(X)$ can be characterized as the image of the convolution operator $L_2(\R^d)\rightarrow H_\gamma(X)$ defined by $ g\mapsto K_\gamma*g$. The $H_\gamma(X)$-norm can be computed by $\norm{f}_{H_\gamma(X)}=\inf\{\norm{g}_{L_2(\R^d)}:f=K_\gamma*g\}$. Furthermore, for $0<\gamma_1<\gamma_2<\infty$ the space $H_{\gamma_2}(X)$ is continuously embedded into $H_{\gamma_1}(X)$ with
\begin{align}
\norm{\id:H_{\gamma_2}(X)\rightarrow H_{\gamma_1}(X)}\leq \left(\frac{\gamma_2}{\gamma_1}\right)^\frac{d}{2}. \label{eqn:embedding_property}
\end{align}
We will further make use of integration in spherical coordinates \cite[Theorem 2.49]{Folland_RealAnalysis}. Namely, for $f\in L_1(\R^d)$ or $f\geq 0$ we have
\begin{align}\label{eqn:integration_spherical_coordinates}
\int_{\R^d} f(x)\,\d x = \int_0^\infty \int_{\mathbb{S}^{d-1}} f(r\omega)r^{d-1}\,\d\sigma(\omega)\d r,
\end{align}
where $\mathbb{S}^{d-1}=\{x\in\R^d:\norm{x}=1\}$ and $\sigma$ is the surface measure on $\mathbb{S}^{d-1}$. For radial functions $f$, that is $f(x)=g(\norm{x})$ Equation (\ref{eqn:integration_spherical_coordinates}) simplifies to
\begin{align}\label{eqn:integral_radial_function}
\int_{\R^d} f(x)\,\d x= \frac{2\pi^\frac{d}{2}}{\Gamma\left( \frac{d}{2}\right)} \int_0^\infty g(r)r^{d-1}\,\d r,
\end{align}
since $\sigma(\mathbb{S}^{d-1})=2\pi^{d/2}/\Gamma(d/2)$, see e.g.~\cite[Proposition 2.54]{Folland_RealAnalysis}. Using (\ref{eqn:integral_radial_function}) one can easily check, that
\begin{align}\label{eqn:integral_gauss_kernel}
\int_{\R^d} (\gamma\sqrt{\pi})^{-\frac{d}{2}}K_\gamma(x)\,\d x=\int_{\R^d} \gamma^{-d}\left( \frac{2}{\pi}\right)^\frac{d}{2}\exp\left( -2\gamma^{-2}\norm{x}^2 \right)\,\d x=1,
\end{align}
which we will rely on later. Finally, we define
\begin{align}
G:=\sum_{j=1}^s \binom{s}{j} (-1)^{1-j} (j\gamma\sqrt{\pi})^{-\frac{d}{2}} K_{j\gamma}. \label{eqn:convolution_kernel}
\end{align}
The following lemma bounds the approximation error of a Gaussian SVM using the least-squares loss.
\begin{lem}\label{lem:L2error}
For $f\in L_2(\R^d)$ with $|f|_{B_{2,\infty}^\alpha(\P_X)}<\infty$ we have
\begin{align*}
\norm{G*f-f}_{L_2(\P_X)}^2 \leq |f|_{B_{2,\infty}^\alpha(\P_X)}^2 2^{-\alpha}\left(\dfrac{\Gamma\left( \frac{\alpha+d}{2}\right)}{\Gamma\left( \frac{d}{2}\right)} \right)^2 \gamma^{2\alpha}.
\end{align*}
\end{lem}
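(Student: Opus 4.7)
The plan is to show that $G \ast f - f$ can be expressed as an integral of an $s$-th order difference of $f$ against a normalized Gaussian density, after which the Besov seminorm bound and a standard Gamma-function moment computation finish the job. Concretely, the first step is to rescale each summand of $G$. For each $j\in\{1,\ldots,s\}$, the substitution $y = jz$ in $\int (j\gamma\sqrt\pi)^{-d/2} K_{j\gamma}(y)\, f(x-y)\,\mathrm d y$ turns the kernel $(j\gamma\sqrt\pi)^{-d/2}K_{j\gamma}$ into the single, $j$-independent density
\[
\phi_\gamma(z) := \bigl(2/(\gamma^2\pi)\bigr)^{d/2}\exp\bigl(-2\gamma^{-2}\norm{z}^2\bigr),
\]
which by (\ref{eqn:integral_gauss_kernel}) satisfies $\int_{\R^d}\phi_\gamma(z)\,\mathrm dz=1$. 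Hence
\[
(G\ast f)(x) \;=\; \int_{\R^d}\phi_\gamma(z)\sum_{j=1}^s \binom{s}{j}(-1)^{1-j} f(x-jz)\,\mathrm dz.
\]

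Next I use the binomial identity. Writing $(-1)^{1-j}=-(-1)^j$ and isolating the $j=0$ term in the expansion (\ref{eqn:difference_operator}) of $\Delta_{-z}^s f(x)$, one checks the identity
\[
\sum_{j=1}^s \binom{s}{j}(-1)^{1-j} f(x-jz) \;=\; f(x)\;-\;(-1)^s\,\Delta_{-z}^s f(x).
\]
Since $\phi_\gamma$ integrates to one, this yields the clean formula
\[
(G\ast f)(x)-f(x) \;=\; -(-1)^s \int_{\R^d}\phi_\gamma(z)\,\Delta_{-z}^s f(x)\,\mathrm dz.
\]
Applying Minkowski's integral inequality in $L_2(\P_X)$ and invoking the definitions (\ref{eqn:modulus_of_smoothness}) and (\ref{eqn:Besov_semi-norm}) of the modulus of smoothness and of $|\cdot|_{B_{2,\infty}^\alpha(\P_X)}$ gives
\[
\norm{G\ast f-f}_{L_2(\P_X)} \;\leq\; \int_{\R^d}\phi_\gamma(z)\,\omega_{s,L_2(\P_X)}(f,\norm{z})\,\mathrm dz \;\leq\; |f|_{B_{2,\infty}^\alpha(\P_X)}\int_{\R^d}\phi_\gamma(z)\norm{z}^\alpha\,\mathrm dz.
\]

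Finally, I evaluate the Gaussian moment using (\ref{eqn:integral_radial_function}) and the Gamma integral $\int_0^\infty e^{-br^2}r^{c-1}\mathrm dr = \tfrac12 b^{-c/2}\Gamma(c/2)$ with $b=2/\gamma^2$ and $c=\alpha+d$:
\[
\int_{\R^d}\phi_\gamma(z)\norm{z}^\alpha\,\mathrm dz \;=\; \frac{2\pi^{d/2}}{\Gamma(d/2)}\cdot\bigl(2/(\gamma^2\pi)\bigr)^{d/2}\cdot\tfrac12\,(2/\gamma^2)^{-(\alpha+d)/2}\Gamma\!\left(\tfrac{\alpha+d}{2}\right) \;=\; \frac{\Gamma((\alpha+d)/2)}{\Gamma(d/2)}\,2^{-\alpha/2}\gamma^{\alpha}.
\]
Squaring produces the claimed inequality. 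The manipulations are routine; the only mildly nonobvious point is the binomial rearrangement that turns the signed sum of shifted Gaussians back into an $s$-th order difference, and this is exactly why $G$ was defined as in (\ref{eqn:convolution_kernel}).
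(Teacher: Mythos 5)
Your proposal is correct and follows essentially the same route as the paper's proof: rescale each summand of $G$ to a common Gaussian density, use the binomial identity to recognize an $s$-th order difference (you work with $\Delta_{-z}^s$ rather than $\Delta_h^s$, a trivial sign convention), apply Minkowski's integral inequality together with the definition of $|\cdot|_{B_{2,\infty}^\alpha(\P_X)}$, and evaluate the Gaussian moment in spherical coordinates. All computations, including the final constant $2^{-\alpha}\bigl(\Gamma(\tfrac{\alpha+d}{2})/\Gamma(\tfrac{d}{2})\bigr)^2\gamma^{2\alpha}$, check out.
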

\begin{proof}
We set $s=\lfloor\alpha\rfloor+1$ and compute
\begin{align*}
G*f(x)&=\int_{\R^d} \sum_{j=1}^s \binom{s}{j} (-1)^{1-j} (j\gamma)^{-d}\left( \dfrac{2}{\pi}\right)^\frac{d}{2}\exp\left(-2(j\gamma)^{-2}\norm{y}^2\right)f(x+y)\d y\\
&=\int_{\R^d} \sum_{j=1}^s\binom{s}{j}(-1)^{1-j} \gamma^{-d}\left( \dfrac{2}{\pi}\right)^\frac{d}{2} \exp\left( -2\gamma^{-2}\norm{h}^2\right)f(x+jh)\d h.
\end{align*}
Using equation (\ref{eqn:integral_gauss_kernel}) this implies
\begin{align*}
G*f(x)-f(x) =&\,G*f(x)-\int_{\R^d}\gamma^{-d}\left( \dfrac{2}{\pi}\right)^\frac{d}{2}\exp\left( -2\gamma^{-2}\norm{h}^2\right)f(x)\d h \\
=&\int_{\R^d} \sum_{j=1}^s\binom{s}{j}(-1)^{1-j} \gamma^{-d}\left( \dfrac{2}{\pi}\right)^\frac{d}{2} \exp\left( -2\gamma^{-2}\norm{h}^2\right)f(x+jh)\d h \\
&-\int_{\R^d}\gamma^{-d}\left( \dfrac{2}{\pi}\right)^\frac{d}{2}\exp\left( -2\gamma^{-2}\norm{h}^2\right)f(x)\d h \\
=&\int_{\R^d} (-1)^{1-s}\left( \dfrac{2}{\gamma^2\pi}\right)^\frac{d}{2}\exp\left( -2\gamma^{-2}\norm{h}^2\right)\Delta_h^sf(x)\d h.
\end{align*}
With this identity we can bound our desired $L_2(\P_X)$-norm by
\begin{align*}
&\norm{G*f-f}_{L_2(\P_X)}^2 \\
=& \int_{\R^d}\left( \int_{\R^d} \left( \dfrac{2}{\gamma^2\pi}\right)^\frac{d}{2}\exp\left( -2\gamma^{-2}\norm{h}^2\right)\Delta_h^sf(x)\d h\right)^2\d\P_X(x)\\
\leq& \left(\int_{\R^d} \left(\int_{\R^d}\left( \dfrac{2}{\gamma^2\pi}\right)^d \left( \exp\left( -2\gamma^{-2}\norm{h}^2\right)\Delta_h^sf(x)\right)^2\d\P_X(x)\right)^\frac{1}{2}\d h\right)^2 \\
=& \left( \int_{\R^d} \left( \dfrac{2}{\gamma^2\pi}\right)^\frac{d}{2}\exp\left(-2\gamma^{-2}\norm{h}^2\right) \norm{\Delta_h^sf}_{L_2(\P_X)}\d h\right)^2
\end{align*}
using Minkowski's integral inequality. With our assumptions on $f$ we can further bound this by
\begin{align*}
\norm{G*f-f}_{L_2(\P_X)}^2&\leq\left( \int_{\R^d} \left( \dfrac{2}{\gamma^2\pi}\right)^\frac{d}{2}\exp\left( -\gamma^{-2}\norm{h}^2\right)\omega_{s,L_2(\P_X)}(f,\norm{h})\d h \right)^2 \\
&\leq  |f|_{B_{2,\infty}^\alpha(\P_X)}^2 \left(\dfrac{2}{\gamma^2\pi}\right)^d \left( \int_{\R^d}\exp\left( -2\gamma^{-2}\norm{h}^2\right)\norm{h}^\alpha\d h \right)^2
\end{align*}
which leaves us with computing the integral in the last step. This is done using spherical coordinates, which gives us
\begin{align*}
\int_{\R^d}\exp\left( -2\gamma^{-2}\norm{h}^2\right)\norm{h}^\alpha\d h&=\dfrac{2\pi^\frac{d}{2}}{\Gamma\left(\frac{d}{2}\right)}\int_0^\infty \exp\left( -2\gamma^{-2}r^2\right)r^{\alpha+d-1}\d r \\
&=\dfrac{2\pi^\frac{d}{2}}{\Gamma\left(\frac{d}{2}\right)} \int_0^\infty \frac{1}{2}\left(\dfrac{\gamma}{\sqrt{2}}\right)^{\alpha+d}\e^{-u}\,u^{\frac{\alpha+d}{2}-1}\,\d u \\
&= \dfrac{\pi^\frac{d}{2}}{\Gamma\left( \frac{d}{2}\right)} \left(\dfrac{\gamma}{\sqrt{2}}\right)^{\alpha+d}\Gamma\left(\dfrac{\alpha+d}{2}\right).
\end{align*}
Combining these considerations we get
\begin{align*}
\norm{G*f-f}_{L_2(\P_X)}^2 \leq |f|_{B_{2,\infty}^\alpha(\P_X)}^2 2^{-\alpha}\left(\dfrac{\Gamma\left( \frac{\alpha+d}{2}\right)}{\Gamma\left( \frac{d}{2}\right)} \right)^2 \gamma^{2\alpha}.
\end{align*}
\end{proof}
The following lemma bounds the regularization term.
\begin{lem}\label{lem:regularization_term}
For $f\in L_2(\R^d)$ we have $\norm{G*f}_{H_\gamma(X)}\leq (\gamma\sqrt{\pi})^{-\frac{d}{2}}2^s\norm{f}_{L_2(\R^d)}$.
\end{lem}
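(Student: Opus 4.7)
The plan is to decompose $G*f$ along its definition as a signed sum of convolutions with Gaussian kernels of varying widths, bound each summand separately in the $H_\gamma(X)$-norm, and combine via the triangle inequality.

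First, from the characterization of the Gaussian RKHS as the image of the convolution operator $g \mapsto K_{j\gamma} * g$ and the norm formula $\norm{h}_{H_{j\gamma}(X)} = \inf\{\norm{g}_{L_2(\R^d)} : h = K_{j\gamma} * g\}$ recalled just before Lemma \ref{lem:L2error}, for each $j \in \{1,\ldots,s\}$ the function $K_{j\gamma} * f$ lies in $H_{j\gamma}(X)$ (after restriction) with
\begin{align*}
\norm{K_{j\gamma} * f}_{H_{j\gamma}(X)} \leq \norm{f}_{L_2(\R^d)}.
\end{align*}
Scaling by $(j\gamma\sqrt{\pi})^{-d/2}$ gives the corresponding bound for each summand of $G$.

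Next, I would apply the embedding property (\ref{eqn:embedding_property}) with $\gamma_2 = j\gamma \geq \gamma = \gamma_1$, which yields
\begin{align*}
\norm{(j\gamma\sqrt{\pi})^{-d/2} K_{j\gamma} * f}_{H_\gamma(X)} \leq j^{d/2} \cdot (j\gamma\sqrt{\pi})^{-d/2} \norm{f}_{L_2(\R^d)} = (\gamma\sqrt{\pi})^{-d/2} \norm{f}_{L_2(\R^d)},
\end{align*}
since the factors of $j^{d/2}$ cancel exactly. This uniform-in-$j$ bound is the crux of the argument and the only place where the exponent $d/2$ has to match up cleanly; if the normalizing constants in the definition (\ref{eqn:convolution_kernel}) of $G$ had been chosen differently, the telescoping would not happen.

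Finally, by the triangle inequality applied to the expansion (\ref{eqn:convolution_kernel}),
\begin{align*}
\norm{G * f}_{H_\gamma(X)} \leq \sum_{j=1}^s \binom{s}{j} (\gamma\sqrt{\pi})^{-d/2} \norm{f}_{L_2(\R^d)} \leq 2^s (\gamma\sqrt{\pi})^{-d/2} \norm{f}_{L_2(\R^d)},
\end{align*}
using $\sum_{j=1}^s \binom{s}{j} = 2^s - 1 \leq 2^s$. I do not anticipate a real obstacle here; the proof is essentially bookkeeping, and the only subtle point is ensuring that the normalization $(j\gamma\sqrt{\pi})^{-d/2}$ attached to $K_{j\gamma}$ in $G$ cooperates with the factor $(j\gamma/\gamma)^{d/2}$ from the embedding to produce a $j$-independent bound.
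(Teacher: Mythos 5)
Your proposal is correct and follows essentially the same route as the paper's proof: the triangle inequality over the expansion of $G$, the bound $\norm{K_{j\gamma}*f}_{H_{j\gamma}(X)}\leq\norm{f}_{L_2(\R^d)}$ from the convolution characterization of the Gaussian RKHS norm, and the embedding (\ref{eqn:embedding_property}) whose factor $j^{d/2}$ cancels the normalization $(j\gamma\sqrt{\pi})^{-d/2}$, yielding the $j$-independent bound and the factor $2^s$. The only difference is the order of the steps (per-summand bounds first versus triangle inequality first), which is immaterial.
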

\begin{proof}
Because of the embedding property (\ref{eqn:embedding_property}) we have
\begin{align*}
\norm{G*f}_{H_\gamma(X)}&\leq \sum_{j=1}^s \binom{s}{j} (j\gamma\sqrt{\pi})^{-\frac{d}{2}}\norm{K_{j\gamma}*f}_{H_\gamma(X)}\\
&\leq  \sum_{j=1}^s\binom{s}{j} (\gamma\sqrt{\pi})^{-\frac{d}{2}}\norm{K_{j\gamma}*f}_{H_{j\gamma}(X)}\\
&\leq (\gamma\sqrt{\pi})^{-\frac{d}{2}}\norm{f}_{L_2(\R^d)}\sum_{j=1}^s\binom{s}{j}\\
&\leq (\gamma\sqrt{\pi})^{-\frac{d}{2}}2^s\norm{f}_{L_2(\R^d)}.
\end{align*}
\end{proof}
\begin{proof}[Proof of Theorem \ref{thm:ls_oracle_inequality}]
For $Y=[-M,M]$ the least-squares loss satisfies the supremum/ variance bound for the constants $B=4M^2, V=16M^2$ and $\vartheta=1$ by \cite[Example 7.3]{SteinwartChristmannSVMs}. Theorem \ref{thm:general_oracle_inequality} therefore gives us for $\lambda>0, \gamma\in (0,1)$, and $p\in(0,1/2]$
\begin{align*}
\Rk_{L,\P}(\wideparen{f}_{D,\lambda,\gamma})-\Rk_{L,\P}^*\leq &\,9(\lambda\norm{f_0}_{H_\gamma(X)}^2+\Rk_{L,\P}(f_0)-\Rk_{L,\P}^*) \\
&+ C_\P K p^{-d-1}\gamma^{-\varrho}\lambda^{-p}n^{-1}+\frac{(3456M^2+15B_0)\tau}{n}
\end{align*}
with probability $\P^n$ not less than $1-3\e^{-\tau}$. To bound the approximation error we set $f_0:=G\ast f_{L,\P}^*$, where $G$ is defined by (\ref{eqn:convolution_kernel}) for $s=\lfloor\alpha\rfloor+1$. First we determine $B_0$, i.e.~a bound on $\sup_{(x,y)\in X\times Y}|y-f_0(x)|^2$. By Young's convolution inequality we have
\begin{align*}
\norm{f_0}_{L_\infty(\R^d)}=\norm{G*f_{L,\P}^*}_{L_\infty(\R^d)}\leq \norm{f_{L,\P}^*}_{L_\infty(\R^d)}\cdot\norm{G}_{L_1(\R^d)}
\end{align*}
and by using Equation (\ref{eqn:integral_gauss_kernel}) we get
\begin{align*}
\norm{G}_{L_1(\R^d)}\leq \sum_{j=1}^s \binom{s}{j} (j\gamma\sqrt{\pi})^{-\frac{d}{2}} \norm{K_{j\gamma}}_{L_1(\R^d)}=\sum_{j=1}^s\binom{s}{j}\leq 2^s.
\end{align*}
Consequently, we get
\begin{align*}
\sup_{(x,y)\in X\times Y}|f_0(x)-y|^2\leq \left(2^s\norm{f_{L,\P}^*}_{L_\infty(\R^d)}+M\right)^2,
\end{align*}
i.e.~we can set
\begin{align*}
B_0:=\max\left\{\left(2^s\norm{f_{L,\P}^*}_{L_\infty(\R^d)}+M\right)^2,4M^2 \right\}.
\end{align*}
Using Lemma \ref{lem:L2error} and Lemma \ref{lem:regularization_term} with $s=\lfloor\alpha\rfloor+1$ we can bound the regularization term and the excess risk as stated in the theorem. To determine a bound on $C_\P$ first note that $|\Lls|_{M,1}\leq 4M$. Some calculations then show $C_\P\leq \max\{16M^2,1\}\max\{\Cdim,4M^2\}$. Finally, the results follows by setting $p=\log 2/(2\log n)\leq 1/2$.
\end{proof}
\begin{proof}[Proof of Corollary \ref{cor:LS_learning_rates}]
Theorem \ref{thm:ls_oracle_inequality} gives us
\begin{align*}
\Rk_{L,\P}(\wideparen{f}_{D,\lambda,\gamma})-\Rk_{L,\P}^* \leq C\left( \lambda\gamma^{-d}+\gamma^{2\alpha}+\lambda^{-1/\log n}\gamma^{-\varrho}n^{-1}\log^{d+1}n+\frac{\tau}{n}\right)
\end{align*}
with probability $\P^n$ not less than $1-3\e^{-\tau}$ for all $n\in\N$ and a constant $C$ only depending on $\Cdim,C_{1,2,3}$ and $M$. With the choices of $\lambda_n$ and $\gamma_n$ as stated in the corollary we get
\begin{align*}
\Rk_{L,\P}(\wideparen{f}_{D,\lambda,\gamma})-\Rk_{L,\P}^* &\leq C\left( n^{-b}n^{\frac{d}{2\alpha+\varrho}} +n^{-\frac{2\alpha}{2\alpha+\varrho}} +\e^b n^{-\frac{2\alpha}{2\alpha+\varrho}}\log^{d+1}n+\frac{\tau}{n} \right) \\
&\leq C\left(2n^{-\frac{2\alpha}{2\alpha+\varrho}}+\e^b n^{-\frac{2\alpha}{2\alpha+\varrho}}\log^{d+1}n+\frac{\tau}{n}\right)
\end{align*}
with probability $\P^n$ not less than $1-3\e^{-\tau}$ for all $n\in\N$. A substitution of $\tau$ then easily proves the assertion.
\end{proof}
\begin{proof}[Proof of Theorem \ref{thm:LS_adaptive_rates}]
We define $\gamma_n:=n^{-1/(2\alpha+\varrho)} $ and $ \lambda_n:=n^{-(2\alpha+d)/(2\alpha+\varrho)}$. By \cite[Theorem 7.2]{SteinwartChristmannSVMs}, which states an oracle inequality for empirical risk minimization over finite hypothesis sets, we have
\begin{align*}
\Rk_{L,\P}(\wideparen{f}_{D_1,\lambda_{D_2},\gamma_{D_2}})-\Rk_{L,\P}^*\leq&\, 6 \min_{(\lambda,\gamma)\in \Lambda_n\times \Gamma_n} \left( \Rk_{L,\P}(\wideparen{f}_{D_1,\lambda,\gamma})-\Rk_{L,\P}^* \right) \\
&+\frac{512M^2(\tau+\log(1+|\Lambda_n\times\Gamma_n|))}{n-m} \\
\leq &\,6\left( \Rk_{L,\P}(\wideparen{f}_{D_1,\lambda_0,\gamma_0})-\Rk_{L,\P}^* \right)\\
&+\frac{2048M^2(\tau+\log(1+|\Lambda_n\times\Gamma_n|))}{n}
\end{align*}
with probability $\P^{n-m}$ not less than $1-\e^{-\tau}$, where in the last step we picked $\gamma_0:=n^{-a}\in\Gamma_n$ and $\lambda_0:=n^{-b}\in\Lambda_n$ for values $a$ and $b$, which we will specify in a moment. An application of Theorem \ref{thm:ls_oracle_inequality} gives us
\begin{align*}
\Rk_{L,\P}(\wideparen{f}_{D_1,\lambda_0,\gamma_0})-\Rk_{L,\P}^* &\leq C\left( \lambda_0\gamma_0^{-d}+\gamma_0^{2\alpha}+b^{d+1}\gamma_0^{-\varrho}m^{-1}\log^{d+1}n+\frac{\tau}{m} \right) \\
&\leq C \left(  \lambda_0\gamma_0^{-d}+\gamma_0^{2\alpha}+2b^{d+1}\gamma_0^{-\varrho}n^{-1}\log^{d+1}n+\frac{2\tau}{n} \right)
\end{align*}
with probability $\P^m$ not less than $1-3\e^{-\tau}$. Now let $\lambda_0=n^{-d}$ and let $a\in A_n$ satisfy $1/(2\alpha+\varrho)\leq a \leq 1/(2\alpha+\varrho)+1/\log n$, which implies
\begin{align*}
\Rk_{L,\P}(\wideparen{f}_{D_1,\lambda_0,\gamma_0})-\Rk_{L,\P}^*\leq&\,C \left( \e^d \lambda_n\gamma_n^{-d}+\gamma_n^{2\alpha}+2d^{d+1}\e^\varrho\gamma_n^{-\varrho} n^{-1}\log^{d+1}n+\frac{2\tau}{n} \right) \\
=&\, C\left( \e^d n^{-\frac{2\alpha}{2\alpha+\varrho}}+n^{-\frac{2\alpha}{2\alpha+\varrho}}+2d^{d+1}\e^{\varrho}n^{-\frac{2\alpha}{2\alpha+\varrho}}\log^{d+1}n+\frac{2\tau}{n} \right)
\end{align*}
with probability $\P^m$ not less than $1-3\e^{-\tau}$. Combining these inequalities and using that $|\Lambda_n\times \Gamma_n|\in\O(\log^2 n)$ we get
\begin{align*}
\Rk_{L,\P}(\wideparen{f}_{D_1,\lambda_{D_2},\gamma_{D_2}})-\Rk_{L,\P}^*\leq c_1 \left( n^{-\frac{2\alpha}{2\alpha+\varrho}}\log^{d+1}n +\frac{\tau}{n}\right)+c_2\left( \frac{\tau}{n}+\frac{\log n}{n} \right)
\end{align*}
with probability $\P^n$ not less than $(1-\e^{-\tau})(1-3\e^{-\tau})\geq1-4\e^{-\tau}$ for all $n>2$.
\end{proof}

\subsection{Proofs related to Section \ref{sec:learning_rates_classification}}\label{sec:proofs_class}
\begin{proof}[Proof of Theorem \ref{thm:class_oracle_inequality}]
The distribution $\P$ obviously satisfies the supremum bound for $B=2$ and by \cite[Theorem 8.24]{SteinwartChristmannSVMs} the variance bound is satisfied for $V=6C_*^{q/(q+1)}$ and $\vartheta=q/(q+1)$, where $C_*$ is the constant from Assumption \ref{as:TNC}. Theorem \ref{thm:general_oracle_inequality} then gives us for $\lambda>0,\gamma\in(0,1)$, and $p\in(0,1/2]$
\begin{align*}
\Rk_{L,\P}(\wideparen{f}_{D,\lambda,\gamma})-\Rk_{L,\P}^*\leq&\, 9\left(  \lambda\norm{f_0}_{H_\gamma(X)}^2+\Rk_{L,\P}(f_0)-\Rk_{L,\P}^*\right)\\
&+C_\P K\left( \frac{p^{-d-1}\lambda^{-p}\gamma^{-\varrho}}{n} \right)^\frac{q+1}{q+2-p}+3\left(\frac{432C_*^{\frac{q}{q+1}}\tau}{n}\right)^\frac{q+1}{q+2}+\frac{15B_0\tau}{n}
\end{align*}
with probability not less than $1-3\e^{-\tau}$. In \cite[Theorem 8.18]{SteinwartChristmannSVMs} a function $f_0\in H_\gamma(X)$ with $\norm{f_0}_\infty\leq 1$ and
\begin{align*}
\lambda\norm{f_0}_{H_\gamma(X)}^2+\Rk_{L,\P}(f_0)-\Rk_{L,\P}^*\leq c_1\lambda\gamma^{-d}+c_2C_{**}\gamma^\beta,
\end{align*}
is constructed, where $c_1=3^d/\Gamma(d/2+1)$, $c_2=2^{1-\beta/2}\Gamma((\beta+d)/2)/\Gamma(d/2)$ and $C_{**}$ is the constant from Assumption \ref{as:MNE}. Further, we have $|\Lhinge|_{1,1}=1$ and since $\norm{f}_\infty\leq 1$ we can choose $B_0=2$. Simple calculations yield $C_\P\leq \max\{4,C_*^{q/(q+1)}\}\max\{2,\Cdim\}$. Finally, choosing $p=\log 2/(2\log n)\leq 1/2$ and some simple estimates proves the result.
\end{proof}
As the proof of Corollary \ref{cor:class_learning_rates} merely consists of plugging in the specified values, we will skip it at this point.

\begin{proof}[Proof of Theorem \ref{thm:class_adaptive_rates}]
Recall that our optimal choice for $\gamma_n$ and $\lambda_n$ was given by $\gamma_n=n^{-a}$ and $\lambda_n=n^{-b}$ with
\begin{align*}
a= \frac{q+1}{\beta(q+2)+\varrho(q+1)}  \quad \text{ and } \quad  b\geq \frac{(d+\beta)(q+1)}{\beta(q+2)+\varrho(q+1)}.
\end{align*}
Now note that for $\varrho\geq 1$ we have $a\leq 1$ and the choice $b= d$ is admissible. That is, by construction $\Gamma_n$ and $\Lambda_n$ cover a possible choice of $\gamma_n$ and $\lambda_n$, which achieve optimal rates. The statement can now be proven exactly as in the proof of Theorem \ref{thm:LS_adaptive_rates} by using Theorem \ref{thm:class_oracle_inequality}.
\end{proof}
%
%
\end{appendix}
\section*{Acknowledgements}
The authors thank the International Max Planck Research School for Intelligent Systems (IMPRS-IS) for supporting Thomas Hamm. Ingo Steinwart was supported by the German Research Foundation under  DFG Grant STE 1074/4-1.

\bibliography{bib}

\end{document}